\DeclareMathOperator{\supp}{supp}
\DeclareMathOperator{\diam}{diam}
\newcommand{\bE}{\ensuremath{\mathbb{E}}}
\newcommand{\bN}{\ensuremath{\mathbb{N}}}
\newcommand{\bP}{\ensuremath{\mathbb{P}}}
\newcommand{\bR}{\ensuremath{\mathbb{R}}}
\newcommand{\bZ}{\ensuremath{\mathbb{Z}}}
\newcommand{\ind}{\ensuremath{\mathbbm{1}}}
\newcommand{\cC}{\ensuremath{\mathcal{C}}}
\newcommand{\cF}{\ensuremath{\mathcal{F}}}
\newcommand{\cM}{\ensuremath{\mathcal{M}}}
\newcommand{\cP}{\ensuremath{\mathcal{P}}}
\newcommand{\cS}{\ensuremath{\mathcal{S}}}
\newcommand{\cW}{\ensuremath{\mathcal{W}}}
\newcommand{\abs}[1]{\left\vert \, #1 \, \right\vert}
\newcommand{\norm}[1]{\left\Vert \, #1 \, \right\Vert}
\newcommand{\normb}[1]{\interleave \, #1 \, \interleave}
\newcommand{\ddx}[1][1]{\ifnum#1=1 \frac{d}{dx} \else \frac{d^{#1}}{dx^{#1}} \fi}
\newcommand{\ddy}[1][1]{\ifnum#1=1 \frac{d}{dy} \else \frac{d^{#1}}{dy^{#1}} \fi}
\newcommand{\ddt}[1][1]{\ifnum#1=1 \frac{d}{dt} \else \frac{d^{#1}}{dt^{#1}} \fi}
\newtheorem{theorem}{Theorem}[section]
\newtheorem{lemma}[theorem]{Lemma}
\newtheorem{proposition}[theorem]{Proposition}
\newtheorem{corollary}[theorem]{Corollary}
\newtheorem{definition}[theorem]{Definition}
\newtheorem{example}[theorem]{Example}
\newtheorem{assumption}{Assumption}
\newenvironment{remark}[1][Remark]{\begin{trivlist}
\item[\hskip \labelsep {\bfseries #1}]}{\end{trivlist}}
\newcommand{\hP}{\widehat{\bP}}
\newcommand{\hE}{\widehat{\bE}}
\newcommand{\orho}{\overline{\rho}}
\newcommand{\hPP}{\widehat{P}}
\newcommand{\diag}{\operatorname{diag}}
\title{Constant curvature metrics for Markov chains}
\author{F. V{\"o}llering\footnote{University of Bath, f.m.vollering@bath.ac.uk}}
\begin{document}

\maketitle

\begin{abstract}
We consider metrics which are preserved under a  $p$-Wasserstein transport map, up to a possible contraction. In the case $p=1$ this corresponds to a metric which is uniformly curved in the sense of coarse Ricci curvature. We investigate the existence of such metrics in the more general sense of pseudo-metrics, where the distance between distinct points is allowed to be 0, and show the existence for general Markov chains on compact Polish spaces. Further we discuss a notion of algebraic reducibility and its relation to the existence of multiple true pseudo-metrics with constant curvature. Conversely, when the Markov chain is irreducible and the state space finite we obtain effective uniqueness of a metric with uniform curvature up to scalar multiplication and taking the $p$th root, making this a natural intrinsic distance of the Markov chain. An application is given in the form of concentration inequalities for the Markov chain.
\end{abstract}

\section{Introduction}
In \cite{OLLIVIER:09} the contraction rate of a Markov transition kernel is interpreted as coarse Ricci curvature, based on the metric structure of the underlying space and the Wasserstein distances between one-step probability distributions. Note that the curvature is no longer an intrinsic property of the underlying space, but is a consequence of how a Markov chain acts on this space. Consequently, for a given metric space different Markov transition kernels induce different curvatures, which are in a sense adapted to describe the evolution of the corresponding Markov chain. In practice this implicitly uses that the underlying metric structure of the space is well-chosen to begin with. This leads to the question what a natural metric would be for a given Markov chain to go in hand with coarse Ricci curvature.

In this paper we explore this question by looking for metrics in which the space becomes uniformly curved. We call such a metric a Wasserstein eigendistance, and the concept and existence theorems are the subject of Section \ref{section:Wasserstein}. Such a (pseudo-)metric is an intrinsic property of the Markov chain. It turns out that uniqueness of these eigendistances is related to a form of algebraic irreducibility, which is discussed in Section \ref{section:irreducibility}. In Section \ref{section:examples} we look at various examples, and in Section \ref{section:concentration} we provide as an application of Wasserstein eigendistances concentration estimates for functions which are Lipschitz with respect to an eigendistance. More discussion and  open problems are presented in Section \ref{section:discussion}. 
The last sections are then dedicated to proofs.
We do not focus on general consequences of positive coarse Ricci curvature and instead refer to \cite{OLLIVIER:09}.

\section{Wasserstein-Eigendistances}\label{section:Wasserstein}
Let $E$ be a compact Polish space, and $(X_t)_{t\in\bN}$ a Markov chain on $E$ with transition kernels $(P^x)_{x\in E}$, with corresponding law $\bP_x$ and expectation $\bE_x$. We will also write $P$ for the corresponding transition operator on functions, so that we use interchangeably 
\[ \bE_xf(X_1) = \int f(z)\,P^x(dz) = \int f \,dP^x = P^x(f) = (Pf)(x) \]
and $\bE_xf(X_n)=P^nf(x)$, $f:E\to\bR$, $n\in\bN$.

A metric is a function $\rho:E\times E\to[0,\infty)$ satisfying $\rho(x,y)=\rho(y,x)$, $\rho(x,y)\leq \rho(x,z)+\rho(z,y)$ and $\rho(x,y)=0$ iff $x=y$. There are natural generalizations which relax these assumptions slightly. An extended metric allows $\rho$ to take the value $\infty$, and a pseudo-metric allows $\rho(x,y)=0$ for $x\neq y$. Denote by $\cM_{[0,\infty]}$ the set of all measurable extended pseudo-metrics.

Denote by $D=\{(x,x):x\in E\}$ the diagonal set of $E\times E$. To work with bounded subsets of $\cM_{[0,\infty]}$, consider two functions $f_1,f_2:E\times E\setminus D\to [0,\infty]$ and denote by $\cM_{[f_1,f_2]}$ ($\cM_{(f_1,f_2]}$,...) be the subset of all $\rho\in \cM_{[0,1]}$ satisfying $f_1(x,y)\leq \rho(x,y) \leq f_2(x,y)$ ($f_1(x,y)< \rho(x,y) \leq f_2(x,y)$, ...) for all $x,y \in E\times E\setminus D$.
In particular, $\cM_{(0,\infty)}$ is the set of all proper metrics. We say $\rho\in \cM_{[0,\infty]}$ is non-degenerate if there exist $x,y\in E$ with $\rho(x,y)\in (0,\infty)$ and say $\rho$ is a true pseudo-metric if it is non-degenerate and $\rho(x,y)=0$ for some $x\neq y$.

Let $\cP=\cP(E)$ be the set of all probability measures on $E$. For $\mu,\nu\in \cP(E)$, let $\cP_{\mu,\nu}=\cP_{\mu,\nu}(E^2)$ be the set of all couplings of $\mu$ and $\nu$, that is probability measures on $E\times E$ with marginal $\mu$ and $\nu$. In the case $\mu=P^x$ and $\nu=P^y$ we simply write $\cP_{x,y}$.
We equip $\cP$ and $\cP_{\mu,\nu}$ with the total variation distance.

The $p$-Wasserstein distance of $\mu, \nu\in\cP$, $p\in[1,\infty)$, with respect to an extended pseudo-metric $\rho\in\cM_{[0,\infty]} $ is given by
\begin{align}\label{eq:Wasserstein}
\cW_{p,\rho}(\mu,\nu) = \inf_{\pi\in\cP_{\mu,\nu}}\left(\int \rho(X,Y)^p\;d\pi(X,Y)\right)^\frac1p.
\end{align}
Typically it is defined only for proper metrics and under some first moment assumption to make the integral finite. But as we work with extended pseudo-metrics we can do without any first moment assumption. The Wasserstein distance is usually interpreted for a fixed metric $\rho$. Here we instead focus how it acts as a map on $\cM$. Therefore we write
\begin{align}\label{eq:W}
W_p:\cM_{[0,\infty]}\to\cM_{[0,\infty]}, \qquad W_p(\rho)(x,y):= \cW_{p,\rho}(P^x,P^y).
\end{align}
A basic property of $W_p$ is monotonicity, i.e., for $\rho_1\leq \rho_2$ we have $W_p(\rho_1)\leq W_p(\rho_2)$, where $\leq$ is the usual partial order of real-valued functions. We also have $W_p(\rho)\leq W_{p'}(\rho)$ for $p\leq p'$. If $\rho\in \cM_{[0,1]}$, then we also have $W_{p'}(\rho)\leq W_p(\rho)^{\frac{p}{p'}}$.

\begin{assumption}\label{as:regularity}
\begin{enumerate}
\item (continuity) The map $x\mapsto P^x$ is continuous in $\cP$.
\item (non-degeneracy) 
If $(\rho_n)_{n\in\bN}\subset \cM_{[0,1]}$ is a sequence with $\lim_{n\to\infty}W_p(\rho_n)=0$, then $\lim_{n\to\infty}\rho_n =0$.
\end{enumerate}
\end{assumption}
\begin{remark}
The non-degeneracy condition does not depend on the value of $p$, since the metrics are bounded and hence $\lim_{n\to\infty}W_p(\rho_n)=0$ iff $\lim_{n\to\infty}W_1(\rho_n)=0$.
\end{remark}
\begin{lemma}
Assume $E$ is discrete and $\inf_{x\in E}P^x(x)>\frac12$. Then Assumption \ref{as:regularity} is satisfied.
\end{lemma}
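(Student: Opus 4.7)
The plan is to reduce everything to the elementary observation that if both endpoints are highly likely to be fixed, then any coupling of $P^x$ and $P^y$ must force a substantial amount of mass onto the single pair $(x,y)$, which in turn lets us compare $W_p(\rho)(x,y)$ to $\rho(x,y)$ itself. First I would note that a compact discrete Polish space is finite, so $E$ is finite and every topological subtlety disappears; in particular, part (a) of Assumption~\ref{as:regularity} is immediate because every map out of a discrete space is continuous.

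For part (b), set $\alpha := \inf_{x\in E} P^x(x) > \tfrac12$ and fix $x\neq y$. The key step is to observe that every $\pi\in\cP_{x,y}$ satisfies
\begin{align}
\pi(\{x\}\times E) = P^x(x)\geq \alpha, \qquad \pi(E\times\{y\}) = P^y(y)\geq \alpha,
\end{align}
so by inclusion–exclusion applied to the two events $\{x\}\times E$ and $E\times\{y\}$ one gets
\begin{align}
\pi(\{(x,y)\}) \geq P^x(x) + P^y(y) - 1 \geq 2\alpha - 1 > 0.
\end{align}
Keeping only this single atom in the Wasserstein integral yields the pointwise lower bound
\begin{align}
W_p(\rho)(x,y)^p \;=\; \inf_{\pi\in\cP_{x,y}} \int \rho^p\,d\pi \;\geq\; (2\alpha-1)\,\rho(x,y)^p
\end{align}
for every $\rho\in\cM_{[0,1]}$ and every $x\neq y$.

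From this the non-degeneracy condition follows directly: if $(\rho_n)\subset\cM_{[0,1]}$ satisfies $W_p(\rho_n)\to 0$, then the inequality above shows $\rho_n(x,y)\leq (2\alpha-1)^{-1/p} W_p(\rho_n)(x,y) \to 0$ for each pair $x\neq y$, and since $E\times E$ is finite this is the same as $\rho_n\to 0$ in any reasonable sense. There is no real obstacle in this argument; the only conceptual point is the inclusion–exclusion giving $\pi(\{(x,y)\})\geq 2\alpha-1$, and the hypothesis $\alpha>\tfrac12$ is used precisely to make this lower bound strictly positive.
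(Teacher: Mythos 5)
Your proof is correct and follows essentially the same route as the paper: the inclusion--exclusion bound $\pi(\{(x,y)\})\geq P^x(x)+P^y(y)-1\geq 2\alpha-1>0$ is exactly the paper's estimate $\pi(x,y)\geq 1-P^x(E\setminus x)-P^y(E\setminus y)\geq\epsilon$, and both then conclude $W_p(\rho)\geq \epsilon^{1/p}\rho$, which gives non-degeneracy. Part (a) is handled the same way, via discreteness of $E$.
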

\begin{proof}
Part a) follows directly from the fact that $E$ is discrete. For part $b)$, let $x,y\in E$, $x\neq y$ and note that for any coupling $\pi\in \cP_{x,y}$ the probability to be stationary is bounded: There exists $\epsilon>0$ independent of $x,y$ so that
\[\pi(x,y)\geq 1-P^x(E\setminus x)-P^y(E\setminus y)\geq \epsilon> 0. \]
It follows that $W_p(\rho)\geq \epsilon^\frac1p\rho$.
\end{proof}

\begin{definition}
We say that an extended pseudo-metric $\rho\in\cM_{[0,\infty]} $ is a $p$-Wasserstein eigendistance with (uniform) curvature $\kappa\in[0,1]$ if 
\[ W_p(\rho)=(1-\kappa) \rho. \]
For $\cS \in\{(0,\infty),[0,\infty], [0,1]\}$ and $\kappa\in[0,1)$ we write
\begin{align}
E_p(\kappa)\cM_\cS := \left\{ \rho\in \cM_\cS \text{ non-degenerate} : W_p(\rho)= (1-\kappa) \rho \right\}
\end{align}
for the corresponding set of all such $\rho$. For $\Lambda\subset[0,1)$ we denote by $E_p(\Lambda) \cM_\cS := \bigcup_{\kappa\in \Lambda} E_p(\kappa)\cS$ the set of all $p$-Wasserstein eigendistances with uniform curvature in $\Lambda$.
\end{definition}
The naming suggests a relation to eigenfunctions and eigenvalues, with $\rho$ being an ``eigenfunction'' to ``eigenvalue'' $1-\kappa$. However $W_p$ is not linear, making the connection not straight forward. The following theorem provides mathematical justification beyond analogy.
\begin{definition}\label{def:coupling-operator}
We call a Markov transition operator $\hPP$ on $E\times E$ a coupling operator of $P$ if $\hPP^{x,y}\in\cP_{x,y}$ for all $x,y\in E$ and $\supp(\hPP^{x,x})\subset D=\{(x,x):x\in E\}$. 

We typically write $\hP_{x,y}$ for the law of Markov chain $(X_t,Y_t)_{t\in\bN}$ generated by $\hPP$ started in $(x,y)$, and $\hE_{x,y}$ the corresponding expectation, so that $(X_t)_{t\in\bN}$ has law $\bP_x$ and $(Y_t)_{t\in\bN}$ has law $\bP_y$.
\end{definition}
\begin{theorem}\label{thm:markov-coupling}
Assume Assumption \ref{as:regularity} and let $\rho\in E_p(\kappa) \cM_{[0,1]}$. Then there exists a coupling operator $\hPP$ of $P$ 
with  $\hPP^{xy}(\rho^p)=W_p(\rho)^p(x,y)=(1-\kappa)^p \rho^p(x,y)$. In particular, $\rho^p$ is an eigenfunction of $\hPP$ to eigenvalue $(1-\kappa)^p$ and the corresponding Markov chain $(X_t,Y_t)_{t\in\bN}$ satisfies $\hE_{x,y}\rho^p(X_t, Y_t) = (1-\kappa)^{pt}\rho^p(x,y)$.
\end{theorem}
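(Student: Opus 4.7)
The overall plan is to construct the coupling operator $\hPP$ by choosing, for each pair $(x,y) \in E \times E$, an optimal coupling $\pi_{x,y}\in\cP_{x,y}$ attaining the Wasserstein infimum $\int \rho^p\,d\pi = W_p(\rho)^p(x,y) = (1-\kappa)^p\rho^p(x,y)$, and then ensuring the assignment $(x,y)\mapsto\pi_{x,y}$ is measurable. For the diagonal I set $\pi_{x,x} := (z\mapsto(z,z))_* P^x$, which is a coupling of $P^x$ with itself, supported in $D$, and automatically has cost $0 = (1-\kappa)^p\rho^p(x,x)$, satisfying Definition \ref{def:coupling-operator}. Off the diagonal, existence of an optimizer follows from the weak compactness of $\cP_{x,y}$ (a weakly closed subset of the compact Polish space $\cP(E^2)$) together with lower semi-continuity of $\pi\mapsto\int\rho^p\,d\pi$; this last property holds at least after replacing $\rho^p$ by its lsc envelope, an operation that can be checked to preserve the Wasserstein value in the bounded setting $\rho\in[0,1]$.

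For the measurable selection, Assumption \ref{as:regularity}(a) makes $(x,y)\mapsto(P^x,P^y)$ continuous in total variation, so the set-valued map $(x,y)\mapsto\cP_{x,y}$ is continuous in the Hausdorff topology on closed subsets of $\cP(E^2)$. Consequently the subset of optimal couplings forms a closed-valued measurable multifunction, and a Kuratowski--Ryll-Nardzewski or Kellerer type selection theorem yields a measurable choice. Once $\hPP^{x,y}:=\pi_{x,y}$ is thereby exhibited as a Markov kernel, the eigenfunction identity $\hPP\rho^p = (1-\kappa)^p\rho^p$ is built in by construction. The final iterated identity $\hE_{x,y}\rho^p(X_t,Y_t) = (1-\kappa)^{pt}\rho^p(x,y)$ then follows by induction, using the Markov property $\hE_{x,y}\rho^p(X_t,Y_t) = \hPP^t\rho^p(x,y)$ together with iteration of the eigenfunction equation.

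The principal obstacle is the measurable-selection step under merely measurable $\rho$: were $\rho$ continuous (or even lsc), one could appeal directly to the standard optimal-transport compactness and selection machinery, but without that one must either regularize $\rho^p$ by passing to its lsc envelope and verify invariance of the Wasserstein value, or invoke a more general analytic-selection theorem (Jankov--von Neumann). A minor subtlety is the handling of pairs $x\neq y$ with $\rho(x,y)=0$: here any coupling concentrated on $\{\rho=0\}$ is optimal, and measurability of the selection on this exceptional set must be arranged separately, though this is routine once $\rho^{-1}(\{0\})$ is identified as a measurable symmetric relation.
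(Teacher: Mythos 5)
Your overall skeleton is the same as the paper's: select, measurably in $(x,y)$, an optimal coupling realizing $W_p(\rho)^p(x,y)$ (with the diagonal coupling at $x=y$), and then iterate the eigenfunction identity via the Markov property. The paper does exactly this, defining the correspondence $(x,y)\twoheadrightarrow\{\pi\in\cP_{x,y}:\int\rho^p d\pi=W_p(\rho)^p(x,y)\}$ and applying the Kuratowski--Ryll-Nardzewski theorem (Theorem \ref{thm:selection}); the weak measurability of $(x,y)\twoheadrightarrow\cP_{x,y}$, which you assert via an unproved ``Hausdorff continuity'' claim, is in the paper a consequence of the coupling-decomposition Lemma \ref{lemma:coupling-decomposition} (through Lemma \ref{lemma:weakly-measurable}), and that is the argument you would need to actually write down.

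The genuine gap is in what you call the principal obstacle. Your two proposed remedies for merely measurable $\rho$ do not work as stated: replacing $\rho^p$ by its lower semicontinuous envelope does \emph{not} in general preserve the value of the transport problem (the envelope can be strictly smaller on a set that optimal couplings are forced to charge, so the infimum can strictly decrease; boundedness $\rho\in[0,1]$ does not help), and Jankov--von Neumann only yields a universally (analytically) measurable selection, not the Borel kernel the theorem asserts. The point you are missing is that the obstacle is vacuous under the stated hypotheses: by Theorem \ref{thm:eigen-distance} (via Lemma \ref{lemma:F-Lipschitz} and the continuity of $\alpha(x,y)=1-\abs{P^x\wedge P^y}$ from Assumption \ref{as:regularity}a), every element of $E_p(\kappa)\cM_{[0,1]}$ is continuous, and this is precisely the first line of the paper's proof. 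With $\rho$ continuous and bounded, $\pi\mapsto\int\rho^p d\pi$ is continuous, the optimizer sets are nonempty and closed, $W_p(\rho)^p$ is continuous, and the KRN selection applies; your ``minor subtlety'' about off-diagonal pairs with $\rho(x,y)=0$ also disappears, since the support condition in Definition \ref{def:coupling-operator} is only imposed on the diagonal. So the proof is repairable, but as written the regularization step is false in general and must be replaced by the continuity of $\rho$ supplied by Theorem \ref{thm:eigen-distance}.
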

In other words, the eigendistance $\rho$ is an eigenfunction to an appropriate coupling operator realizing the Wasserstein distance.
 Calling $\kappa$ the curvature instead of the eigenvalue is motivated by its geometric function. In \cite{OLLIVIER:09} the concept of a coarse Ricci curvature is introduced. For a given Markov chain and a given distance $\rho$, the coarse Ricci curvature between $x$ aned $y$ is defined by
 \begin{align}
 \kappa(x,y) = 1-\frac{W_1(\rho)(x,y)}{\rho(x,y)}.
 \end{align}
 If $\rho\in E_1(\kappa) M_{[0,1]}$, then $\kappa(x,y)=\kappa$ for all $x,y$, making $\rho$ uniformly curved with curvature $\kappa$.

Just based on the definition it is not clear that Wasserstein eigendistances exist, which leads us to the first existence theorem.
\begin{theorem}[Existence of $p$-Wasserstein eigendistances]\label{thm:eigen-distance}
Assume Assumption \ref{as:regularity}. Then, for any $p\in[1,\infty)$, $E_p([0,1))\cM_{[0,1]}$ is non-empty and its elements are continuous. 

More generally, let $\orho\in \cM_{[0,1]}$ be non-degenerate satisfying $W_p(\orho)\leq \orho$ and 
\begin{align}\label{eq:orho-inf}
\inf_{x,y: \orho(x,y)>0}\orho(x,y)>0.
\end{align}
Then 
$E_p([0,1))\cM_{[0,\orho]}\neq\emptyset$, and any element of $E_p([0,1))\cM_{[0,\orho]}$ is continuous.
\end{theorem}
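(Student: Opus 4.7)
My plan is to realize the eigendistance as an eigenvector of a nonlinear Perron-Frobenius problem for the monotone, $1$-homogeneous operator $W_p$, with most of the work going into setting up a compact convex arena on which the fixed-point argument can be run cleanly.

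First I would establish the key regularity of $W_p$. It is $1$-Lipschitz on $\cM_{[0,1]}$ in sup-norm (Minkowski's inequality on $L^p(d\pi)$), and more importantly, for any $\rho\in\cM_{[0,1]}$ the function $W_p(\rho)$ has a uniform modulus of continuity inherited from the TV-modulus of $x\mapsto P^x$: gluing a near-optimal coupling of $(P^{x'},P^{y'})$ with the optimal TV-couplings of $(P^x,P^{x'})$ and $(P^y,P^{y'})$ and using $\rho\le 1$ to control the cost perturbation gives
\[
|W_p(\rho)(x,y)-W_p(\rho)(x',y')|\le \|P^x-P^{x'}\|_{TV}^{1/p}+\|P^y-P^{y'}\|_{TV}^{1/p}.
\]
By Assumption \ref{as:regularity}(a) and compactness of $E$ this yields a universal modulus $\omega$, so $\cK^*:=\{\rho\in\cM_{[0,\orho]}:\rho\text{ has modulus }\omega\}$ is convex and compact in sup-norm by Arzela-Ascoli; suitably enlarging $\omega$, $W_p$ maps $\cK^*$ into itself continuously, monotonically, and $1$-homogeneously, and continuity of any eigendistance $\rho=W_p(\rho)/(1-\kappa)$ is then automatic.

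Next I would introduce the projective functional $s(\rho):=\sup_{(x,y):\orho(x,y)>0}\rho(x,y)/\orho(x,y)$. Condition \eqref{eq:orho-inf} provides $\delta:=\inf_{\orho>0}\orho>0$, so $s$ is sup-norm Lipschitz on $\cK^*$ and $\Sigma:=\{\rho\in\cK^*:s(\rho)=1\}$ is compact. Applying Assumption \ref{as:regularity}(b) to a constant sequence yields $W_p(\rho)=0\Rightarrow\rho=0$; combined with equicontinuity of $\cK^*$ and $\delta>0$, this upgrades to a uniform lower bound $\lambda_0:=\inf_{\rho\in\Sigma}s(W_p(\rho))>0$, since otherwise a sequence $\rho_n\in\Sigma$ with $s(W_p(\rho_n))\to 0$ would satisfy $W_p(\rho_n)\to 0$ uniformly, forcing $\rho_n\to 0$ and contradicting $s(\rho_n)=1$. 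Hence the normalized map $\Phi(\rho):=W_p(\rho)/s(W_p(\rho))$ is continuous on $\Sigma$ with $\Phi(\Sigma)\subset\Sigma$, and its fixed points are precisely the normalized eigendistances with eigenvalue $1-\kappa=s(W_p(\rho))\in[\lambda_0,1]$.

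The core obstacle is producing a fixed point of $\Phi$: since $\Sigma$ is compact but not convex, Schauder-Tychonoff does not apply directly. I would run a Collatz-Wielandt argument: define $\lambda^*:=\sup\{\lambda\ge 0:\exists\rho\in\cK^*\setminus\{0\},\;W_p(\rho)\ge\lambda\rho\}\in[\lambda_0,1]$, and for a maximizer $\rho^*\in\Sigma$ obtained via compactness, use monotonicity and $1$-homogeneity to show inductively that $W_p^{n+1}(\rho^*)\ge\lambda^* W_p^n(\rho^*)$. Then $\rho_n:=W_p^n(\rho^*)/\lambda^{*n}$ is pointwise non-decreasing, its projective normalization $\tilde\rho_n$ lies in $\Sigma$ with uniform modulus, and any sup-norm cluster point $\tilde\rho^*$ of $(\tilde\rho_n)$ satisfies $W_p(\tilde\rho^*)=\lambda^*\tilde\rho^*$ --- this uses the Fekete-type submultiplicativity $s(W_p^{n+m}(\orho))\le s(W_p^n(\orho))s(W_p^m(\orho))$ together with $\Phi$-continuity to identify the asymptotic multiplier as $\lambda^*$. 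Alternatively, one can invoke directly a Krein-Rutman-Nussbaum-type theorem for continuous, compact, monotone, $1$-homogeneous maps on a cone, whose hypotheses are precisely what has been established above. Finally, the first part of the theorem follows from the general one applied with $\orho=\ind_{\{x\ne y\}}$, which satisfies $W_p(\orho)\le\orho$ and \eqref{eq:orho-inf} trivially.
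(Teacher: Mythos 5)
Your preparatory work (the uniform TV-type modulus for $W_p(\rho)$, the compact convex set $\cK^*$ by Arzel\`a--Ascoli, the projective functional $s$, and the use of Assumption \ref{as:regularity}(b) with \eqref{eq:orho-inf} to keep normalized metrics away from $0$) matches the paper's Lemmas \ref{lemma:compact}--\ref{lemma:F-continuous}. The gap is in the existence step. Your claim $\lambda^*\in[\lambda_0,1]$ is a non sequitur: $\lambda_0=\inf_{\rho\in\Sigma}s(W_p(\rho))>0$ only says that for each normalized $\rho$ there is \emph{some} pair $(x,y)$ with $W_p(\rho)(x,y)\geq \lambda_0\,\orho(x,y)$; it does not produce a single $\rho\neq 0$ satisfying the \emph{pointwise} domination $W_p(\rho)\geq\lambda\rho$ on all of $E\times E$, which is what the Collatz--Wielandt supremum requires. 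Nothing you established rules out $\lambda^*=0$ a priori (note that $W_p(\rho)$ may vanish at pairs where $\rho>0$, e.g.\ when $P^x=P^y$ for some $x\neq y$, which Assumption \ref{as:regularity}(b) does not exclude), and the Krein--Rutman/Nussbaum-type theorems you would invoke need exactly this positivity (a positive cone spectral radius or a sub-eigenvector), so their hypotheses are \emph{not} ``precisely what has been established.'' Even granting $\lambda^*>0$ and a maximizer, the rest of the sketch does not close: the normalized iterates $\tilde\rho_n=W_p^n(\rho^*)/s(W_p^n(\rho^*))$ only have modulus $\omega/s(W_p^n(\rho^*))$, and $s(W_p^n(\rho^*))$ is not shown to be bounded below, so the claimed uniform modulus and compactness fail as stated; the Fekete submultiplicativity identifies a growth rate for $s(W_p^n(\orho))$, but you never show that a cluster point satisfies $W_p(\tilde\rho^*)=\lambda^*\tilde\rho^*$ --- for merely monotone $1$-homogeneous maps, equality at a Collatz--Wielandt maximizer genuinely requires extra structure (irreducibility/strict monotonicity) that is unavailable here, and attainment of the supremum defining $\lambda^*$ runs into the same rescaling problem.

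The obstruction you correctly identified (non-convexity of $\Sigma$) is circumvented in the paper without leaving Schauder's theorem: put the normalization \emph{inside} the argument of $W_p$, i.e.\ consider $F(\rho)=W_p\bigl(\rho/\lambda(\rho)\bigr)$ on $B_2$, the closed convex hull of $B_1=\{W_p(\rho):\rho\in\cM_{[0,\orho]},\ \lambda(\rho)=1\}$. Since $\rho\leq\lambda(\rho)\,\orho$, the rescaled metric stays below $\orho$, so $F$ maps $B_2$ into $B_1\subset B_2$, and $B_2$ is compact and convex inside your $\cK^*$; the only positivity needed is $0\notin B_2$, which follows from exactly the argument you already gave for $\lambda_0>0$ (Assumption \ref{as:regularity}(b), continuity of $\lambda$ via \eqref{eq:orho-inf}, and extremality of $0$). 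A fixed point $\rho_*$ of $F$ then satisfies $W_p(\rho_*)=\lambda(\rho_*)\rho_*$ with $\lambda(\rho_*)>0$, giving the eigendistance with curvature $\kappa_*=1-\lambda(\rho_*)\in[0,1)$. So your first two paragraphs are sound and your reduction of the first assertion to $\orho=\ind_{x\neq y}$ is fine, but the third paragraph needs to be replaced or substantially completed along these lines.
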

After establishing existence there are two natural questions. First, is there a unique eigendistance(up to scalar multiplication)? Note that uniqueness can only hold up to multiplication by positive constants $\lambda>0$, since $W_p(\lambda \rho)=\lambda W_p(\rho)$. And second, is an eigendistance a proper metric? In general the answer is no to both questions, as the following proposition shows:
\begin{proposition}\label{prop:tensor}
Let $X_t$ and $Y_t$ be two independent Markov chains, and $\rho_X$, $\rho_Y$ be $p$-Wasserstein eigendistances with the curvature $\kappa_X$, $\kappa_Y$ to $X_t$ and $Y_t$ respectively. 

Then $\rho((x,y),(u,v))=\rho_X(x,u)$ ($=\rho_Y(y,v)$) is a $p$-Wasserstein eigendistance to the product chain $(X_t,Y_t)$ with curvature $\kappa_X$ ($\kappa_Y$).

If $\kappa_X=\kappa_Y$, then 
for any $a,b\geq 0$
\[
\rho((x,y),(u,v))=\left(a\rho_X^p(x,u)+b\rho_Y^p(y,v)\right)^\frac1p
\] 
is a $p$-Wasserstein eigendistance to the product chain $(X_t,Y_t)$ with curvature $\kappa_X=\kappa_Y$.
\end{proposition}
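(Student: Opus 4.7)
The plan is to prove both claims by matching upper and lower bounds on $W_p(\rho)^p$, exploiting the product structure $P^{(x,y)} = P_X^x \otimes P_Y^y$ of the independent product chain, which allows couplings of $P^{(x,y)}$ and $P^{(u,v)}$ to be both projected onto $X$- or $Y$-marginals and built as tensor products of marginal couplings.

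For the first claim, with $\rho((x,y),(u,v)) = \rho_X(x,u)$, this $\rho$ is a pseudo-metric as the pullback of $\rho_X$ under the first-coordinate projection, and non-degeneracy is inherited from $\rho_X$. To bound $W_p(\rho) \leq (1-\kappa_X)\rho$ from above, I take an optimal coupling $\pi_X \in \cP_{P_X^x,P_X^u}$ achieving $\cW_{p,\rho_X}(P_X^x,P_X^u)^p = (1-\kappa_X)^p\rho_X^p(x,u)$ (existence by tightness on compact $E\times E$) and any coupling $\pi_Y$ of $P_Y^y$ and $P_Y^v$. Then $\pi_X \otimes \pi_Y$ is a coupling of $P^{(x,y)}$ and $P^{(u,v)}$, and since $\rho^p$ ignores the $Y$-coordinates,
\begin{equation*}
\int \rho^p \, d(\pi_X \otimes \pi_Y) = \int \rho_X^p \, d\pi_X = (1-\kappa_X)^p \rho_X^p(x,u).
\end{equation*}
Conversely, the $X$-marginal of any coupling $\pi$ of $P^{(x,y)}$ and $P^{(u,v)}$ is a coupling of $P_X^x$ and $P_X^u$, so $\int \rho^p\,d\pi \geq W_p(\rho_X)^p(x,u) = (1-\kappa_X)^p\rho_X^p(x,u)$. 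The analogous $\rho_Y$-statement is symmetric.

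For the second claim, set $\kappa := \kappa_X = \kappa_Y$. I first check that $\rho := (a\rho_X^p + b\rho_Y^p)^{1/p}$ is a pseudo-metric: symmetry and vanishing on the diagonal are immediate, and the triangle inequality follows from Minkowski's inequality in $\ell^p$ applied to the nonnegative two-vectors $(a^{1/p}\rho_X(\cdot,\cdot), b^{1/p}\rho_Y(\cdot,\cdot))$ along a three-point path. The same upper/lower bound scheme then repeats: the product $\pi_X \otimes \pi_Y$ of optimal couplings gives, by independence,
\begin{equation*}
\int \rho^p \, d(\pi_X\otimes\pi_Y) = a\int\rho_X^p\,d\pi_X + b\int\rho_Y^p\,d\pi_Y = (1-\kappa)^p\bigl(a\rho_X^p(x,u)+b\rho_Y^p(y,v)\bigr),
\end{equation*}
while marginalizing any coupling $\pi$ separately onto each coordinate block yields the matching lower bound $\int \rho^p\,d\pi \geq (1-\kappa)^p(a\rho_X^p(x,u)+b\rho_Y^p(y,v))$ by linearity of integration.

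No step presents a genuine obstacle: the independence of $X_t$ and $Y_t$ makes both projected and product couplings freely available, which is precisely the symmetry needed to pin $W_p(\rho)$ from both sides. The only small subtlety is verifying Minkowski for the combined pseudo-metric in the second part; non-degeneracy of the resulting $\rho$ is inherited from $\rho_X$ or $\rho_Y$ as soon as at least one of $a,b$ is positive.
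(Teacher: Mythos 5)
Your proof is correct and follows essentially the same route as the paper: the upper bound via the tensor product of (near-)optimal marginal couplings and the lower bound via projecting an arbitrary coupling of the product kernels onto its $X$- and $Y$-marginal couplings, using linearity for the weighted combination. The only differences are cosmetic — you spell out the equality of infima in the first claim and verify the triangle inequality via Minkowski, steps the paper leaves implicit.
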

While this proposition shows that even if $X_t$ and $Y_t$ have unique Wasserstein eigendistances which are proper metrics, the product chain does not. But a product chain is special, with a lot of independence in its structure, and in Section \ref{section:irreducibility} we will explore this further and see that a Markov chain of product form is a special case of algebraic reducibility related to the existence of proper pseudo-metric eigendistances. 

The proof of Theorem \ref{thm:eigen-distance} uses a fixed point argument it is ill-suited to actually find Wasserstein eigendistances. In the remainder of this section we have two results which give some control on the shape of Wasserstein eigenfunctions, and which in fact do not assume Assumption \ref{as:regularity}. The first looks at the case of $0$ curvature, where we can identify a special eigendistance.
\begin{theorem}\label{thm:maximal}
Assume $E_p(0)\cM_{[0,1]}\neq \emptyset$. Then there exists a maximal element $\rho_*\in E_p(0)\cM_{[0,1]}$, and $\lim_{n\to\infty}W_p^n(\ind_{x\neq y})=\rho_*$.
\end{theorem}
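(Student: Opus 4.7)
The plan is to construct $\rho_*$ as the pointwise limit of the decreasing sequence $\rho_n := W_p^n(\ind_{x\neq y})$ and to verify that this limit is a fixed point of $W_p$ which dominates every element of $E_p(0)\cM_{[0,1]}$.

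First I would check that $W_p(\ind_{x\neq y}) \leq \ind_{x\neq y}$: off the diagonal this is automatic from $\ind_{x\neq y}\leq 1$, while on the diagonal the canonical coupling $\pi = (\id,\id)_*P^x \in \cP_{x,x}$ is supported on $D$ and witnesses value $0$. Monotonicity of $W_p$ then implies $\rho_{n+1}\leq \rho_n$, so $\rho_n$ decreases pointwise to some $\rho_*\in [0,1]^{E\times E}$. An induction using the standard gluing-of-couplings construction shows that $W_p$ maps pseudo-metrics in $\cM_{[0,1]}$ to pseudo-metrics in $\cM_{[0,1]}$ (symmetry by swapping marginals, triangle inequality via Minkowski on glued couplings), so each $\rho_n$ and hence the limit $\rho_*$ is a measurable pseudo-metric in $\cM_{[0,1]}$.

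To show $W_p(\rho_*) = \rho_*$, monotonicity already gives $W_p(\rho_n)\geq W_p(\rho_*)$ and hence $\lim_n W_p(\rho_n)\geq W_p(\rho_*)$. For the opposite direction, fix any $\pi\in\cP_{x,y}$. Since $\rho_n^p\leq 1$ and $\rho_n^p\downarrow \rho_*^p$ pointwise, bounded convergence yields $\int \rho_n^p\,d\pi \to \int \rho_*^p\,d\pi$; combining this with $W_p(\rho_n)^p(x,y)\leq \int \rho_n^p\,d\pi$ and then taking the infimum over $\pi$ gives $\limsup_n W_p(\rho_n)^p(x,y)\leq W_p(\rho_*)^p(x,y)$. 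Hence $W_p(\rho_n)\to W_p(\rho_*)$ pointwise. But $W_p(\rho_n) = \rho_{n+1}\to \rho_*$, so $\rho_*$ is a fixed point.

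For maximality, let $\rho'\in E_p(0)\cM_{[0,1]}$. Then $\rho'\leq \ind_{x\neq y}$, so by monotonicity $\rho' = W_p^n(\rho')\leq W_p^n(\ind_{x\neq y}) = \rho_n$ for all $n$; passing to the limit gives $\rho'\leq \rho_*$. Since $E_p(0)\cM_{[0,1]}$ is assumed non-empty, such a non-degenerate $\rho'$ exists and forces $\rho_*$ itself to be non-degenerate, so $\rho_*\in E_p(0)\cM_{[0,1]}$ and it is the maximal element, while simultaneously the pointwise limit of $W_p^n(\ind_{x\neq y})$. The delicate point is the passage $\lim_n W_p(\rho_n)=W_p(\rho_*)$; it works cleanly only because the sequence is monotone decreasing in $[0,1]$, so bounded convergence applies at the level of couplings and we need neither compactness of optimal couplings nor any semi-continuity of $W_p$ in $\rho$.
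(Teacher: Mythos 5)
Your proof is correct, but it takes a genuinely different route from the paper's. The paper argues lattice-theoretically (one half of the Knaster--Tarski theorem): it defines $\rho_*$ as the pointwise supremum of the set $B=\{\rho\in\cM_{[0,1]}:\rho\leq W_p(\rho)\}$, uses monotonicity of $W_p$ twice to conclude $\rho_*=W_p(\rho_*)$ and maximality, and only afterwards identifies $\rho_*$ with $\lim_{n\to\infty}W_p^n(\ind_{x\neq y})$ by squeezing the decreasing iterates between $\rho_*$ and their limiting fixed point. You instead build $\rho_*$ directly as the decreasing limit of the iterates $\rho_n=W_p^n(\ind_{x\neq y})$ and verify the fixed-point property by hand: monotonicity gives $W_p(\rho_*)\leq\liminf_n W_p(\rho_n)$, while bounded convergence applied coupling-by-coupling gives $\limsup_n W_p(\rho_n)^p(x,y)\leq\int\rho_*^p\,d\pi$ for every $\pi\in\cP_{x,y}$, hence the reverse inequality after taking the infimum; maximality then follows from $\rho'=W_p^n(\rho')\leq\rho_n$ for any fixed point $\rho'\leq\ind_{x\neq y}$. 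What your approach buys: it supplies the justification for the step the paper asserts without detail (that the monotone sequence of iterates converges to a fixed point of $W_p$), and it avoids a pointwise supremum over the uncountable family $B$, where measurability of the sup would deserve comment. What the paper's Tarski argument buys: $\rho_*$ is exhibited as maximal among all sub-invariant $\rho$ with $\rho\leq W_p(\rho)$, not merely among fixed points, with no analytic input beyond monotonicity. Both arguments need the routine facts you record (that $W_p$ preserves $\cM_{[0,1]}$, including vanishing on the diagonal via the identity coupling, and that pointwise limits of pseudo-metrics are pseudo-metrics), and your non-degeneracy argument for $\rho_*$ from the assumed non-emptiness of $E_p(0)\cM_{[0,1]}$ is exactly what is needed, so your write-up stands as a valid, slightly more self-contained proof.
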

The second result assumes that we know a suitable eigenfunction of $P$.
\begin{theorem}\label{thm:eigenfunction}
Assume Assumption \ref{as:regularity}. Assume $h$ is a non-negative eigenfunction to $P$ with eigenvalue $\lambda>0$, i.e., $Ph=\lambda h$. Then there exists a $p$-Wasserstein eigendistance $\rho$ with curvature $1-\lambda^\frac1p$ satisfying
\begin{align}
\abs{h(x)-h(y)}^\frac1p \leq \rho(x,y) \leq \ind_{x\neq y}(h(x) + h(y))^\frac1p.
\end{align}
\end{theorem}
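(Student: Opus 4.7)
My plan is to produce $\rho$ as a fixed point of the operator $T := \lambda^{-1/p} W_p$ obtained by iterating a natural upper bound built directly from the eigenfunction. Define
\[
\tilde\rho(x,y) := \ind_{x\neq y}\,(h(x)+h(y))^{1/p}.
\]
This is a (bounded) pseudo-metric: the only nontrivial check is the triangle inequality, which in the generic case reduces to $(a+c)^{1/p}\leq (a+b)^{1/p}+(b+c)^{1/p}$ for $a,b,c\geq 0$, valid by sub-additivity of the concave function $t\mapsto t^{1/p}$. Using the independent coupling $P^x\otimes P^y$ and bounding $\tilde\rho^p\leq h\oplus h$,
\[
W_p(\tilde\rho)^p(x,y)\leq \int(h(X)+h(Y))\,dP^x(X)\,dP^y(Y) = \lambda(h(x)+h(y)) = \lambda\,\tilde\rho(x,y)^p
\]
for $x\neq y$, while the diagonal coupling handles $x=y$. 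Hence $W_p(\tilde\rho)\leq \lambda^{1/p}\tilde\rho$, so $T\tilde\rho\leq \tilde\rho$.

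I would then set $\rho_0:=\tilde\rho$ and $\rho_{n+1}:=T\rho_n$. Since $W_p$ preserves pseudo-metrics (the triangle inequality following from the gluing lemma), so does $T$. Monotonicity of $W_p$ together with $T\rho_0\leq \rho_0$ shows that $(\rho_n)$ is a decreasing sequence of pseudo-metrics bounded above by $\tilde\rho$. I would establish by induction the lower bound $\rho_n(x,y)\geq |h(x)-h(y)|^{1/p}$: the base case is immediate from $h\geq 0$, and for the inductive step any $\pi\in\cP_{x,y}$ satisfies
\[
\int\rho_n^p\,d\pi \geq \int|h(X)-h(Y)|\,d\pi \geq \left|\int(h(X)-h(Y))\,d\pi\right| = \lambda|h(x)-h(y)|,
\]
whence $W_p(\rho_n)^p(x,y)\geq \lambda|h(x)-h(y)|$ and the bound passes to $\rho_{n+1}$. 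Let $\rho_*:=\lim_n \rho_n$ (pointwise decreasing limit). Then $\rho_*$ is a pseudo-metric with the two-sided bound in the statement, and it is non-degenerate whenever $h$ is non-constant (the trivial case $h\equiv\text{const}$ forces $\lambda=1$ and is covered by Theorem \ref{thm:eigen-distance}).

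The main obstacle is verifying that $\rho_*$ is an actual fixed point, $W_p(\rho_*)=\lambda^{1/p}\rho_*$, rather than merely a super-solution, since monotone limits of contractive maps generally yield only one inequality. The direction $W_p(\rho_*)\leq \lambda^{1/p}\rho_*$ is immediate by monotonicity: $W_p(\rho_*)\leq W_p(\rho_n) = \lambda^{1/p}\rho_{n+1}\to \lambda^{1/p}\rho_*$. For the reverse, fix $x,y$ and $\epsilon>0$, and choose an $\epsilon$-near-optimal coupling $\pi_\epsilon\in\cP_{x,y}$ with $\int\rho_*^p\,d\pi_\epsilon\leq W_p(\rho_*)^p(x,y)+\epsilon$. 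Since $\rho_n\downarrow\rho_*$ with the uniform bound $\rho_n\leq \tilde\rho$, dominated convergence gives $\int\rho_n^p\,d\pi_\epsilon\to \int\rho_*^p\,d\pi_\epsilon$. Combined with $W_p(\rho_n)^p(x,y)\leq \int \rho_n^p\,d\pi_\epsilon$ and $W_p(\rho_n)^p = \lambda\rho_{n+1}^p\to \lambda\rho_*^p$, this yields $\lambda\rho_*^p(x,y)\leq W_p(\rho_*)^p(x,y)+\epsilon$; letting $\epsilon\to 0$ closes the fixed-point equation. Continuity of $\rho_*$ is then automatic from $\rho_*=\lambda^{-1/p}W_p(\rho_*)$ together with Assumption \ref{as:regularity}a, since $W_p$ sends bounded extended pseudo-metrics to continuous functions.
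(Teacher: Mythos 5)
Your proof is correct, and it reaches the fixed point by a genuinely different mechanism than the paper. Both arguments hinge on the same two bounding pseudo-metrics $\rho_1(x,y)=\abs{h(x)-h(y)}^{1/p}$ and $\rho_2=\tilde\rho$, and on the same two inequalities ($W_p(\rho_2)^p\leq\lambda\rho_2^p$ via the product coupling, and $W_p(\rho)^p\geq\lambda\abs{h(x)-h(y)}$ via Jensen for any $\rho\geq\rho_1$), which show that $\lambda^{-1/p}W_p$ maps the order interval $\cM_{[\rho_1,\rho_2]}$ into itself. The paper then invokes the Schauder machinery from the proof of Theorem \ref{thm:eigen-distance} (compactness of the norm-bounded metrics via Arzel\`a--Ascoli, continuity of $W_p$) restricted to this interval, whereas you run a monotone iteration $\rho_{n+1}=\lambda^{-1/p}W_p(\rho_n)$ from the top of the interval and verify directly that the decreasing pointwise limit is a fixed point, using near-optimal couplings and dominated convergence for the reverse inequality. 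Your route is more constructive (it exhibits $\rho_*=\lim_n(\lambda^{-1/p}W_p)^n\tilde\rho$, in the spirit of Theorem \ref{thm:maximal}, and in fact supplies the limit-is-a-fixed-point argument that the proof of Theorem \ref{thm:maximal} only asserts), and it does not use Assumption \ref{as:regularity}b nor compactness; the paper's route gets continuity of the eigendistance and uniform machinery for free from Lemma \ref{lemma:F-Lipschitz}. Two small points to tighten: the dominating function in your dominated-convergence step should be taken as $h(X)+h(Y)$, which is $\pi_\epsilon$-integrable because the marginals are $P^x,P^y$ (this also covers unbounded $h$, where ``uniform bound'' is not quite accurate), and you should note that each iterate $\rho_n$ is measurable (indeed continuous, by Lemma \ref{lemma:F-Lipschitz} and Assumption \ref{as:regularity}a) so that $W_p(\rho_n)$ and the limit are well defined; finally, in the constant-$h$ aside, Theorem \ref{thm:eigen-distance} does not guarantee curvature exactly $0$, but your own construction already covers that trivial case, so the deferral is unnecessary.
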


\section{Algebraic irreducibility} \label{section:irreducibility}
By Theorem \ref{thm:eigen-distance} we know that there exist Wasserstein eigendistances. These share many properties with eigenfunctions and eigenvalues, and it is tempting to think of Theorem \ref{thm:eigen-distance} as a non-linear analogue of the Perron-Frobenius theorem, which suggests that the Markov chain needs to be irreducible to allow for a unique (up to multiplication) eigendistance which is positive. As we will see, the classic irreducibility of Markov chains is not the right condition, we need a different type of irreducibility. 
In a first step we see that having a true pseudo-metric as eigendistance implies additional structure for the Markov chain.
\begin{proposition}\label{prop:markov-galaxies}
Let $\rho\in E_p([0,1))\cM_{[0,1]}$ be continuous. Define the equivalence relation $\sim$ via $x\sim y$ iff $\rho(x,y)=0$, and denote by $[x]\in E/\sim$ the equivalence class of $x\in E$. Then
$([X_t])_{t\in \bN}$ is a Markov chain. 
\end{proposition}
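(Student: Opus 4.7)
The plan is to show that the transition kernel $P$ descends to a well-defined kernel $\bar P$ on the quotient space $E/\!\sim$, after which the Markov property of $([X_t])$ follows from that of $(X_t)$ by a standard tower argument. Writing $q\colon E\to E/\!\sim$ for the quotient map, the key reduction is: if $x\sim y$, then $q_*P^x=q_*P^y$. Once this is established, $\bar P^{[x]}:=q_*P^x$ is unambiguously defined, and the rest is bookkeeping.

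For this reduction, I would fix $x\sim y$ (i.e.\ $\rho(x,y)=0$) and invoke Theorem \ref{thm:markov-coupling} --- or, if one wishes to avoid Assumption \ref{as:regularity}, simply use compactness of $\cP_{x,y}$ in the weak topology (valid since $E$ is compact and $\rho$ continuous) --- to obtain a coupling $\hPP^{x,y}\in\cP_{x,y}$ realizing the Wasserstein distance, so that
\[\int\rho(u,v)^p\,\hPP^{x,y}(du,dv)=W_p(\rho)(x,y)^p=(1-\kappa)^p\rho(x,y)^p=0.\]
Since $\rho^p\geq 0$, this forces $\hPP^{x,y}$ to be concentrated on $\{(u,v):u\sim v\}$. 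Projecting through $q\times q$ yields a probability measure on $E/\!\sim \times E/\!\sim$ supported on the diagonal, whose two marginals are $q_*P^x$ and $q_*P^y$; these must therefore agree.

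For the quotient structure: continuity of $\rho$ ensures that every equivalence class is closed, and $\rho$ descends to a genuine metric $\bar\rho$ on $E/\!\sim$. With the quotient topology this is a compact (hence Polish) metric space, and $q$ is continuous; measurability of $[x]\mapsto\bar P^{[x]}$ then follows from measurability of $x\mapsto P^x$, since for bounded measurable $f\colon E/\!\sim\,\to\bR$ we have $\bar P(f)\circ q=P(f\circ q)$. The Markov property of $([X_t])$ then drops out from the tower property: by the Markov property of $(X_t)$ and the reduction above,
\[\erwc{f([X_{n+1}])}{\sigma(X_0,\dots,X_n)}=P(f\circ q)(X_n)=\bar P(f)([X_n]),\]
and the right-hand side is $\sigma([X_0],\dots,[X_n])$-measurable, so conditioning further on this smaller $\sigma$-algebra preserves the equality. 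The only spot requiring care is the measurable/topological structure of the quotient, but this is clean because of continuity of $\rho$ combined with compactness of $E$; I do not anticipate a real technical obstacle beyond the core observation that a vanishing Wasserstein coupling must concentrate on $\{\rho=0\}$.
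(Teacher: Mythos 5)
Your proposal is correct and follows essentially the same route as the paper: the core step in both is that $W_p(\rho)(x,y)=0$ forces an optimal coupling of $P^x,P^y$ to concentrate on $\{\rho=0\}$, hence $q_*P^x=q_*P^y$ for $x\sim y$, after which the Markov property of $([X_t])$ follows by conditioning on the original chain. Your tower-property formulation and the remark that weak compactness of $\cP_{x,y}$ (rather than Theorem \ref{thm:markov-coupling}) yields the optimizer are just slightly more explicit versions of what the paper does implicitly.
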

If $\rho$ is a true pseudo-metric, then that means that $(X_t)_{t\in\bN}$ has an autonomous subchain in the form of $([X_t])_{t\in \bN}$, and the map $x\mapsto [x]$ preserves the Markovian structure of the process. In general the image of a Markov chain under some map $\phi$ is of course not a Markov chain. Here is a simple example for an autonomous subchain:
\begin{example}\label{ex:product-chain}
Let $(X_t)_{t\in \bN}$ and $(Y_t)_{t\in \bN}$ be two independent Markov chains. Then $(X_t,Y_t)_{t\geq 0}$ is a Markov chain, and the projection onto the first or second coordinate is again Markov. 
\end{example}
We formalize the concept of maps preserving the Markovian structure in the following definition:
\begin{definition}
Let $P$ be a Markov transition operator on $E$. We say a continuous map $\phi:E\to E'$ is a $P$-homomorphism if $Q^{\phi(x)}:= P^x\circ \phi^{-1}$, $x\in E$, is a well-defined Markov transition operator on a Polish space $E'$.
\end{definition}
With the concept of structure preserving homomorphisms it is natural to call $P$ algebraically irreducible when there are only trivial homomorphisms:
\begin{definition}
We say $P$ is algebraically irreducible if any $P$-homomorphism $\phi$ is either constant or bijective as a map from $E$ to $\phi(E)$, and call $P$ algebraically reducible otherwise.
\end{definition}
Example \ref{ex:product-chain} shows that algebraic irreducibility is distinct from the classic irreducibility of Markov chains, and Proposition \ref{prop:markov-galaxies} shows that if $P$ has an eigendistance which is a true pseudo-metric, then $P$ is algebraically reducible.
%============
%%%%\randnotiz{$E/\sim_{\rho}$ for $\rho$ cont. pseudo-metric is compact Polish if $E$ is compact Polish}
%============
Theorem \ref{thm:markov-coupling} states that to any eigendistance $\rho$ there is an associated(in general not unique) coupling realizing $\rho$. We will investigate the connection between $\rho$ and its coupling. Since $\rho$ is symmetric we restrict ourself to symmetric couplings.
\begin{definition}\label{def:symmetric-coupling}
Consider the equivalence relation $\sim_{sym}$ on $E\times E$
given by $(x,y)\sim_{sym}(y,x)$, and denote the elements of $(E\times E)_{sym}=E\times E/\sim_{sym}$ by $\{x,y\}$.
An event $A\subset E\times E$ is symmetric if $A=A^T$, where $A^T=\{(x,y): (y,x)\in A\}$. Symmetric events in $E\times E$ naturally map one-to-one to events in $(E\times E)_{sym}$.

We say a coupling operator $\hPP$ is symmetric if for any event $A\subset E\times E$ and $x,y\in E$ we have $\hPP^{x,y}(A) = \hPP^{y,x}(A^T)$. As a consequence, $\hPP^{\{x,y\}}_{sym}(A)= \hPP^{x,y}(\{(x,y): \{x,y\}\in A\})$, $A\subset (E\times E)_{sym}$, is a well-defined Markov transition operator on $(E\times E)_{sym}$.
\end{definition}
\begin{remark}\label{remark:symmetric}
A non-symmetric coupling operator $\hPP$ can be symmetrized by working with $\widehat{Q}^{x,y}(A) = \frac12(\hPP^{x,y}(A)+\hPP^{y,x}(A^T))$. In particular the coupling operator in Theorem \ref{thm:markov-coupling} can be assumed to be symmetric.
\end{remark}

\begin{definition}
We say a symmetric coupling operator $\hPP$ is irreducible if $\hPP_{sym}$ is irreducible outside the diagonal. That is, for any $x,y,x',y'\in E$, $x\neq y$, $x'\neq y'$, and any open neighborhood $U$ of $\{x',y'\}$ in $(E\times E)_{sym}$ there exists $n\in \bN$ so that $(\hPP^{\{x,y\}}_{sym})^n(U)>0$.
\end{definition}
Note that if $E$ is countable the above definition is the classic notion of irreducibility except for the diagonal. But since for coupling operators the diagonal has to be absorbing this is the natural extension of classical Markov chain irreducibility to coupling operators. Beside the intrinsic symmetry of metrics there is an additional reason why we work with symmetric couplings. On $E\times E$ it is possible that the diagonal naturally splits the state space into two sets, and correspondingly there are two non-diagonal irreducibility classes for a coupling $\hPP$. Working with $\hPP_{sym}$ avoids this issue.

We can now state the first main result of this section:
\begin{theorem}\label{thm:irreducibility1} Assume Assumption \ref{as:regularity}. 
The following statements are equivalent:
\begin{enumerate}
\item $P$ is algebraically irreducible.
\item For any $p\in[1,\infty)$, every element of $E_p([0,1))\cM_{[0,1]}$ of $P$ is a proper metric.
\item Every symmetric coupling operator $\hPP$ of $P$ is irreducible.
\end{enumerate}
\end{theorem}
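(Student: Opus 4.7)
My plan is to close the triangle by proving (a)$\Rightarrow$(b) directly, the two contrapositives $\neg$(a)$\Rightarrow\neg$(b) and $\neg$(a)$\Rightarrow\neg$(c) from a common construction, and finally $\neg$(c)$\Rightarrow\neg$(a), which is where the real work sits.

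The direction (a)$\Rightarrow$(b) is essentially built out of Proposition \ref{prop:markov-galaxies}. If $\rho\in E_p([0,1))\cM_{[0,1]}$ fails to be a proper metric, then by non-degeneracy it is a true pseudo-metric, so the zero-set equivalence $x\sim y\iff\rho(x,y)=0$ has a class of size at least two but not all of $E$. Theorem \ref{thm:eigen-distance} gives continuity of $\rho$, which makes the quotient map $\phi:x\mapsto [x]$ continuous; Proposition \ref{prop:markov-galaxies} then says $Q^{\phi(x)}:=P^x\circ\phi^{-1}$ is a well-defined Markov operator on $E/\sim$, i.e.\ $\phi$ is a $P$-homomorphism. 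Non-degeneracy makes $\phi$ non-constant and the existence of a non-trivial $\sim$-class makes $\phi$ non-injective, contradicting algebraic irreducibility.

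For $\neg$(a)$\Rightarrow\neg$(b) and $\neg$(a)$\Rightarrow\neg$(c) I use a non-trivial $P$-homomorphism $\phi:E\to E'$ as starting data. For (b): the induced kernel $Q^{\phi(x)}:=P^x\circ\phi^{-1}$ on the compact Polish space $\phi(E)$ inherits Assumption \ref{as:regularity} (continuity of $z\mapsto Q^z$ via compactness and a subsequence argument using continuity of $P$ and $\phi$; non-degeneracy by pulling pseudo-metrics back through $\phi$ and applying the corresponding property of $P$), so Theorem \ref{thm:eigen-distance} yields an eigendistance $\rho'\in E_p([0,1))\cM_{[0,1]}(\phi(E))$. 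Disintegrating $P^x$ along the fibers of $\phi$ lifts any coupling of $Q^{\phi(x)},Q^{\phi(y)}$ on $\phi(E)\times\phi(E)$ to one of $P^x,P^y$ on $E\times E$ without changing the $\rho'\circ(\phi\times\phi)$-cost; hence $\rho(x,y):=\rho'(\phi(x),\phi(y))$ is an eigendistance of $P$ with the same curvature, and non-injectivity of $\phi$ produces a zero of $\rho$ off the diagonal, so $\rho$ is not a proper metric. For (c): whenever $\phi(x)=\phi(y)$ the marginals $P^x\circ\phi^{-1}$ and $P^y\circ\phi^{-1}$ coincide, so a measurable-selection/disintegration argument produces a coupling of $P^x,P^y$ supported on the closed set $F:=\{(u,v):\phi(u)=\phi(v)\}$; pairing with an arbitrary measurable coupling for $\phi(x)\neq\phi(y)$ and symmetrizing as in Remark \ref{remark:symmetric} gives a symmetric coupling $\hPP$. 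Its chain, started from any off-diagonal $\{x,y\}\subset F$, stays in $F$ forever, while continuity and non-constancy of $\phi$ supply an off-diagonal $\{x',y'\}$ with $\phi(x')\neq\phi(y')$ whose open neighborhood is disjoint from $F$ — hence unreachable, so $\hPP$ is not irreducible.

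The remaining direction $\neg$(c)$\Rightarrow\neg$(a) is the main obstacle: given a non-irreducible symmetric $\hPP$, produce a non-trivial $P$-homomorphism. Fix an off-diagonal $\{x_0,y_0\}$ whose $\hPP_{sym}$-orbit closure $S\subset(E\times E)_{sym}$ misses an open neighborhood of some off-diagonal $\{x'_0,y'_0\}$; $S$ is closed and forward-invariant under $\hPP_{sym}$. Let $\sim$ be the transitive closure of the reflexive symmetric relation $xRy\iff\{x,y\}\in S\cup D$. Forward-invariance of $S$ combined with the marginal property of $\hPP$ shows that when $xRy$ the laws $P^x,P^y$ agree after projection to $E/\sim$; composing along $R$-chains extends this to all of $\sim$, making the quotient map $\phi:E\to E/\sim$ a $P$-homomorphism. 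The delicate step, and the core technical difficulty, is to rule out the trivialization obstacle that $\sim$ collapses to the universal relation, which would make $\phi$ constant and extract no contradiction. The plan here is to replace the orbit closure $S$ by a minimal closed forward-invariant subset of $(E\times E)_{sym}\setminus D$ containing $\{x_0,y_0\}$, obtained by a Zorn's-lemma argument on the poset of such sets ordered by reverse inclusion, and to use compactness of $E$ together with the symmetry of $\hPP$ to ensure minimality forces the transitive closure to remain non-trivial; routine verifications (quotient Polish, $\phi$ continuous) then yield the non-constant, non-injective $P$-homomorphism contradicting (a).
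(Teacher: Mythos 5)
Your cycle is logically sound and three of the four implications are fine: (a)$\Rightarrow$(b) is exactly the paper's argument via Proposition \ref{prop:markov-galaxies}; $\neg$(a)$\Rightarrow\neg$(c) matches the paper's construction of a coupling concentrated on $D_\phi=\{(u,v):\phi(u)=\phi(v)\}$ (the paper does the selection carefully via weak measurability and Kuratowski--Ryll-Nardzewski, which your ``measurable-selection/disintegration'' step would have to reproduce); and your extra implication $\neg$(a)$\Rightarrow\neg$(b) via the quotient chain on $\phi(E)$ and pulling back an eigendistance is plausible, though the lifting of couplings through $\phi$ by disintegration needs to be written out.

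The genuine gap is the direction you yourself flag as the main obstacle, $\neg$(c)$\Rightarrow\neg$(a). First, the ``routine verifications (quotient Polish, $\phi$ continuous)'' are not routine and in general fail: $\sim$ is the transitive closure of a closed symmetric relation, and such a closure need not be closed, need not be Borel, and the quotient $E/\sim$ need not be Hausdorff, let alone Polish; so you do not obtain a $P$-homomorphism in the sense of the paper's definition (a continuous map into a Polish space). Second, your proposed cure for the trivialization problem --- replace the orbit closure by a Zorn-minimal closed forward-invariant set and claim minimality forces the transitive closure to be non-trivial --- is unsubstantiated: minimality of an invariant set for the coupling dynamics has no evident bearing on whether chains of $R$-steps through it identify all of $E$, and no argument is given. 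Third, even forward-invariance of the topological orbit closure is delicate, because the coupling operator produced by measurable selection is only measurable, not Feller, while the irreducibility notion in Definition of irreducible couplings is topological. The paper avoids all of this by proving $\neg$(c)$\Rightarrow\neg$(b) instead: from a non-communicating class $I$ of $\hPP_{sym}$ it sets $\orho:=\ind_{x\neq y}-\ind_{\{x,y\}\in I}$, checks $W_p(\orho)\le\orho$, and invokes the general form of Theorem \ref{thm:eigen-distance} to produce an eigendistance in $\cM_{[0,\orho]}$, which is a true pseudo-metric; then $\neg$(b)$\Rightarrow\neg$(a) closes the cycle. Replacing your $\neg$(c)$\Rightarrow\neg$(a) step by this argument (or something equivalent) is necessary; as written, that direction does not go through.
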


%Just as a Wasserstein eigendistance has the specific meaning of an eigenfunction in a coupling operator also the concept of algebraic irreducibility has a meaning in terms of couplings.
%\begin{definition}
%Suppose $E$ is finite or countable and let $\hPP$ be a coupling operator of $P$, that is $\hPP^{x,y}\in\cP_{x,y}$ for all $x,y$. We say  $\hPP$
%\begin{enumerate}
%\item preserves the diagonal if $\supp(\hPP_{x,x})\subset D$, where $D=\{(x,x):x\in E\}$.
%\item is symmetric if  $\hPP^{x,y}(x',y')=\hPP^{y,x}(y',x')$ for all $x,y,x',y'\in E$.
%\item is irreducible if for all $x,x',y,y'\in E$ with $x\neq y$ and $x'\neq y'$ there exists an $n>0$ so that 
%\begin{align}\label{eq:coupling-irreducibility}
%\hPP{x,y}(\{X_n,Y_n\}=\{x',y'\})>0.
%\end{align}
%\end{enumerate}
%\end{definition}
%Note that the above definition of irreducibility is almost identical to the usual definition of irreducibility of Markov chains. There are two modifications, the first one is exempting the diagonal, as it is naturally absorbing. Second, the diagonal might split the state space into two non-communicating sets, where any path from one half to the other leads through the diagonal. Again, this is split is uninformative and we avoid it by taking the symmetric sum in \eqref{eq:coupling-irreducibility}.
%
%\begin{theorem}\label{thm:irreducibility2}
%Assume $E$ is finite and that Assumption \ref{as:regularity} is satisfied. Then the following are equivalent:
%\begin{enumerate}
%\item $P$ is algebraically irreducible.
%\item Every symmetric coupling preserving the diagonal is irreducible.
%\end{enumerate}
%\end{theorem}

We conclude this section with a result on the uniqueness of the $p$-Wasserstein eigendistance when $E$ is finite.
\begin{theorem}[Uniqueness of Wasserstein eigendistances]\label{thm:uniqueness}
Assume $E$ is finite and $P$ is algebraically irreducible. Then there exists a unique metric $\rho_*\in\cM_{(0,1]}$ with $\operatorname{diam}_{\rho_*}(E)=1$ and a $\kappa_*\in[0,1)$ so that for any $p\in[1,\infty)$
\begin{align*}
E_p([0,1))\cM_{[0,\infty)} = E_p(1-(1-\kappa_*)^\frac1p)\cM_{(0,\infty)} = \{ \lambda\rho_*^{\frac1p} : \lambda >0  \}.
\end{align*}
\end{theorem}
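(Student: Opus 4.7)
The strategy is to split the theorem into two independent pieces: uniqueness up to scaling within each fixed $p$, and a compatibility relation $\rho\mapsto\rho^{1/p}$ linking different values of $p$.

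For cross-$p$ compatibility, observe that if $\rho\in\cM_{(0,1]}$ is a $1$-Wasserstein eigendistance with curvature $\kappa$, then $\rho^{1/p}$ is again a metric (concavity of $t\mapsto t^{1/p}$ gives subadditivity), and
\[
W_p(\rho^{1/p})(x,y)^p = \inf_{\pi\in\cP_{x,y}}\int\rho\,d\pi = W_1(\rho)(x,y) = (1-\kappa)\rho(x,y),
\]
so $\rho^{1/p}$ is a $p$-Wasserstein eigendistance with curvature $1-(1-\kappa)^{1/p}$. Hence, once a canonical $1$-eigendistance $\rho_*$ with $\operatorname{diam}_{\rho_*}(E)=1$ and curvature $\kappa_*$ is fixed, the element $\rho_*^{1/p}$ furnishes a distinguished $p$-eigendistance of curvature $1-(1-\kappa_*)^{1/p}$.

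For uniqueness at fixed $p$, suppose $\rho_1,\rho_2\in E_p([0,1))\cM_{[0,1]}$ have curvatures $\kappa_1,\kappa_2$; by Theorem~\ref{thm:irreducibility1} both are strictly positive off the diagonal, so $M:=\max_{x\neq y}\rho_1^p/\rho_2^p$ and $m:=\min_{x\neq y}\rho_1^p/\rho_2^p$ are finite and positive. Using $W_1(\rho_i^p)=W_p(\rho_i)^p=(1-\kappa_i)^p\rho_i^p$ and the monotonicity of $W_1$ applied to the sandwich $m\rho_2^p\leq\rho_1^p\leq M\rho_2^p$, a short computation at the extremal pairs gives $\kappa_1=\kappa_2=:\kappa$. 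For proportionality, fix $(x_0,y_0)$ attaining $M$, set $F:=M\rho_2^p-\rho_1^p\geq 0$, and take the symmetric coupling operator $\hPP_2$ provided by Theorem~\ref{thm:markov-coupling} and Remark~\ref{remark:symmetric} applied to $\rho_2$, which satisfies $\hPP_2(\rho_2^p)=(1-\kappa)^p\rho_2^p$. Combining this with the universal inequality $\hPP_2(\rho_1^p)\geq W_p(\rho_1)^p=(1-\kappa)^p\rho_1^p$ yields
\[
\hPP_2^{x_0,y_0}(F)=M(1-\kappa)^p\rho_2^p(x_0,y_0)-\hPP_2^{x_0,y_0}(\rho_1^p)\leq 0,
\]
and since $F\geq 0$ this forces $F\equiv 0$ on the support of $\hPP_2^{x_0,y_0}$. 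Every point of this support is again an extremal pair of $\rho_1^p/\rho_2^p$ at which the same argument applies, so $\{F=0\}\setminus D$ is forward invariant under the symmetrized $\hPP_2$, and by the irreducibility outside the diagonal from Theorem~\ref{thm:irreducibility1} it must equal $(E\times E)\setminus D$. Hence $\rho_1=M^{1/p}\rho_2$.

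Applying the uniqueness step at $p=1$ and normalizing by diameter produces a unique $\rho_*\in\cM_{(0,1]}$ with $\operatorname{diam}_{\rho_*}(E)=1$; the compatibility step then places $\rho_*^{1/p}$ inside $E_p([0,1))\cM_{[0,\infty)}$, and the uniqueness step at general $p$ identifies this whole set (which coincides with $E_p([0,1))\cM_{(0,\infty)}$ by algebraic irreducibility and sits in a single curvature level) with $\{\lambda\rho_*^{1/p}:\lambda>0\}$. The main obstacle is the maximum-principle iteration: the coupling $\hPP_2$ is tuned to $\rho_2$ rather than $\rho_1$, but the asymmetry is compensated by the universal bound $\hPP_2(\rho_1^p)\geq W_p(\rho_1)^p$, and propagating equality from one extremal pair to all of $(E\times E)\setminus D$ relies crucially on the irreducibility of symmetric couplings guaranteed by Theorem~\ref{thm:irreducibility1}.
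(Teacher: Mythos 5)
Your proposal is correct and follows essentially the same route as the paper: cross-$p$ compatibility via $\rho\mapsto\rho^{\frac1p}$ and the identity $W_p(\rho^{\frac1p})^p=W_1(\rho)$, curvature equality and uniqueness at fixed $p$ via the extremal ratio, the optimal (symmetrized) coupling from Theorem~\ref{thm:markov-coupling}, the universal bound $\hPP_2(\rho_1^p)\geq W_p(\rho_1)^p$, and irreducibility from Theorem~\ref{thm:irreducibility1}. The only difference is presentational: the paper's Lemma~\ref{lemma:uniqueness} reaches a contradiction by hitting a strict-inequality pair at some time $k$, whereas you phrase the same maximum-principle mechanism as forward invariance of the equality set $\{F=0\}$ under the coupling, and you are in fact slightly more careful than the paper in explicitly symmetrizing the coupling before invoking irreducibility.
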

This result is quite remarkable. In principle there might be many different $p$-Wasserstein eigendistances for a given Markov chain for a fixed $p$, and certainly for different $p$. But as Theorem \ref{thm:uniqueness} shows, when $E$ is finite and $P$ algebraically irreducible, there is effectively a single spanning $1$-Wasserstein eigendistance $\rho_*$, and any other $p$-Wasserstein eigendistance is directly derived from it by taking the $p$-th root and scalar multiplication. This indicates that the Wasserstein eigendistance is a truly characteristic property of a Markov chain. Also, the perceived freedom of different $p$ does not matter in this context, and we might as well choose $p=1$, since $\kappa_*\geq 1-(1-\kappa_*)^\frac1p$. Also, $p=1$ matches the notion of coarse Ricci curvature from \cite{OLLIVIER:09} and in the examples computations become simpler.

\section{Examples}\label{section:examples}
In this section be will discuss various examples for Markov chains and their eigendistances. We will keep the notation that $P^x$ is the transition kernel, which of course varies from example to example.

\subsection{
\texorpdfstring{Simple symmetric random walk on $\bZ$}
{Simple symmetric random walk on Z}
}
\label{example:ssrw}
Consider a simple symmetric random walk on $\bZ$. Since $\bZ$ is not compact we cannot apply Theorem \ref{thm:eigen-distance} directly. Nevertheless, we can consider the question of eigendistances. 

A first observation is that two random walks with the difference between their starting points an odd number will never meet, since the parity is a preserved quantity under the evolution. It follows that $\rho_2(x,y):=\ind_{x-y = 1 \mod 2} \in E_1(0)\cM_{[0,1]}$.

The eigendistance $\rho_2$ is somewhat disappointing, as it does not capture the typical geometry of $\bZ$, which we associate with the Euclidean distance. In fact, $\rho_0(x,y)=\abs{x-y}$ is also an eigendistance: By Jensen's inequality, 
\[ \cW_{\rho_0}(P^x,P^y) = \inf_{\pi\in\cP_{x,y}}\int \abs{X-Y}d\pi \geq \inf_{\pi\in\cP_{x,y}}\abs{ \int X d\pi - \int Y d\pi}=\abs{x-y}.\]
For a corresponding upper bound we consider a specific coupling, where both chains perform the same jump, either to the right or to the left. Clearly this leaves the distance invariant, and hence $W_1(\rho_0)=\rho_0$.

The fact that there is a distinction between odd and even (euclidean) distances as showcased by $\rho_2$ leads to another possible eigendistance:
\begin{align}
\rho_0'(x,y) :=\begin{cases}
\abs{x-y},\quad &x-y = 0\mod 2;	\\
\infty, &x-y = 1\mod 2.
\end{cases}
\end{align}
Note how $\rho'_0$ is essentially the refinement of $\rho_2$, capturing the finer details of the evolution where $\rho_2$ is not distinguishing points. 

This provides us already with three eigendistances, $\rho_2\in E_1(0)\cM_{[0,1]}$, $\rho_0\in E_1(0)\cM_{(0,\infty)}$ and $\rho_0'\in E_1(0)\cM_{(0,\infty]}$.

\subsection{
\texorpdfstring{Lazy random walk on $\bZ$}
{Lazy random walk on Z}}
In this example we assume that the random walk is lazy, jumping to the left or right with probability $q$ and staying put with probability $r=1-2q$. 

How much does the the option of waiting change the eigendistances of the random walk? First, $\rho_0$ is still an eigendistance to curvature 0, using the same argument.
Also $\rho_2$ is still an eigendistance. 
However, since $x-y \mod 2$ is no longer a preserved quantity
the corresponding curvature is no longer 0.
\begin{lemma}\label{lemma:exampleLRW1}
$W_1(\rho_2) = (1-\kappa_2(r))\rho_2$ with $\kappa_2(r)=1-\abs{2q-r}$.
\end{lemma}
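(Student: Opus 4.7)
The plan is to recognize $W_1(\rho_2)(x,y)$ as a coupling infimum. Since $\rho_2$ is $\{0,1\}$-valued,
\[
W_1(\rho_2)(x,y) = \inf_{\pi \in \cP_{x,y}} \pi\bigl(\{(X,Y): X-Y \text{ is odd}\}\bigr).
\]
For $x-y$ even I would use the synchronous coupling (both chains take the same increment), which keeps $X-Y$ constant, so $X-Y$ remains even and the infimum is $0 = \rho_2(x,y)$. The substantive case is $x-y$ odd, where I need to show the infimum equals $|2q-r|$.

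Assume $x-y$ is odd. Writing $\Delta_X = X-x$ and $\Delta_Y = Y-y$, both in $\{-1,0,+1\}$, the new difference is $X-Y = (x-y) + (\Delta_X - \Delta_Y)$, and this is odd iff $\Delta_X - \Delta_Y$ is even, which happens exactly when both chains \emph{move} (nonzero increment) or both chains \emph{stay} (zero increment). Under the marginals, each chain independently satisfies $\prob{\text{move}} = 2q$ and $\prob{\text{stay}} = r$, so the problem reduces to coupling two $\{\text{move},\text{stay}\}$-valued random variables with these marginals and minimizing $\pi(\text{both move}) + \pi(\text{both stay})$.

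I would parameterize such a $2\times 2$ coupling by $\alpha = \pi(X \text{ moves},\, Y \text{ stays})$; symmetry of the marginals forces $\pi(X \text{ stays},\, Y \text{ moves}) = \alpha$ as well, and then $\pi(\text{both move}) = 2q-\alpha$, $\pi(\text{both stay}) = r-\alpha$. The objective becomes $2q + r - 2\alpha$, and feasibility $\pi\geq 0$ requires $\alpha \leq \min(2q,r)$, whose optimum yields $2q + r - 2\min(2q,r) = |2q-r|$. To lift this to a genuine coupling on $\{x-1,x,x+1\}\times\{y-1,y,y+1\}$, I would split each \emph{move} event into $\pm 1$ with probability $q$ each, independently on each side; the resulting joint distribution respects the marginals of $P^x$ and $P^y$. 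Combined with the even case, this gives $W_1(\rho_2) = |2q-r|\,\rho_2 = (1-\kappa_2(r))\,\rho_2$, as claimed.

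The only mildly tricky step is verifying that the infimum over abstract $2\times 2$ couplings is indeed achievable as a coupling of $P^x$ and $P^y$, but this is immediate from the splitting construction above. An essentially equivalent viewpoint, which makes this automatic, is that $\rho_2$ is the pullback of the discrete metric on $\bZ/2\bZ$ under the parity map, so that $W_1(\rho_2)(x,y)$ equals the total variation distance between the parity pushforwards of $P^x$ and $P^y$; a direct computation of this total variation gives $|2q-r|$ when the starting parities differ, and $0$ otherwise.
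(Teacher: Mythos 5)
Your proof is correct and takes essentially the same route as the paper: for $x-y$ odd it reduces the problem to maximizing the coupling probability that exactly one chain moves while the other stays, which equals $2\min(2q,r)=\kappa_2(r)$, and handles even $x-y$ by the synchronous coupling. You merely make explicit the $2\times 2$ coupling optimization, the lift back to a coupling of $P^x$ and $P^y$, and the equivalent total-variation-of-parity viewpoint, all of which the paper leaves implicit.
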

\begin{proof}
Simply note that for a coupling to minimize $\rho_2$, it must maximize the probability that $X_1-Y_1 = 0 \mod 2$. For $x-y = 1 \mod 2$, this is achieved by maximizing the probability that one chain jumps and the other does not move, and this probability is exactly given by $\kappa_2(r)$. When either both chains jump or both stay put we have $X_1-Y_1 = 1 \mod 2$, so that $W_1(\rho_2)(x,y)=0\kappa_2(r) + (1-\kappa_2(r))1 = (1-\kappa_2(r))\rho(x,y)$.
\end{proof}
Note how $\kappa_2(\frac12)=0$, which is a case we excluded in the general discussion. In fact this is an example where part b) of Assumption \ref{as:regularity} is not satisfied.

What is slightly more interesting is the family of eigendistances generalizing $\rho_2$. These eigendistances correspond to the $P$-homomorphisms $x\mapsto x \mod L$, which maps the random walk on $\bZ$ to the random walk on the discrete torus $\bZ/L\bZ$.
\begin{lemma}\label{lemma:circle}
Assume $r>q$. Then, for any $L\in \bN, L\geq 4$, 
\begin{align}\label{eq:LRW:rho_L}
\rho_L(x,y) = \sin\left(\frac{x-y \mod L}{L}\pi\right) \in E_1(\kappa_L(r))\cM_{(0,1]},
\end{align}
with $\kappa_L(r) = (1-r)\left(1-\cos\left(\frac{2}{L}\pi\right)\right)$.
%\frac23\left(1-\cos\left(\frac{2}{L}\pi\right)\right).
\end{lemma}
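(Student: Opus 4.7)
The key observation is that $\rho_L(x,y)$ depends on $x,y$ only through $d:=(x-y)\bmod L$, and that $\rho_L$ is (half of) the Euclidean chord distance between the vertices $e^{2\pi ik/L}$, $k=0,\ldots,L-1$, of the regular $L$-gon; in particular it is a well-defined (pseudo-)metric with values in $[0,1]$. Writing $\lambda_1 := r + 2q\cos(2\pi/L) = 1-\kappa_L(r)$, the task is to show that $W_1(\rho_L)(x,y) = \lambda_1\,\rho_L(x,y)$ for every $x,y\in\bZ$.

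For the lower bound I would use Kantorovich--Rubinstein duality with the family of test functions $f_\phi(z) := \tfrac12\sin(2\pi z/L - \phi)$, $\phi\in\bR$. The product-to-sum identity $\sin A - \sin B = 2\cos(\tfrac{A+B}{2})\sin(\tfrac{A-B}{2})$ immediately shows $|f_\phi(a)-f_\phi(b)|\leq |\sin(\pi(a-b)/L)| = \rho_L(a,b)$, so each $f_\phi$ is $1$-Lipschitz w.r.t.\ $\rho_L$. A direct application of the addition formula also yields $Pf_\phi = \lambda_1 f_\phi$. For fixed $x,y$ I then choose $\phi$ so that $\cos(\pi(x+y)/L-\phi)$ has the sign of $\sin(\pi(x-y)/L)$, producing
\[
W_1(\rho_L)(x,y) \;\geq\; Pf_\phi(x)-Pf_\phi(y) \;=\; \lambda_1\bigl(f_\phi(x)-f_\phi(y)\bigr) \;=\; \lambda_1\,\rho_L(x,y).
\]

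For the matching upper bound I exhibit explicit couplings of the one-step increments $(\xi_X,\xi_Y)$, splitting according to $d$. For the \emph{interior} case $d\in\{2,\ldots,L-2\}$ (nonempty exactly because $L\geq 4$), the anti-synchronised coupling $\xi_Y=-\xi_X$ sends $X_1-Y_1 = d+2\xi_X$ into $\{d-2,d,d+2\}\subset\{0,\ldots,L\}$, so no wrap-around on the cycle occurs; the sum-to-product identity $\sin((d-2)\pi/L)+\sin((d+2)\pi/L) = 2\cos(2\pi/L)\sin(d\pi/L)$ then directly gives $\bE\rho_L(X_1,Y_1) = \lambda_1\sin(d\pi/L)$.

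The boundary cases $d\in\{1,L-1\}$ are the main obstacle, because anti-synchronisation would force a wrap across zero, producing an extra $+\sin(\pi/L)$ term in place of the ``unwrapped'' $-\sin(\pi/L)$ predicted by the trig identity, and hence too large an expected distance. I resolve them using the \emph{swap} coupling $(\xi_X,\xi_Y)\in\{(-1,0),(0,1),(0,0),(1,-1)\}$ with probabilities $(q,q,r-q,q)$; the hypothesis $r>q$ makes it admissible and the marginals of both coordinates are $(q,r,q)$ as required. At $d=1$ it moves the distance to $0,0,\sin(\pi/L),\sin(3\pi/L)$ with the same probabilities, and the identity $\sin(3\pi/L) = \sin(\pi/L)(1+2\cos(2\pi/L))$ combined with $r=1-2q$ gives
\[
\bE\rho_L(X_1,Y_1) \;=\; (r-q)\sin(\pi/L) + q\sin(3\pi/L) \;=\; \sin(\pi/L)\bigl(r+2q\cos(2\pi/L)\bigr) \;=\; \lambda_1\sin(\pi/L),
\]
as desired. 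The case $d=L-1$ follows by interchanging the roles of $X$ and $Y$, using $\rho_L(x,y)=\rho_L(y,x)$.
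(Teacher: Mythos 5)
Your proof is correct, and it departs from the paper's in the step that actually matters. The upper bound is essentially the same in both: the paper also computes the expectation under the anti-synchronised coupling for $d\in\{2,\dots,L-2\}$, and for $d=\pm1$ under a coupling whose law of $X_1-Y_1 \bmod L$ is $2q\delta_0+(r-q)\delta_1+q\delta_3$, which is exactly the law your swap coupling induces. The difference is the lower bound: the paper asserts that these couplings are optimal, justified by concavity of the sine and, at $d=1$, the heuristic that the walkers should not jump over each other, whereas you prove the lower bound independently of any optimality claim via the test functions $f_\phi=\tfrac12\sin(2\pi\cdot/L-\phi)$, which are $1$-Lipschitz for $\rho_L$ by the product-to-sum identity and eigenfunctions of $P$ with eigenvalue $\lambda_1=1-\kappa_L(r)$; note only the trivial direction of Kantorovich--Rubinstein is needed, since $\int f\,dP^x-\int f\,dP^y\le\int\rho_L\,d\pi$ for every coupling $\pi$ and every $1$-Lipschitz $f$. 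This buys a fully rigorous two-sided argument where the paper leaves the optimality assertion unproved, and it exhibits $\rho_L$ as built from a genuine eigenfunction of $P$, in the spirit of Theorem \ref{thm:eigenfunction}. Two minor points: where you "choose $\phi$ so that $\cos(\pi(x+y)/L-\phi)$ has the sign of $\sin(\pi(x-y)/L)$", you need the cosine to equal $\pm1$ with the correct sign (take $\phi=\pi(x+y)/L$ or $\phi=\pi(x+y)/L+\pi$), not merely the correct sign, to reach the exact value $\lambda_1\rho_L(x,y)$; and, like the paper, you omit $d=0$, which matters only if the lemma is read on $\bZ$ rather than on the torus $\bZ/L\bZ$ (there the synchronous coupling gives $W_1(\rho_L)(x,y)=0=\lambda_1\rho_L(x,y)$).
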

\begin{proof}
First note that due to the translation invariance we can restrict ourself to the study of $x-y \mod L$ and do not have to treat all pairs of $x,y$ individually. 

To verify \eqref{eq:LRW:rho_L} we look first at the case $x-y \mod L \in\{2,...,L-2\}$. Here, since $z\mapsto \sin(z\pi/L)$ is concave for $z \in \{0,...,L\}$, an optimal coupling $\pi\in\cP_{x,y}$ for $W_1(\rho_L)(x,y)$ is given by the maximal variance coupling, where both chains jump in opposite directions. The law of $X_1-Y_1$ under this coupling is $q\delta_{x-y-2}+q\delta_{x-y+2} + r\delta_{x-y}$.
To show that indeed $\int \rho_L(X_1,Y_1) d \pi = (1-\kappa_L(r))\rho_L(x,y)$ we use the trigonometric addition formulas. Omitting $\mod L$ for ease of notation,
\begin{align}
\int \rho_L(X_1,Y_1) d\pi    %\\
&= q\sin\left(\frac{x-y+2}{L} \pi \right) + p\sin\left(\frac{x-y-2}{L} \pi \right) +  r\sin\left(\frac{x-y }{L} \pi \right)	\\
&= 2q\sin\left(\frac{x-y}{L} \pi \right) \cos\left(\frac{2}{L} \pi \right) + r\sin\left(\frac{x-y}{L} \pi \right)\\
&= \left(2q\cos\left(\frac{2}{L} \pi \right) + r\right)\rho_L(x,y).
\end{align}
With $2q=1-r$ it follows that indeed $\kappa_L(r)=(1-r)\left(1-\cos\left(\frac{2}{L}\pi\right)\right)$.

What remains is the case when $x-y=\pm 1 \mod L$, and assume w.l.o.g. $x-y= 1 \mod L$. In this case it is optimal to maximize the probability that one chain does not move, while the other jumps on top of it. This can be achieved with probability $2\min(r,q)=2q$. The remaining probability is distributed to maximize the variance. The law of $X_1-Y_1 \mod L$ is given by $2q\delta_0+p\delta_{1+2} + (r-q)\delta_1$. Optimality is again a consequence of concavity of the sine function, this time with the additional constraint that the two random walks do not jump over each other, exchanging positions without reducing distance. To show \eqref{eq:LRW:rho_L},
\begin{align}
&2q\sin\left(\frac0L \pi\right) + q\sin\left(\frac{1+2}L \pi\right) + (r-q)\sin\left(\frac1L\pi\right) \\
&= q\sin\left(\frac1L \pi\right) \cos\left(\frac2L\pi\right)+q\cos\left(\frac1L\pi\right) \sin\left(\frac2L\pi\right) + (r-q)\sin\left(\frac1L\pi\right)	\\
&= \left( q\cos\left(\frac2L\pi\right) + 2q\cos\left(\frac1L\pi\right)^2 + r-q \right)\rho_L(x,y).
\end{align}
With $2\cos(\frac1L \pi)^2 = 1 +\cos(\frac2L \pi)$ the claim follows.
\end{proof}
It may seem somewhat arbitrary to guess the specific form \eqref{eq:LRW:rho_L} of $\rho_L$. It is in fact motivated by Brownian motion on a circle, where an optimal Markovian coupling is given by anti-correlated increments, corresponding to the maximal variance principle. If the Brownian motion has generator $\frac12 \Delta$, then the difference process $X_t-Y_t$ under this coupling is itself Markovian with generator $\Delta$. Since the sine function is an eigenfunction of $\Delta$ and $\sin(0)=0$ it is a natural candidate to obtain an eigendistance for the random walk on a discrete torus.

\subsection{Independent spin flips}\label{example:ISF}
Consider $\{0,1\}^n$ as a system of $n$ spins. At each time step, every spin has a chance $q\in(0,\frac12)$ to flip, independent from the other spins.

The straight-forward guess for an eigendistance is the Hamming distance, which counts the number of disagreeing spins between $x,y\in\{0,1\}^n$. This is indeed an eigendistance, but it can be generalized: For any $a\in[0,\infty)^n$, 
\begin{align}
\rho_a(x,y):=\sum_{i=1}^n a(i)\ind_{x(i)\neq y(i)}\in E_1(2q)\cM_{[0,\infty)}.
\end{align}
This is a direct consequence of Lemma \ref{lemma:exampleLRW1} together with Proposition \ref{prop:tensor}.

\subsection{Markov chain with absorbing states}
Assume that $X_t$ is a Markov chain on a discrete $E$ which is absorbed in $A\subset E$ with $\abs{A}\geq 2$. Then, for any non-trivial partition $A_1 \dot\cup A_2 = A$, consider the hitting times $\tau_i = \inf\{t\geq 0 : X_t \in A_i\}$, $i=1,2$, with the infimum over the empty set being infinite. Assuming $\min(\tau_1,\tau_2)<\infty$ a.s., we have by standard Markov chain theory $h(x):= \bP_x(\tau_1<\tau_2)$ as a harmonic function. By Theorem \ref{thm:eigenfunction} there exists a 1-Wasserstein eigendistance $\rho$ to curvature 0, and for $x\in A_1$, $y\in E\setminus A_1$ we have $\rho(x,y) = \bP_y(\tau_1\leq \tau_2)$.

\subsection{Other examples}
Other examples include the random walk on the discrete hypercube, the discrete Ornstein-Uhlenbeck process and $k$ independent particles on the complete graph, which are example 8, 10 and 12 in \cite{OLLIVIER:09}.

\section{Concentration Estimates}\label{section:concentration}
In this section we explore the way the Markov chain acts on $\rho$-Lipschitz functions for a 1-Wasserstein eigendistance $\rho$, and on $\rho$ itself. In analogy to eigenfunctions and curvature, we will indeed observe various forms of exponential contraction when $\kappa>0$. But also the case $\kappa=0$ shows to be of interest. In a first step we observe that the transition operator $P$ acts as a contraction on the space of $\rho$-Lipschitz functions. For $f:E\to\bR$, define 
\[\norm{f}_{\rho-Lip}:=\inf\{r>0 : \abs{f(x)-f(y)}\leq r \rho(x,y)\ \forall x,y\in E\},  \]
which is a semi-norm on the space of $\rho$-Lipschitz continuous functions.
\begin{lemma}\label{lemma:Lipschitz-contraction}
Let $\rho \in E_1(\kappa) \cM_{[0,\infty)}$. Then $\norm{Pf}_{\rho-Lip}\leq (1-\kappa)\norm{f}_{\rho-Lip}$. In words, the transition operator acts as a contraction on the space of $\rho$-Lipschitz functions. In particular,
\[ \abs{P^tf(x)-P^tf(y)}\leq (1-\kappa)^t\rho(x,y),\quad x,y\in E, t\in\bN. \]
\end{lemma}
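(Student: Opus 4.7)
The plan is to prove the first inequality directly from the coupling characterisation of $W_1$ and then iterate it. Fix $x,y\in E$ and any coupling $\pi\in\cP_{x,y}$ of $P^x$ and $P^y$. Because $\pi$ has the correct marginals, we can write
\begin{align}
Pf(x)-Pf(y) = \int f(u)\,dP^x(u) - \int f(v)\,dP^y(v) = \int \bigl(f(u)-f(v)\bigr)\,d\pi(u,v),
\end{align}
and then by the definition of $\norm{f}_{\rho-Lip}$,
\begin{align}
\abs{Pf(x)-Pf(y)} \leq \int \abs{f(u)-f(v)}\,d\pi(u,v) \leq \norm{f}_{\rho-Lip}\int \rho(u,v)\,d\pi(u,v).
\end{align}

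Taking the infimum over $\pi\in\cP_{x,y}$ on the right-hand side turns the last integral into $W_1(\rho)(x,y)$, and since $\rho\in E_1(\kappa)\cM_{[0,\infty)}$ by assumption, $W_1(\rho)(x,y)=(1-\kappa)\rho(x,y)$. This gives
\begin{align}
\abs{Pf(x)-Pf(y)} \leq (1-\kappa)\norm{f}_{\rho-Lip}\,\rho(x,y),
\end{align}
which, being valid for all $x,y$, is exactly $\norm{Pf}_{\rho-Lip}\leq (1-\kappa)\norm{f}_{\rho-Lip}$.

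For the ``in particular'' statement I will iterate: applying the bound to $P^{t-1}f$ in place of $f$ and using induction on $t$ yields $\norm{P^t f}_{\rho-Lip}\leq (1-\kappa)^t\norm{f}_{\rho-Lip}$, from which the pointwise estimate follows (the formulation in the statement implicitly normalises $f$ so that $\norm{f}_{\rho-Lip}\leq 1$, otherwise a factor $\norm{f}_{\rho-Lip}$ appears on the right). There is no real obstacle here; the only subtlety is that we use the coupling-side bound for $W_1$ (so no Kantorovich--Rubinstein duality is needed), which keeps the argument valid for an arbitrary extended pseudo-metric $\rho$ without any topological or integrability assumption on $f$ beyond being $\rho$-Lipschitz.
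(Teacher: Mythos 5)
Your argument is correct and is essentially the paper's own proof: bound $\abs{Pf(x)-Pf(y)}$ through an arbitrary coupling of $P^x$ and $P^y$ using the Lipschitz property, take the infimum over couplings to obtain $\norm{f}_{\rho-Lip}W_1(\rho)(x,y)=(1-\kappa)\norm{f}_{\rho-Lip}\rho(x,y)$, and iterate for the power $P^t$. Your parenthetical remark that the displayed ``in particular'' bound presumes $\norm{f}_{\rho-Lip}\leq 1$ is also consistent with how the paper uses the lemma.
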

\begin{proof}
\begin{align}
\abs{Pf(x)-Pf(y)} &= \abs{\inf_{\pi\in \cP_{x,y}}\int f(X)-f(Y) \;\pi(d(X,Y))} \\
&\leq \norm{f}_{\rho-Lip} \cW_{1,\rho}(P^x,P^y) 
= \norm{f}_{\rho-Lip}(1-\kappa)\rho(x,y). \qedhere
\end{align}
\end{proof}
Motivated by the contraction property of $P$ we turn to a more detailed look the fluctuations of $f(X_t)$ for some $\rho$-Lipschitz continuous function $f$ and arbitrary times $t\in\bN$. The main ingredient to establish good fluctuations results is the following theorem about exponential moments.
\begin{theorem}\label{thm:exponential-moment}
Let $\rho \in E_1(\kappa) \cM_{[0,\infty)}$ and $f:E\to\bR$ $\rho$-Lipschitz continuous. Then, for any $T\in\bN$ and $x\in E$,
\begin{align}
\log \bE_x \exp\left( f(X_T)-P_Tf(x) \right) \leq 
\begin{cases} 
\sum_{n=2}^\infty \frac{\norm{f}_{\rho}^n \sigma^{(n)}_\rho }{(1-(1-\kappa)^{n})n!}, \quad &\kappa>0,\\
T\sum_{n=2}^\infty \frac{\norm{f}_{\rho}^n \sigma^{(n)}_\rho }{n!}, &\kappa=0,
\end{cases}
\end{align}
where 
\[ \sigma^{(n)}_\rho := \sup_{x\in E}\int \left(\int \rho(z,z') P^x(dz')  \right)^n P^x(dz). \]
\end{theorem}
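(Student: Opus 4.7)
My plan is to run the standard Doob--martingale / Azuma--type telescoping argument, using Lemma~\ref{lemma:Lipschitz-contraction} as the crucial contraction input and matching the peculiar shape of $\sigma^{(n)}_\rho$ via a one-step Jensen trick.

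First I introduce the Doob martingale $M_t := P^{T-t}f(X_t)$, $t=0,\dots,T$, so that $M_0 = P_T f(x)$ and $M_T = f(X_T)$. Writing the centred quantity as a telescoping sum of martingale differences,
\begin{align}
f(X_T) - P_T f(x) \;=\; \sum_{t=0}^{T-1} D_{t+1}, \qquad D_{t+1} \;=\; g_t(X_{t+1}) - (P g_t)(X_t),
\end{align}
where $g_t := P^{T-t-1} f$. By Lemma~\ref{lemma:Lipschitz-contraction} each $g_t$ is $\rho$-Lipschitz with $\norm{g_t}_{\rho-Lip} \leq (1-\kappa)^{T-t-1}\norm{f}_{\rho-Lip}$. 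By the tower property,
\begin{align}
\bE_x \exp\Bigl(\sum_{t=0}^{T-1} D_{t+1}\Bigr) \;\leq\; \prod_{t=0}^{T-1} \sup_{z\in E} \bE_z \exp\bigl(g_t(X_1) - P g_t(z)\bigr),
\end{align}
so everything reduces to a one-step bound for a general $\rho$-Lipschitz $g$.

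The core lemma I would establish is: for every $\rho$-Lipschitz $g$ and every $z\in E$,
\begin{align}
\log \bE_z \exp\bigl(g(X_1) - Pg(z)\bigr) \;\leq\; \sum_{n=2}^\infty \frac{\norm{g}_{\rho-Lip}^n\, \sigma^{(n)}_\rho}{n!}.
\end{align}
For this I let $Y,Y'$ be i.i.d.\ with law $P^z$ and write $g(Y) - Pg(z) = \bE_{Y'}[g(Y) - g(Y') \mid Y]$. Pulling absolute values inside the conditional expectation and using the Lipschitz bound gives
\begin{align}
\abs{g(Y) - Pg(z)}^n \;\leq\; \norm{g}_{\rho-Lip}^n \Bigl(\bE_{Y'}[\rho(Y,Y') \mid Y]\Bigr)^n,
\end{align}
and averaging over $Y$ produces exactly $\norm{g}_{\rho-Lip}^n \sigma^{(n)}_\rho$ (this is precisely why $\sigma^{(n)}_\rho$ is defined with the inner integral raised to the $n$-th power — matching it is the subtle step). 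The linear term in the Taylor expansion of $e^u$ vanishes because $\bE_z[g(X_1) - Pg(z)]=0$, and $\log(1+S)\leq S$ finishes the single-step estimate.

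Finally, inserting $\norm{g_t}_{\rho-Lip}^n \leq (1-\kappa)^{n(T-t-1)} \norm{f}_{\rho-Lip}^n$ and summing over $t$,
\begin{align}
\log \bE_x \exp\bigl(f(X_T) - P_T f(x)\bigr)
\;\leq\; \sum_{n=2}^\infty \frac{\norm{f}_{\rho-Lip}^n \sigma^{(n)}_\rho}{n!} \sum_{s=0}^{T-1} (1-\kappa)^{ns},
\end{align}
and the geometric sum $\sum_{s=0}^{T-1}(1-\kappa)^{ns}$ is bounded by $(1-(1-\kappa)^n)^{-1}$ when $\kappa>0$ and equals $T$ when $\kappa=0$, yielding the two claimed cases.

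The main obstacle is the one-step moment bound: naively expanding $(g(X_1)-Pg(z))^n$ produces $n$-th moments of $\rho(Y,Y')$ rather than the smaller conditional-mean quantity defining $\sigma^{(n)}_\rho$. The fix is the Jensen step of first writing the centred variable as a \emph{conditional} expectation of a Lipschitz difference before raising to the $n$-th power; everything else (martingale decomposition, tower property, geometric summation) is routine.
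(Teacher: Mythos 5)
Your proposal is correct and follows essentially the same route as the paper's proof: the same telescoping of $f(X_T)-P_Tf(x)$ into Doob-martingale increments $P^{T-t-1}f(X_{t+1})-P^{T-t}f(X_t)$, the same one-step Taylor expansion with vanishing linear term and the conditional-mean (Jensen) step that produces exactly $\sigma^{(n)}_\rho$, the contraction Lemma~\ref{lemma:Lipschitz-contraction} supplying the factor $(1-\kappa)^{n(T-t-1)}$, and the concluding geometric summation over $t$. Nothing essential is missing.
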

The terms $\sigma_\rho^{(n)}$ measure the intrinsic fluctuations of a single step transition. They can be bounded by $\diam_\rho(E)^n=\sup_{x,y\in E}\rho(x,y)^n$, but typically this is highly inefficient. A more precise bound uses
\begin{align}\label{eq:J}
 J_\rho := \sup_{x\in E} \ \operatorname*{\mathnormal{P}^\mathnormal{x}-ess\,sup}_{y \in E} \rho(x,y), 
\end{align}
the maximal jump the Markov chain can perform as measured by $\rho$. Then
\begin{align}\label{eq:sigma-J}
\sigma^{(n)}_\rho\leq 2^n J_\rho^n
\end{align}
which follows directly from $\rho(z,z')\leq \rho(z,x)+\rho(z',x) \leq 2J_\rho$.

Using \eqref{eq:sigma-J} and the exponential moment bounds in Theorem \ref{thm:exponential-moment} gives control on the fluctuation of $f(X_t)$ via an application of the exponential Markov inequality.
\begin{corollary}\label{corollary:concentration}
Let $\rho \in E_1(\kappa) \cM_{[0,\infty)}$ and assume that $J_\rho<\infty$. Then, for any $\rho$-Lipschitz function $f:E\to\bR$, $x\in E$, $T\in\bN$ and $r>0$,
\begin{align} 
&\bP_x\left( \frac{f(X_T) - \bE_x f(X_T)}{\norm{f}_{\rho-Lip}J_\rho} >  r  \right) \leq \exp\left({-\frac{ r^2}{8(\kappa(2-\kappa))^{-1} + \frac43 r}}\right), \quad &\kappa>0	;	\\ 
&\bP_x\left( \frac{f(X_T) - \bE_x f(X_T)}{\norm{f}_{\rho-Lip}J_\rho T^{\frac12}} >  r  \right) \leq \exp\left(-\frac{ r^2}{8 + \frac{4r}{3T^{\frac12}}}\right), \quad &\kappa=0.
\end{align}
\end{corollary}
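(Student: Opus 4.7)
My plan is to deduce the corollary from Theorem \ref{thm:exponential-moment} by the standard Chernoff--Bernstein machinery. First I apply Theorem \ref{thm:exponential-moment} to $\lambda f$ for an arbitrary $\lambda>0$ and use the uniform bound $\sigma_\rho^{(n)}\leq (2J_\rho)^n$ from \eqref{eq:sigma-J}. Writing $a:=\norm{f}_{\rho-Lip}J_\rho$, the series $\sum_n (\lambda \norm{f}_{\rho-Lip})^n \sigma_\rho^{(n)}/n!$ collapses to $e^{2\lambda a}-1-2\lambda a$; this is the key simplification that turns the raw moment bound into a Bernstein-ready MGF estimate.

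Next I handle the two cases. When $\kappa>0$, I use that $1-(1-\kappa)^n\geq 1-(1-\kappa)^2=\kappa(2-\kappa)$ for all $n\geq 2$ (since $(1-\kappa)^n$ is decreasing in $n$), which pulls a uniform denominator out of the series and yields
\begin{align}
\log \bE_x\exp\bigl(\lambda(f(X_T)-\bE_x f(X_T))\bigr)\leq \frac{e^{2\lambda a}-1-2\lambda a}{\kappa(2-\kappa)}.
\end{align}
When $\kappa=0$ the corresponding bound is $T(e^{2\lambda a}-1-2\lambda a)$. In both cases I apply the elementary inequality $e^u-1-u\leq \frac{u^2/2}{1-u/3}$ for $0\leq u<3$ (obtained by comparing the Taylor series of $e^u$ termwise with the geometric series $\sum u^n/(2\cdot 3^{n-2})$) with $u=2\lambda a$, producing an MGF bound of Bernstein form $\frac{\lambda^2 v/2}{1-\lambda c/3}$ with $v=4a^2/(\kappa(2-\kappa))$, $c=2a$ (respectively $v=4Ta^2$, $c=2a$).

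Finally I invoke the exponential Markov inequality $\bP_x(f(X_T)-\bE_x f(X_T)>t)\leq e^{-\lambda t}\bE_x e^{\lambda(f(X_T)-\bE_x f(X_T))}$ and optimize $\lambda$. This is the standard step from a Bernstein MGF bound of shape $\exp(\lambda^2 v/2/(1-\lambda c/3))$ to the tail bound $\exp(-t^2/(2v+2ct/3))$; choosing $t=ra$ in the first case and $t=raT^{1/2}$ in the second yields exactly the two stated inequalities after cancellation of the $a$ (respectively $aT^{1/2}$) factors. The whole argument is essentially routine once the MGF bound of Theorem \ref{thm:exponential-moment} is in Bernstein form, so the only delicate point is the uniform lower bound on $1-(1-\kappa)^n$ that allows one to factor the $\kappa$-dependence out of the sum without losing the correct asymptotic in $\kappa$; everything else is the classical Bernstein--Chernoff derivation.
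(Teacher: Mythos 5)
Your proof is correct and recovers the stated constants exactly, and it follows essentially the same route as the paper: Theorem \ref{thm:exponential-moment} applied to $\lambda f$, the bound $\sigma_\rho^{(n)}\leq (2J_\rho)^n$, the uniform estimate $1-(1-\kappa)^n\geq \kappa(2-\kappa)$ for $n\geq 2$, and the exponential Markov inequality with the usual optimization in $\lambda$. The only (immaterial) difference is the final elementary step: the paper's Lemma \ref{lemma:concentration} optimizes the Bennett-type exponent $r-(\alpha+r)\log\left(\frac{r}{\alpha}+1\right)$ and then compares it with the Bernstein form, whereas you first pass to a Bernstein-form MGF via $n!\geq 2\cdot 3^{n-2}$ and then use the standard closed-form choice of $\lambda$; both yield the same tail bounds.
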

Note how for both $\kappa=0$ and $\kappa>0$ there is concentration with a similar upper bound, the main difference being that $\kappa^{-\frac12}$ determines the scale of fluctuations when $\kappa>0$ for any $T$, while for $\kappa=0$ there is a diffusive scaling.

We now change the focus an look at the distance $\rho$ itself. By Theorem \ref{thm:markov-coupling} there is a Markovian coupling $\hP$ realizing the optimal coupling for an eigendistance $\rho$. For a pair $x,y \in E$ we look the fluctuations of $\rho(X_t,Y_t)$, where $X_t$ and $Y_t$ are the realizations of the Markov chain started in $x$ and $y$ under the coupling $\hP$. These mirror the corresponding results for $\rho$-Lipschitz functions, with a slight decrease from the exponential decay rate. 
\begin{theorem}\label{thm:distance-exponential-moment}
Let $\rho \in E_1(\kappa) \cM_{[0,\infty)}$. Then, for any $T\in\bN$, $x,y\in E$ and $\lambda\in \bR$,
\begin{align}
\log\hE_{x,y}\exp\left( \lambda\rho(X_T,Y_T) - \hE_{x,y}\lambda\rho(X_T,Y_T) \right)
&\leq \begin{cases}
2\sum_{n=2}^\infty \frac{|\lambda|^n 2^n\sigma_\rho^{(n)}}{(1-(1-\kappa)^n)n!}, \quad&\kappa>0;\\
2T\sum_{n=2}^\infty \frac{|\lambda|^n 2^n\sigma_\rho^{(n)}}{n!}, \quad&\kappa=0.
\end{cases}
\end{align}
The expectation $\hE$ corresponds to the Markov chain generated by $\hPP$ from Theorem \ref{thm:markov-coupling} corresponding to $\rho$, and $\sigma^{(n)}$ is as in Theorem \ref{thm:exponential-moment}.
\end{theorem}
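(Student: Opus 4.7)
The plan is to mirror the proof of Theorem~\ref{thm:exponential-moment} at the level of the coupling operator $\hPP$, exploiting the fact from Theorem~\ref{thm:markov-coupling} that $\rho$ is itself an eigenfunction of $\hPP$ with eigenvalue $1-\kappa$. Hence $\hPP^k \rho = (1-\kappa)^k \rho$ on $E\times E$ and in particular $\hE_{x,y}\rho(X_T,Y_T) = (1-\kappa)^T\rho(x,y)$. Consider the Doob-type decomposition with $N_t := (1-\kappa)^{T-t}\rho(X_t,Y_t)$ under the filtration $(\cF_t)_{t}$ generated by $(X_s,Y_s)_{s\le t}$. Telescoping gives
\[ \rho(X_T,Y_T)-\hE_{x,y}\rho(X_T,Y_T) \;=\; N_T-N_0 \;=\; \sum_{t=1}^T (1-\kappa)^{T-t}\bigl(\rho(X_t,Y_t) - \hPP\rho(X_{t-1},Y_{t-1})\bigr). \]
Setting $\Delta_t := \lambda(1-\kappa)^{T-t}\bigl(\rho(X_t,Y_t) - \hPP\rho(X_{t-1},Y_{t-1})\bigr)$, each $\Delta_t$ is conditionally centered given $\cF_{t-1}$, so by the tower property it suffices to bound $\log\hE_{x,y}[e^{\Delta_t}\mid\cF_{t-1}]$ deterministically and sum over $t$.

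The key technical step is the conditional moment bound. Fixing $(X_{t-1},Y_{t-1})=(u,v)$, I write
\[ \rho(X_t,Y_t)-\hPP\rho(u,v) \;=\; \int \bigl(\rho(X_t,Y_t)-\rho(u',v')\bigr)\,\hPP^{u,v}(du',dv'), \]
and apply the triangle inequality to get $|\rho(X_t,Y_t)-\rho(u',v')|\le \rho(X_t,u')+\rho(Y_t,v')$. Since the marginals of $\hPP^{u,v}$ on the two coordinates are $P^u$ and $P^v$, the integral collapses to $\tilde A(X_t)+\tilde B(Y_t)$ with $\tilde A(z)=\int\rho(z,u')P^u(du')$ and similarly $\tilde B$. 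Binomial expansion of $(\tilde A+\tilde B)^n$ combined with Hölder's inequality on each mixed term, and the very definition of $\sigma_\rho^{(n)}$, yields
\[ \hE_{x,y}\bigl[\,|\rho(X_t,Y_t)-\hPP\rho(u,v)|^n \,\big|\, \cF_{t-1}\,\bigr] \;\le\; 2^n\,\sigma_\rho^{(n)}. \]
This is the $\hPP$-analogue of the single-chain estimate and explains the additional factor $2^n$ in the statement.

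With this in hand the rest is standard. Using the power series $e^{\Delta_t}=1+\Delta_t+\sum_{n\ge 2}\Delta_t^n/n!$, taking conditional expectation (the linear term vanishes by centering), using $\Delta_t^n\le |\Delta_t|^n$ for odd $n$, and applying $\log(1+x)\le x$ gives
\[ \log\hE_{x,y}[e^{\Delta_t}\mid\cF_{t-1}] \;\le\; \sum_{n=2}^{\infty} \frac{|\lambda|^n (1-\kappa)^{n(T-t)} 2^n \sigma_\rho^{(n)}}{n!}. \]
Summing over $t=1,\ldots,T$ replaces $(1-\kappa)^{n(T-t)}$ by the geometric sum $\sum_{t=1}^T (1-\kappa)^{n(T-t)}$, which is bounded by $1/(1-(1-\kappa)^n)$ when $\kappa>0$ and equals $T$ when $\kappa=0$. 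A mildly lossy book-keeping step at the end (handling positive and negative $\lambda$, or bounding centered moments by raw moments via an extra triangle inequality) accounts for the leading factor $2$ in the stated bound.

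The main obstacle is genuinely the moment bound. Unlike in Theorem~\ref{thm:exponential-moment}, $\rho$ lives on the product space $E\times E$, so a ``Lipschitz in one variable'' argument is insufficient; the twofold use of the triangle inequality to split into the $X$- and $Y$-marginal fluctuations is what legitimately introduces $2^n$ in place of $1^n$ relative to the single-chain case. Everything else is a direct transcription of the proof of Theorem~\ref{thm:exponential-moment} with $P$ replaced by $\hPP$ and the arbitrary Lipschitz function $f$ replaced by the specific eigenfunction $\rho$ supplied by Theorem~\ref{thm:markov-coupling}.
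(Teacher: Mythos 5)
Your proof is correct and takes essentially the same route as the paper: a one-step centered exponential-moment bound obtained by splitting $\abs{\rho(X_1,Y_1)-\hPP\rho(u,v)}$ via the triangle inequality into the two marginal fluctuations (the source of the $2^n$), combined with the eigenfunction identity $\hPP\rho=(1-\kappa)\rho$ and the Markov property to telescope over time and sum the geometric series, exactly as in the paper's backward iteration. Your binomial/H\"older bound $2^n\sigma_\rho^{(n)}$ is in fact slightly sharper than the paper's $2^{n+1}\sigma_\rho^{(n)}$, so the leading factor $2$ in the statement is simply absorbed as slack in your version.
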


\begin{corollary}
\label{corollary:distance-concentration}
Let $\rho \in E_1(\kappa) \cM_{(0,\infty)}$ and assume that $J_\rho<\infty$. Then, for any $x,y\in E$, $T\in\bN$ and $r>0$,
\begin{align} 
&\hP_{x,y}\left( \abs{\rho(X_T,Y_T)-(1-\kappa)^T\rho(x,y)}\geq J_\rho r  \right)\leq 2\exp\left(\frac{- r^2}{64(\kappa(2-\kappa))^{-1} + \frac83 r}\right)
\end{align}
when $\kappa>0$. For $\kappa=0$ the corresponding bound is
\begin{align}
&\hP_{x,y}\left( \abs{\rho(X_T,Y_T)-(1-\kappa)^T\rho(x,y)}\geq J_\rho r \right)\leq 2\exp\left(\frac{- r^2}{64T + \frac83 r}\right).
\end{align}
\end{corollary}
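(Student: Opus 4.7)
\textbf{Proof proposal for Corollary \ref{corollary:distance-concentration}.}
The plan is the standard Chernoff/Bernstein route: combine the exponential moment bound of Theorem \ref{thm:distance-exponential-moment} with the exponential Markov inequality, then optimize over the free parameter $\lambda$. By Theorem \ref{thm:markov-coupling} one has $\hE_{x,y}\rho(X_T,Y_T)=(1-\kappa)^T\rho(x,y)$, so the centering by $\hE_{x,y}\rho(X_T,Y_T)$ that appears inside the exponential moment bound coincides with the centering by $(1-\kappa)^T\rho(x,y)$ that appears in the claimed statement.

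First I would insert the crude bound $\sigma_\rho^{(n)}\leq 2^n J_\rho^n$ from \eqref{eq:sigma-J} into Theorem \ref{thm:distance-exponential-moment}. In the case $\kappa>0$, the factor $(1-(1-\kappa)^n)^{-1}$ for $n\geq 2$ can be uniformly dominated by its value at $n=2$, namely $1/(\kappa(2-\kappa))$, since $n\mapsto (1-\kappa)^n$ is decreasing on $\bN$. This collapses the exponential moment estimate to
\[
\log\hE_{x,y}\exp\!\bigl(\lambda(\rho(X_T,Y_T)-\hE_{x,y}\rho(X_T,Y_T))\bigr) \leq C_\kappa\sum_{n\geq 2}\frac{(4|\lambda|J_\rho)^n}{n!},
\]
with $C_\kappa = 2/(\kappa(2-\kappa))$ for $\kappa>0$ and $C_\kappa = 2T$ for $\kappa=0$.

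Next I would apply the classical Bernstein series estimate $\sum_{n\geq 2}x^n/n!\leq (x^2/2)/(1-x/3)$, valid for $0\leq x<3$, with $x=4|\lambda|J_\rho$. This puts the bound into Bernstein form $\lambda^2 v/(2(1-c\lambda))$ with $v=16C_\kappa J_\rho^2$ and $c=4J_\rho/3$, valid on $\lambda\in[0,3/(4J_\rho))$. Exponential Markov combined with the standard Bernstein minimization (optimizing $-\lambda J_\rho r+\lambda^2 v/(2(1-c\lambda))$ at $\lambda^*=J_\rho r/(v+cJ_\rho r)$) then yields the one-sided tail bound $\exp(-r^2/(32C_\kappa+8r/3))$, which is precisely $\exp(-r^2/(64(\kappa(2-\kappa))^{-1}+8r/3))$ for $\kappa>0$ and $\exp(-r^2/(64T+8r/3))$ for $\kappa=0$.

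Finally, since Theorem \ref{thm:distance-exponential-moment} is stated for \emph{every} $\lambda\in\bR$, applying the same argument with $-\lambda$ gives the matching bound on the lower tail, and a union bound absorbs the factor $2$ in front. No step presents a real obstacle; the only care is bookkeeping the constants through the Bernstein optimization and verifying that the replacement $1-(1-\kappa)^n\mapsto\kappa(2-\kappa)$, which is exact at $n=2$ and only improves for $n\geq 3$, does not inflate the constants beyond what is claimed.
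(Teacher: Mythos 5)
Your proposal is correct and follows essentially the paper's route: the exponential moment bound of Theorem \ref{thm:distance-exponential-moment} combined with $\sigma_\rho^{(n)}\leq (2J_\rho)^n$ from \eqref{eq:sigma-J}, the uniform bound $1-(1-\kappa)^n\geq \kappa(2-\kappa)$ for $n\geq 2$, a Chernoff argument in $\lambda$, and a union bound over the two tails, landing exactly on the stated constants (your $32C_\kappa$ is the paper's $64(\kappa(2-\kappa))^{-1}$, resp.\ $64T$). The only deviation is internal to the Chernoff step: the paper channels it through its Lemma \ref{lemma:concentration} (exact Bennett-type optimization giving the exponent $r-(\alpha+r)\log(r/\alpha+1)$, then a derivative comparison to reach Bernstein form, followed by the rescaling $r'=4r$), whereas you bound the series by $\frac{x^2/2}{1-x/3}$ for $x<3$ and perform the standard Bernstein minimization at $\lambda^*$ directly; the two are equivalent and yield the identical tail, so nothing is lost, and your version is slightly more self-contained. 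Your explicit use of Theorem \ref{thm:markov-coupling} to identify $\hE_{x,y}\rho(X_T,Y_T)=(1-\kappa)^T\rho(x,y)$ correctly supplies the recentering that the paper's proof leaves implicit.
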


\section{Discussion and open problems}\label{section:discussion}
\subsection{Natural distance}
It is in general not clear what a natural notion of distance is. This can depend on context, for example for $E$ a discrete graph the graph distance is a natural distance in many contexts. However, one can also consider the graph as an electrical network and consider the resistance metric, which is a different but also natural choice, and it is related to the behaviour of a random walk on the graph via cover times. On the discrete circle $\bZ/L\bZ$, the resistance metric is given by $d_R(x,y) = \frac{(x-y)(L-(x-y)) \mod L}{L}$, which is (up to scaling) the second order approximation on the Wasserstein distance found in Lemma \ref{lemma:circle}. On the other hand, by Proposition \ref{prop:tensor} the Euclidian distance on $\bZ$ extends to the $\ell_1$ distance as a 1-Wasserstein eigendistance for the random walk on $\bZ^n$, which is very different from the resistance metric.

The notion of a Wasserstein eigendistance $\rho$ provides another notion of natural distance, related to the ease of distinguishing $\bP_x(X_n\in \cdot)$ for different $x$ and $n$. It is well-adapted to obtain concentration estimates for $\rho$-Lipschitz functions, and other consequences from positive curvature discussed in \cite{OLLIVIER:09} follow as well. It is an intrinsic distance to the Markov chain, in the sense that under the conditions of Theorem \ref{thm:uniqueness} it is unique and does not require an a priori metric structure. However, due to its indirect construction via a fixed point argument it is not clear what kind of properties of the Markov chain are encoded in $\rho$.

We should also mention the idea of avoiding the possibly discrete nature of $E$ by working on $\cP(E)$. The space of probability measures is a continuous space, and in \cite{ERBAR:MAAS:12},\cite{FATHI:MAAS:16} a natural metric is constructed on $\cP$ via the dynamics of a reversible Markov chain. Also here motivations comes from notions of geometry and curvature in continuous settings, and positive curvature is related to the contraction rate of the semi-group with respect to relative entropy. It is not directly clear how this is related to Wasserstein eigendistances, but both types of distances are intrinsic to the Markov chain, so there might be a connection. 

\subsection{Non-compact spaces and negative curvature}
In this paper we restrict ourself to compact spaces and non-negative curvature, since Theorem \ref{thm:eigen-distance} relies on compactness and contraction properties for a fixed point argument. As the example of the Euclidean distance for the random walk on $\bZ$ shows there exist also Wasserstein eigendistances beyond this setting. However, general existence in non-compact spaces is a harder problem. In \cite{OLLIVIER:11} it is observed that coarse Ricci curvature is compatible with other notions of negative curvature, for example $\delta$-hyperbolic spaces, which gives hope that Wasserstein eigendistances also exist in such settings.

\subsection{Infinite dimensions}
In a similar vein to the previous subsection, particle systems on for example $\{0,1\}^{\bZ^d}$ are an interesting area to apply the theory of Wasserstein eigendistances. But even though the state space is compact Assumption \ref{as:regularity} is not satisfied, since $x\mapsto P^x$ is not continuous with respect to the total variation distance. Also, here it is not ideal to assume bounded metrics, as is done in Theorem \ref{thm:eigen-distance}. For example in the case of infinitely many independent spin flips, as the limit as $n\to\infty$ of Example \ref{example:ISF}, $\rho_0(x,y) =0$ if $x$ and $y$ disagree at only finitely many sites, and $\rho(x,y) =1$ otherwise, is a Wasserstein eigendistance with curvature 0. But it is clearly a very degenerate pseudo-metric and very uninforming about the dynamics. Instead the Hamming distance $\rho_H(x,y)=\sum_{i}\ind_{x(i)\neq y(i)}$ is a much more informative, has positive curvature, and it can be obtained as the limit of eigendistances of the finite-dimensional systems. One interesting observation is that $\rho_0(x,y) = \ind_{\rho_H(x,y)<\infty}$, so one could argue that $\rho_0$ and $\rho_H$ are in fact the same metric observed at different scales.

\subsection{Continuous time}
Here we made use of the fact that time is discrete, which gives us a minimal time unit for which we optimize in  \eqref{eq:W}. For finite state spaces there is not much difference between a continuous time Markov chain and a lazy discrete time chain. However, for example diffusions have no natural discrete time skeleton. Even defining a Wasserstein eigendistance in continuous time is more delicate. On possibility is to consider a family of fixed point problems via
\begin{align}
\cW_{1,\rho_t}(P_t^x,P_t^y) = e^{-\kappa t}\rho_t(x,y) \quad \forall x,y\in E, t\geq 0.
\end{align}
Ideally, there exists $\rho\in\cM$ so that $\rho_t=\rho$ for all $t\in[0,t_0]$ and some $t_0 \in(0,\infty]$, but this is not guaranteed, and one only has a bound
\begin{align}
\cW_{1,\rho_t}(P_{nt}^x,P_{nt}^y) \leq e^{-\kappa n t}\rho_t(x,y) \quad \forall x,y\in E, t\geq 0, n\in\bN.
\end{align}

\subsection{Finding or approximating Wasserstein eigendistances}
In the examples we used well-understood properties and guesses to find Wasserstein eigendistances as solution to the fixed-point problem $(1-\kappa)^{-1}W(\rho)=\rho$. In general this is not feasible, so one needs better ways to find or at least approximate Wasserstein eigendistances. Theorems \ref{thm:maximal} and \ref{thm:eigenfunction} provide limited ways in this direction, but are very specific to curvature 0 or good eigenfunctions of $P$. Example \ref{example:ssrw} shows that a random walk on the discrete torus has an eigendistance which converges as $L\to\infty$, and one can show that this is indeed a Wasserstein eigendistance for Brownian motion on the continuous torus. This is a specific example where a Markov chain converges to a continuous time Markov process, and the corresponding Wasserstein eigendistances converge to a Wasserstein eigendistance of the limiting process. Under which conditions is such a statement true for other processes?

\section{Proofs}

\subsection{Proof of Theorem \ref{thm:eigen-distance}}
Clearly any $\rho \in E_p(\kappa) \cM_{[0,1]}$ is a fixed point of $(1-\kappa)^{-1}W$. To show the existence of such a fixed point we will be using Schauder's fixed point theorem. To this end we need to find an appropriate compact and convex set. This will be done in several steps.

For $\mu,\nu\in \cP$ we write $\mu\wedge \nu$ for the largest common component of $\mu$ and $\nu$, characterized by
\[ \mu\wedge \nu(A) = \int_A \frac{d\mu}{d(\mu+\nu)}(x)\wedge \frac{d\nu}{d(\mu+\nu)}(x) \;(\mu+\nu)(dx) \]
for any event $A$. We write
\begin{align}
\alpha(x,y):= 1-\abs{P^x\wedge P^y},
\end{align} 
which is the image of the trivial metric $\ind_{x\neq y}$ under $W_1$. In particular $\alpha$ is a metric on $E$.
For $f:E\times E\to \bR$, define the  (semi-)norm $\normb{\cdot}_p$ with the metric structure from $(E,\alpha)^2$ via
\begin{align}
\normb{f}_p^p := \sup_{x_1,x_2,y_1,y_2\in E} \frac{\abs{f(x_1,y_1)^p-f(x_2,y_2)^p}}{\alpha(x_1,x_2)+\alpha(y_1,y_2)}.
\end{align}
Denote the unit ball with respect to $\normb{\cdot}_p$ by
\begin{align}
\cC_p:= \left\{ f:E\times E\to \bR :  \normb{f}_p\leq 1 \right\}.
\end{align}
The corresponding set of norm-bounded metrics is $\cM^p_{[0,1]}:= \cM_{[0,1]}\cap \cC_p$. 

\begin{lemma}\label{lemma:compact}
Assume Assumption \ref{as:regularity}. Then the set $\cM^p_{[0,1]}$
is compact and convex, and its elements are continuous.
\end{lemma}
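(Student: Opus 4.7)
The plan is to verify the three claims in turn, using Arzel\`a--Ascoli for compactness and direct arguments for convexity and continuity. The engine throughout is the continuity of $\alpha(x,y) = 1 - \abs{P^x \wedge P^y}$ on $E \times E$: by Assumption \ref{as:regularity}(a), $x \mapsto P^x$ is continuous in total variation, so $\alpha$ is continuous, and by compactness of $E^2$ it is uniformly continuous with some modulus $\omega_\alpha$. For any $\rho \in \cM^p_{[0,1]}$, the defining bound
\[ \abs{\rho^p(x_1,y_1) - \rho^p(x_2,y_2)} \leq \alpha(x_1,x_2) + \alpha(y_1,y_2) \]
combined with $\omega_\alpha$ yields a common modulus of continuity for all functions $\rho^p$ with $\rho$ in the family; composing with the uniformly continuous map $t \mapsto t^{1/p}$ on $[0,1]$ gives a common modulus $\omega$ for all such $\rho$. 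This immediately shows every element is continuous.

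For compactness inside $C(E^2)$ with the sup norm, Arzel\`a--Ascoli applies: uniform boundedness from $\rho \leq 1$ together with uniform equicontinuity from $\omega$ give relative compactness. Closedness is then direct: if $\rho_n \to \rho$ uniformly, then symmetry, $\rho(x,x)=0$, the triangle inequality, the bound $\rho \leq 1$, and each of the defining ratio inequalities underlying $\normb{\rho}_p \leq 1$ all pass to pointwise limits.

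For convexity, take $\rho = t\rho_1 + (1-t)\rho_2$ with $\rho_1, \rho_2 \in \cM^p_{[0,1]}$ and $t \in [0,1]$. The pseudo-metric axioms and $\rho \in [0,1]$ are obviously preserved. The remaining constraint $\normb{\rho}_p \leq 1$ is immediate for $p=1$ by sublinearity of the ordinary Lipschitz seminorm. For general $p$ the natural route is to reparameterize via $\sigma := \rho^p$ and check that the set of admissible $\sigma$'s is convex: $\sigma \leq 1$ and the $1$-Lipschitz condition on $\sigma$ with respect to $\alpha + \alpha$ are linear constraints, while the pseudo-metric condition on $\sigma^{1/p}$ is preserved under convex combinations of $\sigma$ via a Minkowski-type inequality
\[ \bigl(t(a+c)^p + (1-t)(b+d)^p\bigr)^{1/p} \leq \bigl(ta^p + (1-t)b^p\bigr)^{1/p} + \bigl(tc^p + (1-t)d^p\bigr)^{1/p}. \]

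I expect the main obstacle to be this convexity step for $p > 1$: the nonlinearity of $\rho \mapsto \rho^p$ blocks any direct manipulation of $\normb{\rho}_p$ under standard convex combinations of $\rho$, and the $\sigma$-reparameterization (or an equivalent appeal to the same Minkowski inequality) appears essential to close the argument.
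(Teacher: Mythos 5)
Your continuity and compactness arguments are essentially the paper's: continuity of $\alpha$ from Assumption \ref{as:regularity}~a), the bound $\normb{\rho}_p\leq 1$ giving a common modulus of continuity for the functions $\rho^p$, transfer to $\rho$ itself via $\abs{a-b}^p\leq\abs{a^p-b^p}$ (equivalently, uniform continuity of $t\mapsto t^{1/p}$ on $[0,1]$), and then Arzel\`a--Ascoli together with closedness under uniform limits. That part is sound and matches the paper.

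The convexity step, however, has a genuine gap. The lemma asserts convexity of $\cM^p_{[0,1]}$ under ordinary convex combinations $t\rho_1+(1-t)\rho_2$, and this is also the form in which it is used later (the closed convex hull $B_2$ of $B_1$, the claim that $0$ is an extremal point of $\cM^p_{[0,\orho]}$, and Schauder's fixed point theorem all refer to convex combinations of $\rho$, not of $\rho^p$). Your reparameterization $\sigma=\rho^p$ proves a different statement: that the image of $\cM^p_{[0,1]}$ under $\rho\mapsto\rho^p$ is convex, i.e.\ that $(t\rho_1^p+(1-t)\rho_2^p)^{1/p}$ again lies in $\cM^p_{[0,1]}$ (your Minkowski-type inequality does correctly give the triangle inequality for this combination, as in Proposition \ref{prop:tensor}). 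That is a $p$-convexity property, and it says nothing about membership of $t\rho_1+(1-t)\rho_2$, which is what the lemma claims. Your own closing remark concedes that direct manipulation of $\normb{\,\cdot\,}_p$ under ordinary convex combinations is blocked, and that concern is well founded: for $p>1$ the constraint on a single pair of values $a=f(x_1,y_1)$, $b=f(x_2,y_2)$, namely $\abs{a^p-b^p}\leq c$, does not cut out a convex region --- for $p=2$ the points $(\sqrt{c},0)$ and $(\sqrt{2c},\sqrt{c})$ both satisfy it while their midpoint has $a^2-b^2=\tfrac{1+\sqrt{2}}{2}\,c>c$ --- so $\normb{\,\cdot\,}_p$ is not subadditive and convexity cannot be obtained from unit-ball considerations alone. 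The paper itself simply declares closedness and convexity straightforward, so your instinct that this is the delicate point for $p>1$ is reasonable; but as written your argument establishes convexity of the wrong set, and closing the lemma as stated would require either an argument that genuinely uses the pseudo-metric structure of $\cM_{[0,1]}$, or a reworking of the fixed-point construction entirely in terms of $\sigma=\rho^p$, where your computation would then be the relevant one. For $p=1$ your proof is complete.
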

\begin{proof}
It is straight forward that $\cM^p_{[0,1]}$ is closed and convex. Continuity follows from the fact that the metric $\alpha$ is by Assumption \ref{as:regularity} continuous.

Therefore, relative compactness is sufficient to prove the claim, which will follow from an application of the Arzel\`a-Ascoli theorem. 
Clearly $\cM^p_{[0,1]}$ is pointwise bounded, so we only have to check equicontinuity. 
Fix $x_1,y_1\in E$ and $\epsilon>0$, and choose neighborhoods $U_{x_1},U_{y_1}$ around these points satisfying $\alpha(x_1,U_{x_1}), \alpha(y_1,U_{y_1})\subset[0,\epsilon)$.
Then, for any $\rho\in \cM^p_{[0,1]}$ and $(x_2,y_2)\in U_{x_1}\times U_{y_1}$,
\begin{align} \abs{\rho(x_1,y_1)^p-\rho(x_2,y_2)^p}< 2\epsilon
\end{align}
since $\normb{\rho}_p\leq 1$, and with $\rho\geq0$ this shows equicontinuity.
\end{proof}
Next we will prove a technical lemma, which tells us that a coupling of two probability measures approximately contains a coupling for any sub-probability measures of the marginals.
\begin{lemma}\label{lemma:coupling-decomposition}
Let $\mu,\nu\in\cP$ which each decompose into two sub-probability measures $\mu_1+\mu_2$ and $\nu_1+\nu_2$. Then, for any coupling $\pi\in \cP_{\mu,\nu}$, there exists a sub-probability measure $\pi_1\leq \pi$ with $\pi_1(\cdot,E)\leq \mu_1$, $\pi_1(E,\cdot)\leq \nu_1$ and 
\[\abs{\pi_1}\geq 1-\abs{\mu_2}-\abs{\nu_2}.\]
\end{lemma}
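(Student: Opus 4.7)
The plan is to construct $\pi_1$ explicitly as a density reweighting of $\pi$ using the Radon--Nikodym derivatives of the sub-measures with respect to their totals.

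First I would note that since $\mu_1,\mu_2\geq 0$ and $\mu_1+\mu_2=\mu$, we have $\mu_i\ll\mu$ and the densities $f_i := d\mu_i/d\mu$ satisfy $f_1,f_2\in[0,1]$ with $f_1+f_2=1$ $\mu$-a.e.\ (and similarly $g_i := d\nu_i/d\nu$ on the $\nu$ side). Then I would define
\begin{align}
\pi_1(dx,dy) := f_1(x)\,g_1(y)\,\pi(dx,dy).
\end{align}
Since $f_1,g_1\in[0,1]$, manifestly $\pi_1\leq\pi$.

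Next I would verify the marginal conditions. Using that $\pi$ has first marginal $\mu$ and Fubini,
\begin{align}
\pi_1(A\times E) = \int_{A\times E} f_1(x)\,g_1(y)\,\pi(dx,dy) \leq \int_A f_1(x)\,\pi(dx,E) = \int_A f_1\,d\mu = \mu_1(A),
\end{align}
where we used $g_1\leq 1$; and analogously $\pi_1(E,\cdot)\leq\nu_1$.

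For the total mass lower bound, the key elementary inequality is
\begin{align}
f_1(x)\,g_1(y) = (1-f_2(x))(1-g_2(y)) \geq 1 - f_2(x) - g_2(y),
\end{align}
which holds because $f_2,g_2\geq 0$ gives $f_2 g_2\geq 0$. Integrating against $\pi$ and using the marginal identities $\int f_2(x)\,\pi(dx,dy)=\int f_2\,d\mu=|\mu_2|$ and likewise for $g_2$, we obtain
\begin{align}
|\pi_1| \geq 1 - |\mu_2| - |\nu_2|,
\end{align}
which is exactly the claimed bound.

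There is no real obstacle here beyond bookkeeping with the densities; the proof is essentially the two-line inequality $(1-a)(1-b)\geq 1-a-b$ for $a,b\in[0,1]$ applied pointwise to the Radon--Nikodym derivatives, together with Fubini for the marginals. The construction is canonical and does not require any disintegration of $\pi$ into regular conditional distributions.
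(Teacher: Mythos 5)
Your proof is correct, and it takes a genuinely different route from the paper. The paper invokes the fact that probability measures on a Polish space are standard: it represents $\mu$ and $\nu$ as pushforwards of Lebesgue measure on $[0,1]$, arranged so that $\mu_1$ and $\mu_2$ correspond to the subintervals $[0,|\mu_1|]$ and $[|\mu_1|,1]$ (and similarly for $\nu$), lifts $\pi$ to a coupling $\widehat{\pi}$ of Lebesgue measures on the unit square, and defines $\pi_1$ as the pushforward of $\widehat{\pi}$ restricted to the corner rectangle $[0,|\mu_1|]\times[0,|\nu_1|]$; the mass bound then comes from an inclusion--exclusion estimate. You instead reweight $\pi$ by the product of Radon--Nikodym densities, $\pi_1(dx,dy)=f_1(x)g_1(y)\,\pi(dx,dy)$, and get the mass bound from the pointwise inequality $(1-f_2)(1-g_2)\geq 1-f_2-g_2$. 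Your construction is more elementary and more general: it avoids the standard-space isomorphism and the lifting of $\pi$ to the square (a step the paper passes over quickly), it works on arbitrary measurable spaces, and it is canonical given $(\mu_1,\nu_1,\pi)$, which is all the later applications (Lemmas \ref{lemma:F-Lipschitz} and \ref{lemma:weakly-measurable}) need, since they only use $\pi_1\leq\pi$, the marginal dominations, and the mass lower bound. One cosmetic remark: the appeal to Fubini is not needed; the marginal estimate only uses $g_1\leq 1$ and the fact that the first marginal of $\pi$ is $\mu$ (note $f_1(x)g_1(y)$ is defined $\pi$-a.e.\ precisely because the marginals of $\pi$ are $\mu$ and $\nu$).
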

\begin{proof}
Since every probability measure on a Polish space is standard there exists a measurable map $\phi_\mu:[0,1]\to E$ so that 
$\mu=dz \circ \phi_\mu^{-1}$, that is $\mu$ is the image measure of the Lebesgue measure $dz$ on the unit interval under $\phi_\mu$. We can additionally assume $dz|_{[0,\abs{\mu_1}]}\circ \phi_\mu^{-1}=\mu_1$ and $dz|_{[\abs{\mu_1},1]}\circ \phi_\mu^{-1}=\mu_2$, with $dz|_I$ being the restriction of the Lebesgue measure to the interval $I\subset[0,1]$. Similarly there is a $\phi_\nu:[0,1]\to E$ with the corresponding properties for $\nu$. 

The coupling $\pi$ then corresponds to a coupling $\widehat{\pi}$ of two Lebesgue measures on the unit square, so that $\pi= \widehat{\pi}\circ \psi^{-1}$ with $\psi:[0,1]^2\to E^2$, $(z_1,z_2)\mapsto (\phi_\mu(z_1),\phi_\nu(z_2))$. By construction $\mu_1 = \widehat{\pi}|_{[0,|\mu_1|]\times[0,1]}\circ \psi^{-1}(\cdot \times E)$, and similarly for the $\mu_2,\nu_1$ and $\nu_2$. 

Define $\pi_1:= \widehat{\pi}|_{[0,|\mu_1|]\times[0,|\nu_1|]}\circ \psi^{-1}$, which is a sub-probability measure of $\pi$, and clearly
\begin{align}
\pi_1(\cdot \times E) 
= \widehat{\pi}|_{[0,|\mu_1|]\times[0,|\nu_1|]}\circ\psi^{-1} (\cdot\times E)
\leq \widehat{\pi}|_{[0,|\mu_1|]\times[0,1]}\circ\psi^{-1}(\cdot\times E) = \mu_1,
\end{align}
and similarly $\pi_1(E\times \cdot )\leq \nu_1$. Furthermore, $\widehat{\pi}|_{[|\mu_1|,1]\times[0,1]}\circ\psi^{-1}(E\times E) = 1-\abs{\mu_1}=\abs{\mu_2}$ and $\widehat{\pi}|_{[0,1]\times[|\nu_1|,1]}\circ\psi^{-1}(E\times E) = 1-\abs{\nu_1}=\abs{\nu_2}$, implying 
\begin{align}
\abs{\pi_1} = 1 &- \widehat{\pi}|_{[|\mu_1|,1]\times[0,1]}\circ\psi^{-1}(E\times E) - \widehat{\pi}|_{[0,1]\times[|\nu_1|,1]}\circ\psi^{-1}(E\times E) \\
&+ \widehat{\pi}|_{[|\mu_1|,1]\times[|\nu_1|,1]}\circ\psi^{-1}(E\times E) \geq 1-\abs{\mu_2}-\abs{\nu_2}.	\qedhere
\end{align}
\end{proof}
\begin{lemma}\label{lemma:F-Lipschitz}
The map $W_p$ satisfies $\sup_{\rho\in\cM_{[0,1]}} \normb{W_p(\rho)}_p\leq 1$.
\end{lemma}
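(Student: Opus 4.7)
The plan is to bound $|W_p(\rho)(x_1,y_1)^p - W_p(\rho)(x_2,y_2)^p|$ by $\alpha(x_1,x_2)+\alpha(y_1,y_2)$ by a coupling transfer argument, using Lemma \ref{lemma:coupling-decomposition} as the main tool. By symmetry it suffices to bound $W_p(\rho)(x_1,y_1)^p - W_p(\rho)(x_2,y_2)^p$ from above.

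Fix $\rho\in\cM_{[0,1]}$ and four points $x_1,x_2,y_1,y_2\in E$, and let $\pi\in\cP_{x_2,y_2}$ be an optimal coupling, i.e.\ $\int\rho^p\,d\pi = W_p(\rho)(x_2,y_2)^p$ (existence follows from compactness of $\cP_{x_2,y_2}$ under weak topology together with measurability of $\rho$; otherwise work with near-optimal couplings and take $\epsilon\to 0$). Apply Lemma \ref{lemma:coupling-decomposition} to $\pi$ with the decompositions
\begin{align}
P^{x_2} = (P^{x_1}\wedge P^{x_2}) + \bigl(P^{x_2} - P^{x_1}\wedge P^{x_2}\bigr), \qquad P^{y_2} = (P^{y_1}\wedge P^{y_2}) + \bigl(P^{y_2} - P^{y_1}\wedge P^{y_2}\bigr),
\end{align}
obtaining a sub-probability measure $\pi_1\leq \pi$ whose marginals are dominated by $P^{x_1}\wedge P^{x_2}\leq P^{x_1}$ and $P^{y_1}\wedge P^{y_2}\leq P^{y_1}$ respectively, and with
\begin{align}
|\pi_1| \geq 1 - \alpha(x_1,x_2) - \alpha(y_1,y_2).
\end{align}

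Next, extend $\pi_1$ to a full coupling of $(P^{x_1},P^{y_1})$. The residuals $P^{x_1}-\pi_1(\cdot,E)$ and $P^{y_1}-\pi_1(E,\cdot)$ are non-negative measures of equal mass $1-|\pi_1|$; let $\tilde\pi$ be any coupling of these (for instance the re-scaled product), and set $\pi':= \pi_1 + \tilde\pi\in\cP_{x_1,y_1}$. Since $\rho\leq 1$ and $\pi-\pi_1\geq 0$,
\begin{align}
W_p(\rho)(x_1,y_1)^p - W_p(\rho)(x_2,y_2)^p
&\leq \int \rho^p\,d\pi' - \int \rho^p\,d\pi \\
&= \int\rho^p\,d\tilde\pi - \int\rho^p\,d(\pi-\pi_1) \\
&\leq |\tilde\pi| = 1-|\pi_1| \leq \alpha(x_1,x_2)+\alpha(y_1,y_2).
\end{align}
Swapping the roles of $(x_1,y_1)$ and $(x_2,y_2)$ yields the matching lower bound, so $\normb{W_p(\rho)}_p\leq 1$.

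The main technical point is the transfer of the optimal coupling $\pi$ for $(P^{x_2},P^{y_2})$ to a valid coupling of $(P^{x_1},P^{y_1})$ while losing only the ``common mass'' controlled by $\alpha$; once Lemma \ref{lemma:coupling-decomposition} is available this is essentially mechanical, and the bound $\rho^p\leq 1$ together with monotonicity makes the resulting discrepancy at most $|\tilde\pi|$. A minor subtlety is guaranteeing existence of the optimal $\pi$ for the measurable but possibly non-continuous $\rho$; this can be handled either by invoking weak compactness of $\cP_{x_2,y_2}$ together with a monotone/approximation argument or by working with $\epsilon$-optimal couplings throughout.
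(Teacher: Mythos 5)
Your proof is correct and takes essentially the same route as the paper: the same application of Lemma \ref{lemma:coupling-decomposition} with the decomposition through $P^{x_1}\wedge P^{x_2}$ and $P^{y_1}\wedge P^{y_2}$, the same completion of $\pi_1$ to a coupling in $\cP_{x_1,y_1}$ by the rescaled product of the residual marginals, and the same use of $\rho^p\leq 1$ to bound the leftover mass by $\alpha(x_1,x_2)+\alpha(y_1,y_2)$. Your aside on handling existence of the optimal coupling via $\epsilon$-optimal couplings is a harmless refinement of a point the paper passes over silently.
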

\begin{proof}
Fix $\rho\in \cM^p_{[0,1]}$. We have to show that $W_p(\rho)$ satisfies 
\begin{align}
\abs{W_p(\rho)^p(x_1,y_1)-W_p(\rho)^p(x_2,y_2)}\leq \alpha(x_1,x_2)+\alpha(y_1,y_2).
\end{align}
for any $x_1,x_2,y_1,y_2\in E$. To do so, assume w.l.o.g. that $W_p(\rho)(x_1,y_1)\geq W_p(\rho)(x_2,y_2)$ and let $\pi^{x_2,y_2}\in \cP_{x_2,y_2}$ be an optimal coupling with respect to $\rho$. 
By writing $P^{x_2}=P^{x_1}\wedge P^{x_2}+ (P^{x_2}-P^{x_1}\wedge P^{x_2})$ and similar for $P^{y_2}$ we find by Lemma \ref{lemma:coupling-decomposition} a sub-probability measure $\pi_1\leq \pi^{x_2,y_2}$. 
Using the properties of $\pi_1$ we have
\begin{align}\label{eq:pi-approx}
 \pi_1+ (1-\abs{\pi_1})^{-1}(P^{x_1}-\pi_1(\cdot,E))\otimes (P^{y_1}-\pi_1(E,\cdot)) \in \cP_{x_1.y_1} 
\end{align}
and
\begin{align}
&\abs{W_p(\rho)^p(x_1,y_1)-W_p(\rho)^p(x_2,y_2)} \\
&= \inf_{\pi\in\cP_{x_1,y_1} }\int \rho^p(X,Y) d\pi(X,Y)-\int \rho^p(X,Y) d\pi^{x_2,y_2}(X,Y)	\\
&\leq \int \rho^p(X,Y) d\left[(1-\abs{\pi_1})^{-1}(P^{x_1}-\pi_1(\cdot,E))\otimes (P^{y_1}-\pi_1(E,\cdot)\right](X,Y)	\\
&\leq 1-\abs{\pi_1} \leq 1-\abs{P^{x_1}\wedge P^{x_2}} + 1- \abs{P^{y_1}\wedge P^{y_2}}=\alpha(x_1,x_2)+\alpha(y_1,y_2).\qedhere
\end{align}
\end{proof}

\begin{lemma}\label{lemma:F-continuous}
The map $W_p$ satisfies $\norm{W_p(\rho_1)^p-W_p(\rho_2)^p}_\infty \leq \norm{\rho_1^p-\rho_2^p}_\infty$. In particular $W$ is continuous.
\end{lemma}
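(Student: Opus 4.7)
The plan is to prove the inequality directly by transporting an optimal (or $\epsilon$-optimal) coupling from one metric to the other, and then to deduce continuity of $W_p$ from the uniform continuity of the maps $t\mapsto t^p$ and $t\mapsto t^{1/p}$ on $[0,1]$.

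Fix $x,y\in E$ and $\epsilon>0$. I would pick a coupling $\pi\in\cP_{x,y}$ which is $\epsilon$-optimal for $W_p(\rho_2)$, i.e.\ $\int \rho_2^p\,d\pi \le W_p(\rho_2)^p(x,y)+\epsilon$. (Existence of an exactly optimal coupling can be obtained by compactness of $\cP_{x,y}$ in the weak topology together with lower semi-continuity of the cost, but using an $\epsilon$-optimal coupling sidesteps any measurability/lower semi-continuity subtlety.) Then $\pi$ is still a valid coupling of $P^x$ and $P^y$, so
\begin{align}
W_p(\rho_1)^p(x,y) \le \int \rho_1^p\,d\pi = \int \rho_2^p\,d\pi + \int (\rho_1^p-\rho_2^p)\,d\pi \le W_p(\rho_2)^p(x,y)+\epsilon+\|\rho_1^p-\rho_2^p\|_\infty.
\end{align}
Letting $\epsilon\downarrow 0$ gives $W_p(\rho_1)^p(x,y)-W_p(\rho_2)^p(x,y)\le \|\rho_1^p-\rho_2^p\|_\infty$. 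Swapping the roles of $\rho_1$ and $\rho_2$ and taking the sup over $x,y\in E$ yields the claimed inequality.

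For continuity, the two maps $t\mapsto t^p$ and $t\mapsto t^{1/p}$ are uniformly continuous on the compact interval $[0,1]$. Hence $\rho\mapsto \rho^p$ and $\sigma\mapsto \sigma^{1/p}$ are uniformly continuous for the sup norm on the spaces of $[0,1]$-valued functions, and the inequality just proved shows that the induced map $\rho^p\mapsto W_p(\rho)^p$ is $1$-Lipschitz in sup norm. Composing these three maps shows that $W_p:\cM_{[0,1]}\to\cM_{[0,1]}$ is continuous in sup norm, which is the relevant topology since this is how compactness of $\cM_{[0,1]}^p$ was formulated in Lemma \ref{lemma:compact}.

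There is no serious obstacle here; the only thing to be mindful of is that one should not assume existence of an optimal coupling without justification, which is why I would work with an $\epsilon$-optimal coupling. The result will then feed into the Schauder fixed point argument together with Lemmas \ref{lemma:compact} and \ref{lemma:F-Lipschitz} by providing the continuity of the self-map $(1-\kappa)^{-1}W_p$ on the compact convex set $\cM^p_{[0,1]}$.
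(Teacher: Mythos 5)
Your proof is correct and takes essentially the same route as the paper: bound $W_p(\rho_1)^p(x,y)$ by integrating $\rho_1^p$ against a (near-)optimal coupling for $W_p(\rho_2)(x,y)$, absorb the difference into $\norm{\rho_1^p-\rho_2^p}_\infty$, and symmetrize. The only differences are cosmetic: you use an $\epsilon$-optimal coupling where the paper invokes an exact optimizer, and you make explicit the deduction of continuity of $W_p$ via uniform continuity of $t\mapsto t^p$ and $t\mapsto t^{1/p}$ on $[0,1]$, which the paper leaves implicit.
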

\begin{proof}
Fix $\rho_1,\rho_2 \in \cM^\alpha_{[0,1]}$ and $x,y\in E$. W.l.o.g. assume $W_p(\rho_1)(x,y)\geq W_p(\rho_2)(x,y)$ and let $\pi^{x,y}\in \cP_{x,y}$ be an optimal coupling for $W_p(\rho_2)(x,y)=\cW_{p,\rho_2}(P^x,P^y)$. Then
\begin{align}
&\abs{W_p(\rho_1)^p(x,y)-W_p(\rho_2)^p(x,y)} \\
&= \inf_{\pi\in\cP_{x,y}}\int \rho_1^p(X,Y)d\pi(X,Y)-\int \rho_2^p(X,Y)d\pi^{x,y}(X,Y)	\\
&\leq \int \rho_1^p(X,Y)-\rho_2^p(X,Y)d\pi^{x,y}(X,Y) \\
&\leq \norm{\rho_1^p-\rho_2^p}_{\infty}.	\qedhere
\end{align}
\end{proof}

\begin{proof}[Proof of Theorem \ref{thm:eigen-distance}]
Fix $\orho\in\cM_{[0,1]}$ non-degenerate with $W_p(\orho)\leq \orho$ and \eqref{eq:orho-inf}.  

For $\rho \in \cM_{[0,\orho]}$, let $\lambda(\rho):=\sup_{x,y:\orho(x,y)>0}\frac{\rho(x,y)}{\orho(x,y)}\in [0,1]$ measure the scale of $\rho$ with respect to $\orho$. Note that the function $\lambda$ is continuous, since
\begin{align}
&\abs{\lambda(\rho_1)-\lambda(\rho_2)}
\overset{\text{\tiny w.l.o.g.}}{=}
\sup_{x,y} \frac{\rho_1(x,y)}{\orho(x,y)} - \sup_{x,y} \frac{\rho_2(x,y)}{\orho(x,y)}\\
&\leq 
\sup_{x,y} \frac{\rho_1(x,y) - \rho_2(x,y)}{\orho(x,y)} \leq \frac{\norm{\rho_1-\rho_2}_\infty}{\inf_{x,y:\orho(x,y)>0}\orho(x,y)}.
\end{align}
Also clearly $\lambda(\rho)=0$ iff $\rho=0$.

Define $\cM_{[0,\orho]}^p:=\cM_{[0,\orho]}\cap \cM_{[0,1]}^p$, which is convex and compact by Lemma \ref{lemma:compact}.

Consider now the sets 
\[ B_1 := \{W_p(\rho) : \rho \in \cM_{[0,\orho]},\lambda(\rho)=1 \} \]
and $B_2:=\overline{\operatorname{conv}(B_1)}$ its closed convex hull. By Lemma \ref{lemma:F-Lipschitz} and $W_p(\orho)\leq \orho$ we have $B_1\subset \cM_{[0,\orho]}^p:=\cM_{[0,\orho]}\cap \cM_{[0,1]}^p$, and by convexity of $\cM_{[0,\orho]}^p$ also $B_2\subset \cM_{[0,\orho]}^p$, which makes $B_2$ compact as well. Since $0$ is an extremal point of $\cM^p_{[0,\orho]}$ it follows that $0\in B_2$ if and only if $0\in \overline{B_1}$. For this to be the case there has to exist a sequence $\rho_n$ with $\lambda(\rho_n)=1$ so that $W_p(\rho_n)\to 0$. But then, by Assumption \ref{as:regularity}, $\rho_n\to 0$, which leads to a contradiction since $\lambda$ is continuous and $\lim\lambda(\rho_n)=1\neq0=\lambda(0)$. Therefore $0\not\in B_2$.

On the convex compact set $B_2$ define the map $F:B_2\to B_2$, 
$
\rho \mapsto W_p\left(\frac{\rho}{\lambda(\rho)}\right).
$
It is well-defined since $0\not\in B_2$, and continuous as concatenation of continuous functions. By Schauder's fixed point theorem there exists a fixed point $\rho_*\in B_2$ of $F$ .
With $\kappa_*:=1-\lambda(\rho_*)$ we see that $W_p(\rho_*)=(1-\kappa_*)^{-1}\rho_*$, proving the theorem.
\end{proof}

\subsection{Proof of Theorem \ref{thm:maximal} and Theorem \ref{thm:markov-coupling}}
The proof of Theorem \ref{thm:maximal} is one half of the Knaster-Tarski fixed point theorem for complete lattices. Note that $\cM_{[0,1]}$ is not a complete lattice, since the minimum of two pseudo-metrics need not be a pseudo-metric.
\begin{proof}[Proof of Theorem \ref{thm:maximal}]
Set $B:=\{ \rho\in\cM_{[0,1]} : \rho\leq W_p(\rho) \}\in \cM_{[0,1]}$ and let $\rho_*:=\bigvee B$, where $\bigvee B$ denotes the pointwise supremum over all elements in the set $B$. Note that the set is not empty since $0\leq W_p(0)$. The claim is that $\rho_*$ is a fixed point, which would make it the largest fixed point since all fixed points are contained in $B$. To see this, by construction $\rho\leq \rho_*$ for any $\rho\in B$, and by monotonicity $W_p(\rho)\leq W_p(\rho_*)$. Since $\rho\in B$ this implies $\rho\leq W_p(\rho_*)$ for all $\rho\in B$. But $\rho_*$ is the smallest upper bound on $B$, so that $\rho_*\leq W_p(\rho_*)$ must hold. By monotonicity then $W_p(\rho_*)\leq W_p(W_p(\rho_*))$ and therefore $W_p(\rho_*)\in B$. But $\rho_*$ is an upper bound on $B$, hence $W_p(\rho_*)\leq \rho_*$ and we have $\rho_*=W_p(\rho_*)$.

To see that $\lim_{n\to\infty}W_p^n(\ind_{x\neq y})=\rho_*$, we  use the monotonicity of $W$. The sequence $W_p^n(\ind_{x\neq y})$ is decreasing in $n$ and hence converging to a fixed point $\rho_1\in\cM_{[0,1]}$ of $W$. Since $\rho_*$ is the maximal fixed point, $\rho_1\leq \rho_*$. But $\ind_{x\neq y}\geq \rho_*$ and hence $W_p^n(\ind_{x\neq y})\geq \rho_*$, which shows the claim.
\end{proof}

\begin{proof}[Proof of Theorem \ref{thm:eigenfunction}]
Write $\rho_1(x,y):= \abs{h(x)-h(y)}^\frac1p$ and $\rho_2(x,y) := \ind_{x\neq y}(h(x) + h(y))^\frac1p$, which are both metrics. We claim that  $\lambda^{-1}W_p(\rho_1)\geq \rho_1$ and $\lambda^{-1}W_p(\rho_2)\leq \rho_2$, from which the existence of a fixed point in $\cM_{[\rho_1,\rho_2]}$ follows via the proof of Theorem \ref{thm:eigen-distance}, since $F(\cM_{[\rho_1,\rho_2]})\subset \cM_{[\rho_1,\rho_2]})$. To show the claim,
\begin{align}
&W_p(\rho_1)^p(x,y) = \inf_{\pi\in\cP_{x,y}} \int \abs{h(X)-h(Y)} \pi(d(x,y)) \\
&\geq \abs{ \inf_{\pi\in\cP_{x,y}} \int h(X)-h(Y) \pi(d(x,y))} = \abs{Ph(x)-Ph(y)} = 
\lambda\rho_1^p
\end{align}
and
\begin{align}
&W_p(\rho_2)^p(x,y) = \ind_{x\neq y} \inf_{\pi\in\cP_{x,y}} \int \ind_{X_\neq Y}(h(x)+h(y)) \pi(d(x,y)) \\
&\leq \ind_{x\neq y} \inf_{\pi\in\cP_{x,y}} \int h(X)+h(Y) \pi(d(x,y)) = \ind_{x\neq y}(Ph(x)+Ph(y)) = \lambda\rho_2^p. \qedhere
\end{align}
\end{proof}

Before proving Theorem \ref{thm:markov-coupling} we need the following technical lemma. 
\begin{lemma}\label{lemma:weakly-measurable}
Let $\phi:E \to F$ be a $P$-homomorphism and $D_\phi:=\{(x,y)\in E\times E : \phi(x)=\phi(y)  \}$. Write $\cP_{x,y}(D_\phi):=\{\pi\in \cP_{x,y} : \pi(D_\phi)=1 \}$ for the couplings concentrated on $D_\phi$.

For any $A\subset \cP(E^2)$ closed the set 
\[ A^\phi:=\{ (x,y)\in E^2 : \cP_{x,y}(D_\phi)\subset A \} \]
is closed.
\end{lemma}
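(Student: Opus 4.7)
My approach is sequential closedness. Take a sequence $(x_n,y_n)\to(x,y)$ in $E^2$ with $(x_n,y_n)\in A^\phi$ for every $n$, so that $\cP_{x_n,y_n}(D_\phi)\subset A$. The task is to show $\cP_{x,y}(D_\phi)\subset A$: fix an arbitrary $\pi\in\cP_{x,y}(D_\phi)$ and aim to conclude $\pi\in A$. A first, purely structural observation is that the graph
\[
C:=\bigl\{(x,y,\pi)\in E^2\times\cP(E^2)\,:\,\pi\in\cP_{x,y}(D_\phi)\bigr\}
\]
is closed in $E^2\times\cP(E^2)$: continuity of $x\mapsto P^x$ (Assumption~\ref{as:regularity}(a)) makes the marginal conditions $\pi(\cdot\times E)=P^x$ and $\pi(E\times\cdot)=P^y$ pass to the limit, while continuity of $\phi$ makes $D_\phi$ closed in $E^2$, so $\pi(D_\phi)=1$ is preserved by upper semicontinuity of $\pi\mapsto\pi(D_\phi)$ on closed sets.

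The heart of the argument is an approximation step: I will construct $\pi_n\in\cP_{x_n,y_n}(D_\phi)$ with $\pi_n\to\pi$. Once such a sequence is at hand, $\pi_n\in A$ by hypothesis and closedness of $A$ force $\pi\in A$. The construction begins with Lemma~\ref{lemma:coupling-decomposition}: since $P^{x_n}\to P^x$ and $P^{y_n}\to P^y$ in total variation, applying the lemma to the decompositions $P^{x_n}=(P^{x_n}\wedge P^x)+(P^{x_n}-P^{x_n}\wedge P^x)$ and $P^{y_n}=(P^{y_n}\wedge P^y)+(P^{y_n}-P^{y_n}\wedge P^y)$ extracts a sub-probability measure $\pi'_n\le\pi$ whose marginals are dominated by $P^{x_n}\wedge P^x$ and $P^{y_n}\wedge P^y$, with total mass $1-o(1)$ and support still in $D_\phi$. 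What remains is to complete $\pi'_n$ to a full probability coupling of $P^{x_n}$ and $P^{y_n}$ whose support stays in $D_\phi$.

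The completion step is the main obstacle, as the residuals $P^{x_n}-\pi'_n(\cdot\times E)$ and $P^{y_n}-\pi'_n(E\times\cdot)$ must themselves be coupled on $D_\phi$, which requires their $\phi$-pushforwards to match. Here the $P$-homomorphism structure enters: the disintegration of $\pi$ along $\phi$ is well-defined because $\pi$ is concentrated on $D_\phi$ and $Q^{\phi(x)}=Q^{\phi(y)}$, a necessary condition for $\cP_{x,y}(D_\phi)\neq\emptyset$. Using this disintegration together with the continuity of $Q$ inherited from that of $P$, the residuals can be coupled fibrewise over the target space while the small mismatch of their $\phi$-pushforwards is absorbed into a vanishing error; the resulting $\pi_n$ lies in $\cP_{x_n,y_n}(D_\phi)$ and converges in total variation to $\pi$. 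This fibrewise adjustment, essentially a lower-hemicontinuity statement for the set-valued map $(x,y)\mapsto\cP_{x,y}(D_\phi)$, is the principal technical difficulty; the surrounding compactness-and-closedness manipulation is soft.
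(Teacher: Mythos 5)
Your overall route is the same as the paper's: sequential closedness, extraction of a sub-coupling $\pi'_n\leq\pi$ from Lemma~\ref{lemma:coupling-decomposition} applied to the decompositions through $P^{x_n}\wedge P^{x}$ and $P^{y_n}\wedge P^{y}$, and completion of the residuals to a coupling of $P^{x_n}$ and $P^{y_n}$ close to $\pi$ in total variation. Your fibrewise-over-$F$ completion is in fact the more careful version of the paper's completion step (it uses, as the paper does, that $\pi'_n\leq\pi$ forces the two marginals of $\pi'_n$ to have the same $\phi$-pushforward; the plain product of the residuals displayed in the paper is in general not concentrated on $D_\phi$), so on that point you are ahead of the written proof.

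The genuine gap is the phrase ``the small mismatch of their $\phi$-pushforwards is absorbed into a vanishing error.'' Membership in $\cP_{x_n,y_n}(D_\phi)$ is an exact constraint: any $\pi_n$ with $\pi_n(D_\phi)=1$ satisfies $P^{x_n}\circ\phi^{-1}=P^{y_n}\circ\phi^{-1}$, i.e.\ $Q^{\phi(x_n)}=Q^{\phi(y_n)}$. If these pushforwards differ at all, then $\cP_{x_n,y_n}(D_\phi)=\emptyset$, there is no coupling to construct, and the hypothesis $(x_n,y_n)\in A^\phi$ is vacuous; continuity of $x\mapsto Q^{\phi(x)}$ gives only approximate equality, which is useless for an exact support condition. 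Indeed no argument can close this gap in the stated generality: take $\phi=\id$ (always a $P$-homomorphism), $E=[0,1]$, $P^x=x\delta_1+(1-x)\delta_0$, and $A$ a nonempty closed set containing no coupling concentrated on the diagonal; then $\cP_{x,y}(D_\phi)\neq\emptyset$ only for $x=y$, so $A^\phi=E^2\setminus D$, which is not closed. Your construction (like the paper's, which silently relies on the same equality of pushforwards of the residuals) goes through exactly when $Q^{\phi(x_n)}=Q^{\phi(y_n)}$ along the sequence, e.g.\ when $(x_n,y_n)\in D_\phi$ or when $\phi$ is constant --- precisely the two situations in which the lemma is later invoked (the set $B_1=A^\phi\cap D_\phi$ in Theorem~\ref{thm:irreducibility1}, and the trivial homomorphism in Theorem~\ref{thm:markov-coupling}). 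With such a hypothesis added (note $D_\phi$ is closed, so the limit point inherits it, and continuity of $x\mapsto P^x$ gives $\pi_n\to\pi$ in total variation), your fibrewise completion does finish the proof; as written, the completion step does not.
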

\begin{proof}
Assume $A$ non-empty, otherwise the statement is trivial.
Consider a converging sequence $(x_n,y_n)_{n\in\bN}\subset A^\phi$, with limit $(x,y)$, and $\pi\in \cP_{x,y}(D_\phi)$. By writing $P^{x}=P^{x_n}\wedge P^{x}+ (P^{x}-P^{x_n}\wedge P^{x})$ and similar for $P^{y}$ we find by Lemma \ref{lemma:coupling-decomposition} a sub-probability measure $\pi_1\leq \pi$ with the listed properties. Writing $(\phi,\phi):(x,y)\mapsto (\phi(x),\phi(y)$ and $\diag(F)$ for the diagonal in $F^2$ we see that $\supp(\pi\circ (\phi,\phi)^{-1})\subset \diag(F)$, and by virtue of $\pi_1\leq \pi$ the same holds true for $\pi_1$. Consequently we have $\pi_1(\cdot \times E)\circ \phi^{-1} = \pi_1\circ \pi_1(E\times \cdot)\circ \phi^{-1}$. It follows that
\[ pi_n:=\pi_1 + (1-\abs{\pi_1})^{-1}[P^{x_n}-\pi_1(\cdot\times E))\otimes(P^{y_n}-\pi_1(E\times \cdot)] \in \cP_{x_n,y_n}(D_\phi)\subset A, \]
and
\[ 
\abs{\pi-\pi_n} =  1-\abs{\pi_1} \leq 1-\abs{P^{x_n}\wedge P^x}+1-\abs{P^{y_n}\wedge P^y} = \alpha(x,x_n) + \alpha(y,y_n).
\]
Since the right hand side converges to 0 and $A$ is closed it follows that $\pi\in A$. As $\pi \in \cP_{x,y}(D_\phi)$ was arbitrary it follows $\cP_{x,y}(D_\phi)\subset A$ and hence $(x,y)\in A^\phi$.
\end{proof}

\begin{proof}[Proof of Theorem \ref{thm:markov-coupling}]
By Theorem \ref{thm:eigen-distance} $\rho$ is continuous. Consider the multi-valued map or correspondence $\Psi:E^2 \twoheadrightarrow \cP$, $(x,y)\mapsto \cP_{x,y}$. Here we refer to the appendix for the minimal needed facts about multi-maps or correspondences used. Clearly $\Psi$ has non-empty compact values.
Let $\phi:E\to\{0\}$ be the trivial constant $P$-homomorphism. Note that $\Psi^u(A) = A^\phi$ from Lemma \ref{lemma:weakly-measurable} and hence $\Psi$ is weakly measurable.

Consider a second correspondence $\Pi:E^2 \twoheadrightarrow \cP$ given by 
\[\Pi(x,y) = \{ \pi\in \Psi(x,y) : \int \rho^p d\pi = W_p(\rho)^p \}. \]
By the 
Kuratowski-Ryll-Nardzewski Selection Theorem \ref{thm:selection} there exists a measurable selection of $\Pi$, meaning a measurable function $\hPP : E^2 \to \cP$ with $\hPP(x,y)\in \Pi(x,y)$, which is in the notation $\hPP^{x,y}$ the desired Markov transition kernel of the coupling realising $\rho$, and 
\begin{align}
\hE_{x,y}\rho^p(X_n,Y_n) &= \hE_{x,y} \hPP^{X_{n-1},Y_{n-1}}(\rho^p) = (1-\kappa)^p\hE_{x,y}\rho^p X_{n-1},Y_{n-1}\\
&=...=  (1-\kappa)^{pn}\rho^p(x,y).	\qedhere
\end{align}
\end{proof}

\subsection{Proofs regarding algebraic irreducibility}

\begin{proof}[Proof of Proposition \ref{prop:tensor}]
For $\rho=\rho_X$ we have
\begin{align}
W_p(\rho)^p((x,y),(u,v)) &= \inf_{\pi\in \cP_{(x,y),(u,v)}}\int \rho_X^p(X,U) \,\pi\left(d((X,Y),(U,V))\right)\\
&=\inf_{\pi'\in \cP_{x,u}} \int \rho_X^p(X,U) \,\pi'\left(d(X,U)\right) = W_p(\rho_X)^p(x,u).
\end{align}
The proof for $\rho=\rho_Y$ is analogous.

Assume now $\kappa_X=\kappa_Y=\kappa$.
Let $\pi_X^{x,u}$, $\pi_Y^{y,v}$ be optimal couplings for $\rho_X$ and $\rho_Y$, so that $\int \rho_X^p d\pi_X^{x,u} = (1-\kappa)^p\rho_X^p(x,u)$ and $\int \rho_Y^p d\pi^{y,v}_Y = (1-\kappa)^p\rho_Y^p(y,v)$. Then
\begin{align}
&\inf_{\pi\in \cP_{(x,y),(u,v)}} \int \rho^p\ d\pi 
\leq \int \rho^p\ d\left(\pi_X^{x,u}\otimes \pi_Y^{y,v}\right) 	
= a\int \rho_X^p\ d\pi_X^{x,u} + b\int \rho_Y^p\ d\pi^{y,v}_Y \\
&\quad = a(1-\kappa)^p\rho_X^p(x,u) + b(1-\kappa)^p\rho_Y^p(y,v)
=(1-\kappa)^p\rho^p((x,y),(u,v)).
\end{align}
For a lower bound,
\begin{align}
& \inf_{\pi\in \cP_{(x,y),(u,v)}} \int \rho^p d\pi 
\geq a\inf_{\pi\in\cP_{x,u}} \int \rho_X^p d\pi + b\inf_{\pi\in\cP_{y,v}}\int \rho_Y^p d\pi	\\
&= a(1-\kappa)^p\rho_X^p(x,u) + b(1-\kappa)^p\rho_Y^p(y,v)
= (1-\kappa)^p\rho^p((x,y),(u,v)).	\qedhere
\end{align}
\end{proof}

\begin{proof}[Proof of Proposition \ref{prop:markov-galaxies}]
First note that $E/\sim$ with metric $\rho_\sim([x],[y])=\rho(x,y)$ is a compact Polish space, and $x\mapsto[x]$ is continuous. 
Let $x,y\in E$ with $[x]=[y]$. Since $\rho$ is an eigendistance, $W_p(\rho)(x,y)=(1-\kappa)\rho(x,y)=0$. This implies that for any optimizer $\pi\in\cP_{x,y}$ of the Wasserstein distance $\int \ind_{[X]=[Y]} \;d\pi(X,Y)=1$. In particular it follows that 
\begin{align}\label{eq:class-markov}
\bP_x([X_1]\in d[z]) = \bP_y([X_1]\in d[z]).
\end{align}
With the Markov property of $X$ we get for any $[x_0],...,[x_t]\in E/\sim$ and any representative $x_0\in E$ of $[x_0]$
\begin{align}
&\bP_{x_0}([X_t]\in d[z] \;|\; [X_{t-1}]=[x_{t-1}],...,[X_1]=[x_1] ) 	\\
&= \bE_{x_0}\left(\bP_{X_{t-1}}([X_1]\in d[z]) \;\middle|\; [X_{t-1}]=[x_{t-1}],...,[X_1]=[x_1] \right) 
\end{align}
Since $[X_{t-1}]=[x_{t-1}]$ and \eqref{eq:class-markov}, the above simply equals $\bP_{x_{t-1}}([X_1]\in d[z])$ for any representative $x_{t-1}$, independent of the choice of  the representative $x_0$.
\end{proof}

\begin{proof}[Proof of Theorem \ref{thm:irreducibility1}]
For the proof it is convenient to work with reducibility instead of irreducibility. Therefore we will show the negated equivalence.
\par{$\neg b)\Rightarrow \neg a)$} Let $\rho\in E_p([0,1])M_{[0,1)}$ be a true pseudo-metric. By Proposition \ref{prop:markov-galaxies} the map $\phi(x):=[x]$ is a $P$-homomorphism. Note that by Theorem \ref{thm:eigen-distance} $\rho$ is continuous and therefore $E/\sim$ is a compact Polish metric space with metric $\rho_\sim([x],[y])=\rho(x,y)$. The map $\phi$ is not injective since $\rho$ is a true pseudo-metric. Furthermore, $\phi$ is not constant since $\rho$ is non-degenerate. Therefore $P$ is not algebraically irreducible.

%\par{$b)\Rightarrow a)$}
%Let $\phi$ be a non-constant non-bijective $P$-homomorphism.
%Define $\tilde\rho(x,y) := \ind_{\phi(x)\neq\phi(y)}$. For $x,y$ with $\phi(x)=\phi(y)$ we have $\phi P^x = \phi P^y$ and therefore $W_p(\tilde\rho)(x,y)=0$, which implies $W_p(\tilde\rho)\leq \tilde\rho$. By Theorem \ref{thm:eigen-distance} there exists $\rho\in E_{[0,1)}\cM_{[0,\tilde\rho]}$ which is a true pseudo-metric.

\par{$\neg a) \Rightarrow \neg c)$} Assume $P$ reducible. Then there exists a non-constant $P$-homomorphism $\phi$ with $\phi(x_0)=\phi(y_0)$ for some $x_0,y_0\in E$, $x_0\neq y_0$. Denote by $D_\phi:=\{(x,y)\in E\times E : \phi(x)=\phi(y)\}$ the set which maps onto the diagonal under $\phi$. 
For any $(x,y) \in D_{\phi}$ we have $P^x \circ \phi^{-1}= P^y \circ \phi^{-1}$, hence there exist couplings $\pi\in \cP_{x,y}$ with $\pi(D_\phi)=1$. Define
the correspondence $\Pi:E^2 \twoheadrightarrow \cP(E^2)$ via
\begin{align}
\Pi(x,y)=
\begin{cases}
\{\pi\in \cP_{x,y} : \pi(D_\phi)=1 \} ,& \quad (x,y)\in D_\phi\setminus D;	\\
\{\int \delta_{X,X} P^x(dX)\},&\quad x=y;	\\
\cP_{x,y},&\quad \text{otherwise}.
\end{cases}
\end{align}
To see that $\Pi$ is weakly measurable, let $A\subset \cP(E^2)$ be closed and note that
\begin{align}
\Pi^u(A) =& 
\left\{ (x,y)\in D_\phi : \cP_{x,y}(D_\phi)\subset A \right\}
\cup
\left\{ (x,x) \in D : \int \delta_{X,X} P^x(dX)\in A \right\}	\\
&\cup 
\left\{(x,y)\in E^2\setminus D_\phi : \cP_{x,y}\subset A \right\} =:B_1 \cup B_2 \cup B_3 .
\end{align}
By Lemma \ref{lemma:weakly-measurable} $A^\phi$ is closed, and since $\phi$ is continuous so is $B_1=A^\phi\cap D_\phi$. The set $B_2$ is closed as the pre-image of $A$ of the continuous map $(x,x)\mapsto \int \delta_{X,X} P^x(dX)$. For $B_3$ we use Lemma \ref{lemma:weakly-measurable} applied to the constant $P$-homomorphism $x\mapsto 0$ and $B_3= A^{x\mapsto 0}\cap D_\phi^c$, which is measurable as an intersection of a closed and an open set. Therefore $\Pi$ is weakly measurable, and by Theorem \ref{thm:selection} there exists a measurable selection in the form of a coupling operator $\hPP$. We can assume $\hPP$ to be symmetric by the remark following Definition \ref{def:symmetric-coupling}. 
Then we have $(\hPP^{x,y})^n(D_{\phi})=1$ for any $(x,y)\in D_\phi$ and $n\in\bN$. But $\phi$ is not constant and hence $D_\phi^c$ is an open non-empty subset of $E\times E$ with $(\hPP^{x,y})^n(D_{\phi}^c)=0$ for all $(x,y)\in D_\phi$. Therefore $\hPP$ is a non-irreducible symmetric coupling.

\par{$\neg c) \Rightarrow \neg b)$} 
Let $\hPP$ be a non-irreducible coupling operator. Therefore $(E\times E)_{sym}\setminus D$ can be partitioned into two sets $I$ and $J$ with non-empty interior so that $I$ does not communicate with $J$. That is $\left(\hPP_{sym}^{\{x,y\}}\right)^n(J)=0$ for all $\{x,y\}\in I$, $n\in \bN$. Consider $\overline{\rho}(x,y):=(\ind_{x\neq y} - \ind_{\{x,y\}\in I})\in\cM_{[0,1]}$. 
For any $\{x,y\}\in I$,
\begin{align}
W_p(\overline\rho)^p(x,y) \leq \hPP^{x,y}\overline\rho^p = 0,
\end{align}
therefore $W_p(\overline\rho)\leq \overline\rho$. By Theorem \ref{thm:eigen-distance} there exists a $p$-Wasserstein eigendistance $\rho$ of $P$ in $\cM_{[0,\overline\rho]}$ which is is true pseudo-metric since $I$ is non-empty.
\end{proof}

The main effort in proving Theorem \ref{thm:uniqueness} is the following lemma.
\begin{lemma}\label{lemma:uniqueness}
Assume $E$ finite and $P$ algebraically irreducible. Then, for any two $\rho,\rho'\in E_p([0,1))\cM_{[0,1]}$ there exists $\lambda>0$ so that $\rho'=\lambda\rho$.
\end{lemma}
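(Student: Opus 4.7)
The plan is to control $\rho$ and $\rho'$ via their pointwise ratio. By Theorem~\ref{thm:irreducibility1} both $\rho$ and $\rho'$ are proper metrics, and finiteness of $E$ then guarantees that $\lambda := \max_{x\neq y} \rho'(x,y)/\rho(x,y)$ and $\mu := \min_{x\neq y} \rho'(x,y)/\rho(x,y)$ lie in $(0,\infty)$ and are attained; the goal is to show $\lambda = \mu$, or equivalently $\rho' = \lambda \rho$ identically.

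First I would show that the associated curvatures $\kappa,\kappa'$ coincide. From $\mu\rho\leq \rho'\leq \lambda\rho$, monotonicity and scalar homogeneity of $W_p$ give
\[ \mu(1-\kappa)\rho \;\leq\; (1-\kappa')\rho' \;\leq\; \lambda(1-\kappa)\rho. \]
Evaluating the right inequality at a pair $(x_0,y_0)$ where $\rho'/\rho$ attains $\lambda$ yields $(1-\kappa')\lambda\rho(x_0,y_0) \leq (1-\kappa)\lambda\rho(x_0,y_0)$, hence $\kappa\leq \kappa'$; evaluating the left one at a minimizing pair reverses this, so $\kappa=\kappa'$.

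Next, apply Theorem~\ref{thm:markov-coupling} together with Remark~\ref{remark:symmetric} to obtain a symmetric coupling operator $\hPP$ realizing $\rho$, i.e.\ satisfying $\hPP^{x,y}(\rho^p)=(1-\kappa)^p\rho^p(x,y)$. At the maximizer $(x_0,y_0)$ I would chain
\[ (1-\kappa)^p \lambda^p \rho^p(x_0,y_0) = W_p(\rho')^p(x_0,y_0) \leq \hPP^{x_0,y_0}(\rho'^p) \leq \lambda^p \hPP^{x_0,y_0}(\rho^p) = \lambda^p(1-\kappa)^p\rho^p(x_0,y_0), \]
where the first inequality uses that $\hPP^{x_0,y_0}\in \cP_{x_0,y_0}$ and the second uses $\rho'^p\leq \lambda^p\rho^p$ pointwise. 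Equality throughout forces $\rho'^p = \lambda^p \rho^p$ $\hPP^{x_0,y_0}$-almost surely, so the symmetric set $S:=\{(x,y): \rho'(x,y)=\lambda\rho(x,y)\}$ (which contains $D$) is absorbing under $\hPP$ whenever the chain enters $S\setminus D$.

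To close the argument I would invoke Theorem~\ref{thm:irreducibility1}(c): since $P$ is algebraically irreducible, the symmetric coupling $\hPP$ is irreducible off the diagonal, so starting from $(x_0,y_0)$ any off-diagonal pair is reached with positive probability in finitely many steps (using finiteness of $E$). Combined with $S$ being absorbing off-diagonal, this forces $S\supseteq E\times E \setminus D$, hence $\rho'=\lambda\rho$ throughout. The main delicate step is the curvature-matching; once it is in place, the coupling produced by Theorem~\ref{thm:markov-coupling} essentially transports the equality $\rho'=\lambda\rho$ along trajectories, and irreducibility spreads it to the whole state space.
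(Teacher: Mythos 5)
Your proof is correct and follows essentially the same route as the paper: extremal ratio $\lambda$, curvature matching via monotonicity and homogeneity of $W_p$, and then the optimal coupling from Theorem \ref{thm:markov-coupling} combined with irreducibility from Theorem \ref{thm:irreducibility1}. The only cosmetic difference is that you propagate the equality $\rho'=\lambda\rho$ forward from the maximizing pair via an absorbing-set argument, whereas the paper reaches a contradiction by running the same coupling for $k$ steps to a pair where $\lambda\rho>\rho'$ and comparing with $(W_p^k(\rho'))^p$; the ingredients and mechanism are identical.
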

\begin{proof}
Let $\rho\in E_p(\kappa)\cM_{[0,1]}$ and $\rho'\in E_p(\kappa')\cM_{[0,1]}$ be two eigendistances. By Theorem \ref{thm:irreducibility1} both are proper metrics. Let  $\lambda:=\sup_{x}\sup_{y\neq x} \frac{\rho'(x,y)}{\rho(x,y)}$, which is finite since $E$ is finite. Then $\lambda\rho\geq \rho'$ and $\lambda\rho(x_0,y_0)=\rho'(x_0,y_0)>0$ for some $x_0,y_0\in E$. 
Then, since $\rho,\rho'$ are eigendistances and  $W_p$ in monotone,
\begin{align}
W_p(\lambda\rho)(x_0,y_0)&=(1-\kappa)\lambda\rho(x_0,y_0)= (1-\kappa)\rho'(x_0,y_0) \\
&= \frac{1-\kappa}{1-\kappa'}W_p(\rho')(x_0,y_0)\leq \frac{1-\kappa}{1-\kappa'}W_p(\lambda\rho)(x_0,y_0), 
\end{align}
 which implies $\kappa'\geq \kappa$. By reversing the roles of $\rho$ and $\rho'$ the above argument implies $\kappa'\leq \kappa$ as well, and hence $\kappa'=\kappa$.

Assume that $\lambda\rho\neq \rho'$. Then there exist $x_1,y_1\in E$ so that $\lambda\rho(x_1,y_1)>\rho'(x_1,y_1)$. By Theorem \ref{thm:markov-coupling} there exists a coupling operator $\hPP$ realising $W_p$. By Theorem \ref{thm:irreducibility1} $\hPP$ is irreducible and there exists $k\in\bN$ so that $\hP_{x_0,y_0}(\{X_k,Y_k\}=\{x_1,y_1\})>0$. Then
\begin{align}
(1-\kappa)^{pk}\rho'(x_0,y_0)^p &= (1-\kappa)^{pk}\lambda^p\rho(x_0,y_0)^p
= \hE_{x_0,y_0}\lambda^p\rho(X_k,Y_k)^p 		\\
&> \hE_{x_0,y_0}\rho'(X_k,Y_k)^p = \left((\hPP)^k(\rho')^p\right)(x_0,y_0)\\
&\geq (W_p^k(\rho'))^p(x_0,y_0) = (1-\kappa)^{pk}\rho'(x_0,y_0)^p.
\end{align}
The last line follows from the fact that $W$ is the infimum over all possible couplings. As we have shown a contradiction, this implies that $\lambda\rho=\rho'$, showing uniqueness of the $p$-Wasserstein eigendistance up to multiplication.
\end{proof}

\begin{proof}[Proof of Theorem \ref{thm:uniqueness}]
By Theorem \ref{thm:eigen-distance} there exists $\rho_* \in E_1([0,1])$ with curvature $\kappa_*$. By Theorem \ref{thm:irreducibility1} it is a proper metric. Since $(a+b)^\frac1p\leq a^\frac1p + b^\frac1p$ it follows that $\rho_*^\frac1p$ satisfies the triangle inequality and hence is a proper metric too. It is a $p$-Wasserstein eigendistance with curvature $1-(1-\kappa_*)^\frac1p$ since
\[ W_p^p(\rho_*^\frac1p)(x,y) = \inf_{\pi\in\cP_{x,y}}\int \rho_* d\pi = W_1(\rho_*) = (1-\kappa_*)\rho_*(x,y). \]
By Lemma \ref{lemma:uniqueness}, any other $p$-Wasserstein eigendistance is a scalar multiple of $\rho_*^frac1p$.
\end{proof}

\section{Proofs of concentration}

\begin{proof}[Proof of Theorem \ref{thm:exponential-moment}]
Fix $f$ and $T$. We will first prove that
\begin{align}\label{eq:concentration1}
&\bE\left[ e^{P_{T-t-1}f(X_{t+1})-P_{T-t}f(X_t)} \;\middle|\; \cF_t \right]	\\ \label{eq:concentration2}
&\leq 1+ \sum_{n=2}^\infty \frac{1 }{n!} (1-\kappa)^{(T-t-1)n}\norm{f}_{\rho-Lip}^n \sigma_\rho^{(n)},
\end{align}
with $\cF_t$ the canonical filtration. Writing $e^x=\sum_{n=0}^\infty \frac{x^n}{n!}$ we first observe that the term for $n=1$ vanishes in \eqref{eq:concentration1} via  an application of the Markov property. To match the terms for $n\geq2$ with the right hand side we use the estimate
\begin{align}
&\bE\left[ \left(P_{T-t-1}f(X_{t+1})-P_{T-t}f(X_t)\right)^n \;\middle|\; \cF_t \right]	\\
&\leq \sup_{x\in E} \int \left(P_{T-t-1}f(z)-P_{T-t}f(x)\right)^n P^x(dz).
\end{align}
Since $f$ is $\rho$-Lipschitz we can use Lemma \ref{lemma:Lipschitz-contraction} to obtain
\begin{align}
\int \left(P_{T-t-1}f(z)-P_{T-t}f(x)\right)^n P^x(dz)
\leq (1-\kappa)^{(T-t-1)n}\norm{f}_{\rho-Lip}^n \sigma_\rho^{(n)},
\end{align}
which shows \eqref{eq:concentration2}.

By telescoping, $f(X_T)-P_Tf(x) = \sum_{t=0}^{T-1}P_{T-t-1}f(X_{t+1})-P_{T-t}f(X_t)$, and with \eqref{eq:concentration2}
\begin{align}
\bE_x e^{f(X_T)-P_Tf(x)} 
& \leq \prod_{t=0}^{T-1} \left(1+ \sum_{n=2}^\infty \frac{1 }{n!} (1-\kappa)^{(T-t-1)n}\norm{f}_{\rho-Lip}^n \sigma_\rho^{(n)}\right)	\\
&\leq \exp\left( \sum_{n=2}^\infty \frac{1 }{n!} \sum_{t=0}^{T-1}(1-\kappa)^{(T-t-1)n}\norm{f}_{\rho-Lip}^n \sigma_\rho^{(n)}  \right).
\end{align}
The sum over $t$ is bounded by $(1-(1-\kappa)^n)^{-1}$ in the case $\kappa>0$, and by $T$ when $\kappa=0$.
\end{proof}

\begin{lemma}\label{lemma:concentration}
Assume a random variable $Z$ with $\bE Z =0$ satisfies
\begin{align}\label{eq:concentration-lemma}
\log\bE e^{\lambda Z}\leq \alpha \sum_{n=2}^\infty \frac{|\lambda|^n\beta^n}{n!}
\end{align}
for some $\alpha, \beta>0$ and all $\lambda \in \bR$. Then, for any $r\geq 0$,
\begin{align}\label{eq:concentration-lemma2}
\bP(\pm Z \geq r\beta) \leq \exp\left(\frac{-\frac12 r^2}{\alpha+\frac13 r}\right).
\end{align}
\end{lemma}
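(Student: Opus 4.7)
The plan is to apply the standard exponential Markov (Chernoff) argument and then optimise the free parameter, using a classical Bernstein-type estimate to tame the tail of the series in \eqref{eq:concentration-lemma}. Since the moment generating function bound only depends on $|\lambda|$, the two cases $\pm Z$ are symmetric, so it suffices to treat $+Z$ with $\lambda>0$.

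First, for $\lambda>0$ I would write
\begin{align}
\bP(Z\geq r\beta) \leq e^{-\lambda r\beta}\,\bE e^{\lambda Z}
\leq \exp\left(-\lambda r\beta + \alpha\sum_{n=2}^\infty \frac{(\lambda\beta)^n}{n!}\right).
\end{align}
The key elementary estimate I would then invoke is
\begin{align}
\sum_{n=2}^\infty \frac{x^n}{n!}\leq \frac{x^2/2}{1-x/3}\qquad \text{for } 0\leq x<3,
\end{align}
which follows from the inequality $n!\geq 2\cdot 3^{n-2}$ for $n\geq 2$ and summing the resulting geometric series. Applying this with $x=\lambda\beta$ (and insisting $\lambda\beta<3$, which will be automatic below) gives
\begin{align}
\bP(Z\geq r\beta) \leq \exp\left(-\lambda r\beta + \frac{\alpha(\lambda\beta)^2/2}{1-\lambda\beta/3}\right).
\end{align}

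Next I would optimise in $t:=\lambda\beta$. The natural Bernstein choice is $t^*=r/(\alpha+r/3)$, which always lies in $[0,3)$. A direct computation yields $1-t^*/3=\alpha/(\alpha+r/3)$, hence $rt^*=r^2/(\alpha+r/3)$ and $\alpha(t^*)^2/(2(1-t^*/3))=r^2/(2(\alpha+r/3))$, so that
\begin{align}
-rt^*+\frac{\alpha(t^*)^2/2}{1-t^*/3} = -\frac{r^2/2}{\alpha+r/3},
\end{align}
which is exactly the exponent in \eqref{eq:concentration-lemma2}. Applying the same argument to $-Z$, whose mgf satisfies the same bound, handles the lower tail. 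I expect no real obstacle: the only non-trivial ingredient is the Bernstein series estimate, which is standard, and the optimisation is a one-line calculation once the right candidate $t^*$ is chosen.
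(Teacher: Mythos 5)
Your proof is correct, and it reaches the Bernstein-type bound by a slightly different route than the paper. Both arguments share the Chernoff skeleton $\bP(\pm Z\geq r\beta)\leq \exp\bigl(-\lambda r\beta+\alpha\sum_{n\geq2}\frac{(\lambda\beta)^n}{n!}\bigr)$, but they diverge in how the series is handled: you first dominate the series by a geometric one via $n!\geq 2\cdot 3^{n-2}$, obtaining $\sum_{n\geq2}\frac{x^n}{n!}\leq \frac{x^2/2}{1-x/3}$ for $x<3$, after which the choice $t^*=\lambda^*\beta=r/(\alpha+r/3)$ (which indeed lies in $[0,3)$) gives the exponent $-\tfrac{r^2/2}{\alpha+r/3}$ by an exact algebraic computation. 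The paper instead optimizes the untruncated exponent $-\lambda r\beta+\alpha(e^{\lambda\beta}-1-\lambda\beta)$ exactly, arriving at the sharper Bennett-type quantity $r-(\alpha+r)\log(1+r/\alpha)$, and then shows this is dominated by $-\tfrac{r^2/2}{\alpha+r/3}$ through the inequality $\log(1+r/\alpha)\geq \bigl(\tfrac{r^2/2}{\alpha+r/3}+r\bigr)/(\alpha+r)$, verified by comparing derivatives in $r$. Your version is more elementary and self-contained, since the optimization is a one-line identity and no calculus comparison is needed; the paper's version passes through the stronger Bennett exponent before weakening it, which would matter only if one wanted to state the sharper intermediate bound. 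Either way the two tails are handled identically, since the hypothesis \eqref{eq:concentration-lemma} depends only on $\abs{\lambda}$, so your symmetry remark is fine.
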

\begin{proof}
For any $\lambda>0$, by the exponential Markov inequality and \eqref{eq:concentration-lemma}, 
\begin{align*}
\bP_x(\pm Z > r\beta ) 
&\leq e^{-\lambda r\beta} \bE_x e^{\pm \lambda Z}  
\leq \exp\left(-\lambda r\beta+ \alpha \sum_{n=2}^\infty\frac{\lambda^n\beta^n}{n!} \right)	.
\end{align*}
Through optimizing $\lambda$, the exponent becomes
\begin{align}\label{eq:optimized-exponent}
r-\left(\alpha+r\right)\log\left(\frac{r}{\alpha}+1\right).
\end{align} 

To show $\eqref{eq:optimized-exponent}\leq \frac{-\frac12 r^2}{\alpha+\frac13 r}$ is equivalent to showing
\[ \log\left(\frac{r}{\alpha}+1\right) \geq \frac{\frac{\frac12 r^2}{\alpha+\frac13r}+r}{\alpha+r}. \]
Through comparing the derivatives w.r.t. $r$ we conclude that the left hand side is indeed bigger than the right hand side, which proves the lemma.
\end{proof}

\begin{proof}[Proof of Corollary \ref{corollary:concentration}]
Write $\beta=\norm{f}_{\rho-Lip}2J_\rho$ and $\alpha=T$ or $\alpha=(\kappa(2-\kappa))^{-1}=(1-(1-\kappa)^2)^{-1}$, depending on $\kappa=0$ or $\kappa>0$. By Theorem \ref{thm:exponential-moment} and the remark following it the conditions of Lemma \ref{lemma:concentration} are satisfied, and
\begin{align*}
\bP_x(f(X_T)-\bE_x f(X_T) > r\beta ) \leq \exp\left( \frac{-\frac12 r^2}{\alpha+\frac13 r} \right).
\end{align*}
In the case $\kappa>0$ this is the claim, for $\kappa=0$ use $r=2T^{\frac12}\tilde{r}$.
\end{proof}

\begin{proof}[Proof of Theorem \ref{thm:distance-exponential-moment}]
In a first step we will show that
\begin{align}\label{eq:dist-exp-moment-1}
\hE_{x,y}e^{\lambda(\rho(X_1,Y_1)-\hPP\rho(x,y)}
&\leq 1+ 2\sum_{n=2}^\infty\frac{|2\lambda|^n\sigma^{(n)}}{n!} .
\end{align}
Writing the exponential as a series, the term for $n=1$ vanishes. For $n\geq 2$, 
\begin{align}
&\abs{\hE_{x,y}\left(\rho(X_1,Y_1)-\hPP\rho(x,y) \right)^n} 		\\
&\leq \hE_{x,y}\left(\int \abs{\rho(X_1,Y_1)-\rho(u,v)}\;\hPP^{x,y}(d(u,v)) \right)^n		\\
&\leq \hE_{x,y}\left(\int \rho(X_1,u)+\rho(Y_1,v)\;\hPP^{x,y}(d(u,v)) \right)^n		\\
&\leq 2^n\hE_{x,y}\left(\int \rho(X_1,u)\;\hPP^{x,y}(d(u,v)) \right)^n + 2^n\hE_{x,y}\left(\int \rho(Y_1,v)\;\hPP^{x,y}(d(u,v)) \right)^n		\\
&= 2^n\bE_{x}\left(\int \rho(X_1,u)\;P^{x}(du)) \right)^n + 2^n\hE_{y}\left(\int \rho(Y_1,v)\;P^{y}(dv)) \right)^n 
\leq 2^{n+1}\sigma^{(n)}.	\label{eq:rho-moment}
\end{align}

In a second step, we use the Markov property,  \eqref{eq:dist-exp-moment-1} and the fact that $\rho$ is an eigenfunction of $\hPP$ to eigenvalue $1-\kappa$:
\begin{align}
&\hE_{x,y}e^{\lambda(\rho(X_T,Y_T) - \hPP^{T}\rho(x,y))}		\\
&= \hE_{x,y}\left(\hE_{X_{T-1},Y_{T-1}} \left[e^{\lambda(\rho(X_1,Y_1) - \hPP\rho(X_{T-1},Y_{T-1}))}\right]
e^{\lambda(\hPP\rho(X_{T-1},Y_{T-1}) - \lambda\hPP^{T}\rho(x,y)}\right)	\\
&\leq \left(1+ 2\sum_{n=2}^\infty\frac{|2\lambda|^n\sigma^{(n)}}{n!}\right)
\hE_{x,y}e^{\lambda(\hPP\rho(X_{T-1},Y_{T-1}) - \hPP^{T}\rho(x,y))}	\\
&= \left(1+ 2\sum_{n=2}^\infty\frac{|2\lambda|^n\sigma^{(n)}}{n!}\right)
\hE_{x,y}e^{\lambda(1-\kappa) (\rho(X_{T-1},Y_{T-1}) - \hPP^{T-1}\rho(x,y))}.
\end{align}
Iterating this estimate, we obtain
\begin{align}
&\hE_{x,y}e^{\lambda(\rho(X_T,Y_T) - \hPP^{T}\rho(x,y))}
\leq \prod_{t=0}^{T-1}\left(1+ 2\sum_{n=2}^\infty 
\frac{|2\lambda(1-\kappa)^t|^n\sigma^{(n)}}{n!}\right)	\\
&\leq \exp\left( 2\sum_{t=0}^{T-1}\sum_{n=2}^\infty 
\frac{|2\lambda(1-\kappa)^t|^n\sigma^{(n)}}{n!} \right) .	
\end{align}
The claim follows by summing $(1-\kappa)^{nt}$ over $t$.
\end{proof}

\begin{proof}[Proof of Corollary \ref{corollary:distance-concentration}]
Write $\beta=4J_\rho$ and $\alpha=2T$ or $\alpha=2(\kappa(2-\kappa))^{-1}$, depending on $\kappa=0$ or $\kappa>0$. By Theorem \ref{thm:distance-exponential-moment} and $\sigma_\rho^{(n)}\leq (2J_\rho)^n$
the conditions of Lemma \ref{lemma:concentration} are satisfied, and
\begin{align*}
\hP_{x,y}( \pm (\rho(X_T,Y_T)-(1-\kappa)^T\rho(x,y)) > r\beta ) \leq \exp\left( \frac{-\frac12 r^2}{\alpha+\frac13 r} \right).
\end{align*}
Consequently, 
\begin{align*}
\hP_{x,y}\left(\abs{\rho(X_T,Y_T)-(1-\kappa)^T\rho(x,y)} > r\beta \right) \leq 2 \exp\left( \frac{-\frac12 r^2}{\alpha+\frac13 r} \right),
\end{align*}
and the claim follows for $r'=4r$.
\end{proof}

\appendix

\section{Multi-valued maps and measurable selections}
In this section we look at a few basic facts about multi-maps or correspondences, limited to the context of this article. For a more general overview see \cite{ALIPRANTIS:BORDER:99}, chapters 16 and 17.
\begin{definition}
Let $F_1, F_2$ be Polish spaces. 
A multi-map or correspondence $\phi:F_1 \twoheadrightarrow F_2$ is a map from $F_1$ with values in the power set of $F_2$.

The upper inverse of $\phi$ of $F_1\subset F_2$ is given by
\begin{align}
\phi^u(A):=\{x\in F_1 : \phi(x)\subset A\}.
\end{align}

The correspondence $\phi$ is called weakly measurable if $\phi^u(A)$ is measurable for all closed $A\subset F_2$.
\end{definition}

\begin{theorem}[Kuratowski-Ryll-Nardzewski Selection Theorem]\label{thm:selection}
Let $F_1,F_2$ be Polish spaces and $\phi: F_1\twoheadrightarrow F_2$ be weakly measurable with non-empty closed vales. Then there exists $f:F_1\to F_2$ measurable with $f(x)\in \phi(x)$ for all $x\in F_2$.
\end{theorem}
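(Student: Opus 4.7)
The plan is to apply the standard successive-approximation construction for measurable selections. Fix a complete compatible metric $d$ on the Polish space $F_2$ and a countable dense subset $\{y_k\}_{k\in\bN}$. The goal is to build a sequence of measurable functions $f_n:F_1\to F_2$, each taking values in $\{y_k\}$, such that $f_n(x)$ lies within distance $2^{-n}$ of $\phi(x)$ for every $x$ and such that $(f_n(x))_n$ is uniformly Cauchy; then $f(x):=\lim_n f_n(x)$ will be measurable as a pointwise limit of measurable functions, and because $\phi(x)$ is closed, $f(x)\in\phi(x)$.

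The first step is to record the working form of weak measurability: for every open $U\subset F_2$, the set $\{x: \phi(x)\cap U\neq\emptyset\}$ is measurable, since it equals $F_1\setminus \phi^u(F_2\setminus U)$ and $F_2\setminus U$ is closed. Consequently, for each $n,k\in\bN$ the sets
\[ V_{n,k}:=\{x\in F_1 : \phi(x)\cap B(y_k, 2^{-n})\neq\emptyset\} \]
are measurable, and density of $\{y_k\}$ together with $\phi(x)\neq\emptyset$ gives $\bigcup_k V_{n,k}=F_1$ for every $n$.

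Second, I would perform the induction. Define $f_0(x):=y_{k_0(x)}$ where $k_0(x)$ is the smallest $k$ with $x\in V_{0,k}$; this is measurable because each $V_{0,k}$ is. Given $f_n$ measurable with $f_n(x)=y_k$ only if $x\in V_{n,k}$, define $f_{n+1}(x):=y_{k(x)}$ where $k(x)$ is the smallest $k$ satisfying both $x\in V_{n+1,k}$ and $d(y_k, f_n(x))<2^{-n}+2^{-(n+1)}$. Such a $k$ exists: if $z\in\phi(x)$ satisfies $d(z,f_n(x))<2^{-n}$, density furnishes some $y_k$ with $d(y_k,z)<2^{-(n+1)}$, and this $y_k$ meets both conditions. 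Measurability of $f_{n+1}$ follows because on each of the countably many level sets $\{f_n=y_j\}$ the selection reduces to taking the least-index membership in a countable family of measurable sets.

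Finally, by construction $d(f_{n+1}(x),f_n(x))<3\cdot 2^{-(n+1)}$ uniformly in $x$, so $(f_n(x))$ is Cauchy in the complete space $F_2$; the pointwise limit $f$ is measurable, and since $d(f_n(x),\phi(x))<2^{-n}\to 0$ and $\phi(x)$ is closed, $f(x)\in\phi(x)$. The main obstacle is the $\epsilon$-bookkeeping at the inductive step: one must balance the two conditions (approximation of $\phi(x)$ and proximity to $f_n$) so that a valid $y_k$ is always available while keeping the increments summable. The radii $2^{-n}$ and $2^{-(n+1)}$ are tight enough for Cauchy convergence and loose enough that density plus the triangle inequality always produces an admissible $y_k$.
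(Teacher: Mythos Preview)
Your argument is the standard successive-approximation proof of the Kuratowski--Ryll-Nardzewski theorem and is correct as written: the inductive hypothesis $d(f_n(x),\phi(x))<2^{-n}$ is preserved, the existence of an admissible $y_k$ at each step follows from density plus the triangle inequality exactly as you check, and the increments $d(f_{n+1}(x),f_n(x))<3\cdot 2^{-(n+1)}$ are summable, so the pointwise limit is measurable and lands in the closed set $\phi(x)$.

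However, there is nothing to compare against: the paper does not prove this theorem. It is recorded in the appendix as a classical result and simply cited (Aliprantis--Border, Theorem~17.13); the paper only \emph{uses} it, in the proofs of Theorem~\ref{thm:markov-coupling} and Theorem~\ref{thm:irreducibility1}, to extract a measurable coupling operator from a weakly measurable correspondence of optimal couplings. So your proposal supplies a proof where the paper deliberately outsources one.
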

See for example \cite{ALIPRANTIS:BORDER:99}, Theorem 17.13. for a reference.

\bibliography{BibCollection}{}
\bibliographystyle{plain}

\end{document}